\newcommand{\dom}{\mathrm{dom}}
\newcommand{\D}{D}
\newtheorem{theorem}{Theorem}[section]
\newtheorem{corollary}[theorem]{Corollary}
\newtheorem{lemma}[theorem]{Lemma}
\newtheorem{proposition}[theorem]{Proposition}
\newtheorem{remark}[theorem]{Remark}
\numberwithin{equation}{section}
\begin{document}
\title{The Fragmentation Equation with Size Diffusion: Well-Posedness and Long-Term Behavior} 
\thanks{Partially supported by Deutscher Akademischer Austauschdienst funding programme \textsl{Research Stays for University Academics and Scientists, 2021} (57552334)}
%
\author{Philippe Lauren\c{c}ot}
\address{Institut de Math\'ematiques de Toulouse, UMR~5219, Universit\'e de Toulouse, CNRS \\ F--31062 Toulouse Cedex 9, France}
\email{laurenco@math.univ-toulouse.fr}
\author{Christoph Walker}
\address{Leibniz Universit\"at Hannover\\ Institut f\"ur Angewandte Mathematik \\ Welfengarten 1 \\ D--30167 Hannover\\ Germany}
\email{walker@ifam.uni-hannover.de}
%
\keywords{fragmentation - size diffusion - well-posedness - convergence - semigroup - perturbation}
\subjclass{45K05 - 47D06 - 47B65 - 47N50 - 35B40}
\date{\today}
%
\begin{abstract}
The dynamics of the fragmentation equation with size diffusion is investigated when the size ranges in $(0,\infty)$. The associated linear operator involves three terms and can be seen as a nonlocal perturbation of a Schr\"odinger operator. A Miyadera perturbation argument is used to prove that it is the generator of a positive, analytic semigroup on a weighted $L_1$-space. Moreover, if the overall fragmentation rate does not vanish at infinity, then there is a unique stationary solution with given mass. Assuming further that the overall fragmentation rate diverges to infinity for large sizes implies the immediate compactness of the semigroup and that it eventually stabilizes at an exponential rate to a one-dimensional projection carrying the information of the mass of the initial value. 
\end{abstract}
%
\maketitle
%
%
\pagestyle{myheadings}
\markboth{\sc{Ph.~Lauren\c{c}ot \& Ch.~Walker}}{\sc{Fragmentation-diffusion equation}}
%
%
\section{Introduction}\label{S.1}

The well-posedness of, along with the long-term behavior of solutions to, the fragmentation equation with size diffusion 
\begin{subequations}\label{FD.0}
\begin{align}
\partial_t \phi(t,x) - D \partial_x^2 \phi(t,x) & = - a(x) \phi(t,x) + \int_x^\infty a(y) b(x,y) \phi(t,y)\ \mathrm{d}y\,, \qquad (t,x)\in (0,\infty)^2\,, \label{FD.1} \\
\phi(t,0) & = 0\,, \qquad t>0\,, \label{FD.2} \\
\phi(0,x) & = f(x)\,, \qquad x\in (0,\infty)\,. \label{FD.3} 
\end{align}
\end{subequations}
is investigated by a semigroup approach. In \eqref{FD.0}, $\phi=\phi(t,x)\ge 0$ denotes the size distribution function of particles of size $x\in (0,\infty)$ at time $t>0$, while $a(x)\ge 0$ is the overall fragmentation rate of particles of size $x$, and $b(x,y)$ is the daughter distribution function which describes the distribution of fragments resulting from the breakup of a particle of size $y$. Besides undergoing fragmentation events, particles are also assumed to modify their size by diffusion at a constant diffusion rate $D>0$. Finally, nucleation is not taken into account in this model which leads to the homogeneous boundary condition \eqref{FD.2} at $x=0$. 

An interplay between diffusion and fragmentation as depicted by \eqref{FD.0} is met in the growth of ice crystals, see \cite{FJMOS2003, MFJLODS2004}. Indeed, on the one hand, ice crystals grow or shrink in a way which looks like diffusion and break apart due to internal stresses, the latter process being referred to as \textit{polygonization} or \textit{rotation recrystallization} in ice physics. The fragmentation equation with size diffusion \eqref{FD.0} is also derived in \cite{FHL1994} to describe the growth of microtubules. In the absence of diffusion (i.e. $D=0$), \Cref{FD.0} is the spontaneous fragmentation equation which has a long and rich history and has been extensively studied in the mathematical and physical literature since the pioneering works of \cite{Fil1961, McZi1987, ZiMc1985}, see \cite{BaAr2006, BLL2020a, BLL2020b, Ber2006, Ban2006}, and the references therein.

An important role is played in the dynamics by the total mass of the particles' distribution 
\begin{equation*}
M_1(\phi(t)) := \int_0^\infty x \phi(t,x)\ \mathrm{d}x\,, \qquad t\ge 0\,,
\end{equation*}
which is expected to be conserved throughout time evolution when there is no loss of matter during fragmentation events; that is, when $b$ satisfies
\begin{equation*}
	\int_0^y x b(x,y)\ \mathrm{d}x = y\,, \qquad y\in (0,\infty)\,. 
\end{equation*}
Thus, $X_1 := L_1((0,\infty),x\mathrm{d}x)$ is a natural functional framework for the study of the fragmentation operator. We further observe that the homogeneous Dirichlet boundary condition \eqref{FD.2} corresponds actually to a no-flux boundary condition for the Laplace operator in $X_1$, so that this space turns out to be also well-suited for diffusion. However, the analysis already performed on the fragmentation equation without diffusion reveals that a complete scale of weighted $L_1$-spaces is needed besides $X_1$. In this regard, we introduce the spaces 
$$
X_m := L_1((0,\infty),x^m\mathrm{d}x)\ \;\;\text{ and }\;\;  X_{1,m} := X_1\cap X_m
$$ 
for $m\in\mathbb{R}$. We denote the positive cone of $X_{1,m}$ by $X_{1,m}^+$. For $f\in X_m$ and $m\in\mathbb{R}$, we also define the moment $M_m(f)$ of order $m$ of $f$ by
\begin{equation*}
M_m(f) := \int_0^\infty x^m\ f(x)\ \mathrm{d}x\,,
\end{equation*}
so that $\|f\|_{X_m}=M_m(|f|)$. For definiteness, we equip $X_{1,m}$ with the norm $\|\cdot\|_{X_{1,m}} := \|\cdot\|_{X_1} + \|\cdot\|_{X_m}$ and note that $X_1 \doteq X_{1,1}$.

\medskip

Our strategy to study the well-posedness and the long-term behavior of \eqref{FD.0} is to write it as an abstract Cauchy problem in $X_{1,m}$ for $m\ge 1$ and show that the corresponding operator generates a semigroup with properties depending on $m$, $a$, and $b$. To this end, we assume throughout the paper that
\begin{equation}
a\in L_{\infty,loc}([0,\infty))\,, \qquad a\ge 0 \;\text{ a.e. in }\; (0,\infty)\,, \label{A.0}
\end{equation}
and that the daughter distribution function $b$ is a nonnegative measurable function on $(0,\infty)^2$ satisfying
\begin{equation}
	\int_0^y x b(x,y)\ \mathrm{d}x = y\,, \qquad y\in (0,\infty)\,. \label{B.0}
\end{equation}
Moreover, the diffusion rate $D$ is normalized to $D=1$.

For $m\ge 1$ we then define the (Schr\"odinger) operator $A_{a,m}$ on $X_{1,m}$ by
\begin{equation}
	\begin{split}
		\dom(A_{a,m}) & := \{ f\in X_{1,m}\ :\ f''\in X_{1,m}\,, \ af \in X_{1,m}\,, \ f(0)=0\}\,, \\
		A_{a,m}f & := f'' - a f\,, \qquad f\in \dom(A_{a,m})\,, 
	\end{split} \label{A.1}
\end{equation}	
as well as the nonlocal operator $B_{m}$ on $X_{1,m}$ by 
\begin{equation}
	\begin{split}
		\dom(B_m) & := \{ f\in X_{1,m}\ : \ af \in X_{1,m}\}\subset \dom(A_{a,m})\,, \\
		B_mf(x) & := \int_x^\infty a(y) b(x,y) f(y)\ \mathrm{d}y\,, \quad x\in (0,\infty)\,, \qquad f\in \dom(B_m)\,.
	\end{split}\label{B.1}
\end{equation}
Owing to \eqref{B.0} the operator $B_m$ turns out to be well-defined, see \Cref{L.B.1}. Setting
\begin{equation}
\mathbb{A}_m := A_{a,m}+B_m \;\text{ with }\; \dom(\mathbb{A}_m) := \dom(A_{a,m})\,, \label{AB.1}
\end{equation}
\Cref{FD.0} can be equivalently formulated as the Cauchy problem 
\begin{equation}\label{CP}
\frac{\mathrm{d}}{\mathrm{d}t} \phi = \mathbb{A}_m \phi\,, \quad t>0\,, \qquad \phi(0)=f\,,
\end{equation}
in $X_{1,m}$, and we shall investigate generation properties of the operator $\mathbb{A}_m$.

For concise statements we introduce the following notation: Given $\kappa\ge 1$ and $\omega\in\mathbb{R}$, we write $A\in \mathcal{G}(X_{1,m},\kappa,\omega)$ if the (unbounded) linear operator $A$ on $X_{1,m}$ is the generator of a strongly continuous semigroup $(e^{tA})_{t\ge 0}$ on $X_{1,m}$ and
$$
\|e^{tA}\|_{\mathcal{L}(X_{1,m})}\le \kappa e^{\omega t}\,,\qquad t\ge 0\,.
$$
We set
$$
\mathcal{G}(X_{1,m}):=\bigcup_{\kappa\ge 1\,,\, \omega\in\mathbb{R}}\mathcal{G}(X_{1,m},\kappa,\omega)\,.
$$
Moreover, we write $A\in \mathcal{G}_+(X_{1,m})$ if the semigroup $(e^{tA})_{t\ge 0}$ is positive  on the Banach lattice $X_{1,m}$. We denote the domain of the (unbounded) operator $A$ in $X_{1,m}$ by $\dom (A)$ and set
$$
D(A):=\big(\dom(A),\|\cdot\|_A\big)\,,
$$ 
where $\|f\|_A:=\|f\|_{X_{1,m}} + \|Af\|_{X_{1,m}}$ for $f\in \dom(A)$ is the graph norm. Finally, we write $A\in \mathcal{H}(X_{1,m})$ if $A\in \mathcal{G}(X_{1,m})$ and  the semigroup $(e^{tA})_{t\ge 0}$ is analytic.

\medskip

With this notation we may formulate the generation result in $X_{1,m}$ for $m\ge 1$:

\begin{theorem}\label{T.1}
Assume that $a$ and $b$ satisfy \eqref{A.0} and \eqref{B.0}.
\begin{itemize}
\item [(a)] There is an extension $\tilde{\mathbb{A}}_1\in \mathcal{G}_+(X_1,1,0)$ of $\mathbb{A}_1$. 

\item [(b)] Assume further that there is $\delta_2\in (0,1)$ such that
\begin{equation}
	(1-\delta_2) y^2 \ge \int_0^y x^2 b(x,y)\ \mathrm{d}x\,, \qquad y\in (0,\infty)\,. \label{B.10}
\end{equation}
If $m>1$, then $\mathbb{A}_m\in \mathcal{G}_+(X_{1,m})\cap \mathcal{H}(X_{1,m})$. In addition, for all $f\in X_{1,m}$,
\begin{equation}
M_1\left(e^{t\mathbb{A}_m}f \right) = M_1(f)\,, \qquad t\ge 0\,. \label{M.100}
\end{equation}

\item [(c)] Assume that $a$ satisfies 
\begin{equation}
	\lim_{x\to\infty} a(x) = \infty \label{C.0}
\end{equation}
and that $b$ satisfies \eqref{B.10}. Then, $(e^{t\mathbb{A}_m})_{t\ge 0}$ is immediately compact on $X_{1,m}$ for $m>1$.
\end{itemize}
\end{theorem}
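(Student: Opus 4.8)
The plan is to prove Theorem~\ref{T.1}(c) by establishing that the semigroup $(e^{t\mathbb{A}_m})_{t\ge 0}$ maps bounded subsets of $X_{1,m}$ into relatively compact subsets for every $t>0$. Since by part~(b) the semigroup is analytic, it is immediately differentiable, so $e^{t\mathbb{A}_m}$ maps $X_{1,m}$ into $\dom(\mathbb{A}_m)=\dom(A_{a,m})$ and, by the analyticity estimates, $\|\mathbb{A}_m e^{t\mathbb{A}_m}\|_{\mathcal{L}(X_{1,m})}\le C(t)$ for $t>0$. Consequently $e^{t\mathbb{A}_m}$ is, for each fixed $t>0$, a bounded operator from $X_{1,m}$ into $D(\mathbb{A}_m)$, and it suffices to show that the embedding $D(\mathbb{A}_m)=D(A_{a,m})\hookrightarrow X_{1,m}$ is compact. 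Here we use $\dom(\mathbb{A}_m)=\dom(A_{a,m})$ together with the fact that $B_m$ is $A_{a,m}$-bounded (indeed bounded on $\dom(B_m)\supset\dom(A_{a,m})$ in the relevant norm, cf.\ \eqref{B.1} and \Cref{L.B.1}), so that the graph norm of $\mathbb{A}_m$ is equivalent to that of $A_{a,m}$.

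The heart of the matter is therefore the compactness of $D(A_{a,m})\hookrightarrow X_{1,m}$. A set $F$ bounded in $D(A_{a,m})$ satisfies: $F$ is bounded in $X_{1,m}$; $\{f'' : f\in F\}$ is bounded in $X_{1,m}$; and $\{af : f\in F\}$ is bounded in $X_{1,m}$. I would verify compactness in $L_1((0,\infty),x\,\mathrm{d}x)$ and in $L_1((0,\infty),x^m\,\mathrm{d}x)$ separately (equivalently, in $L_1$ against the weight $x+x^m$) using a vanishing-at-the-ends-plus-equicontinuity criterion (a weighted Fréchet–Kolmogorov / Vitali-type argument). Tightness at $x\to 0$ follows from the boundary condition $f(0)=0$ together with control of $f'$: writing $f(x)=\int_0^x f'(s)\,\mathrm{d}s$ and using that $f,f''\in X_{1,m}$ with $f(0)=0$ forces, via integration and interpolation, a uniform bound on $f$ near $0$ that makes $\int_0^\varepsilon (x+x^m)|f(x)|\,\mathrm{d}x$ small uniformly over $F$. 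Tightness at $x\to\infty$ is where \eqref{C.0} enters decisively: from $\lim_{x\to\infty}a(x)=\infty$, for any $\eta>0$ there is $R_\eta$ with $a(x)\ge 1/\eta$ for $x\ge R_\eta$, whence $\int_{R_\eta}^\infty (x+x^m)|f(x)|\,\mathrm{d}x\le \eta\int_{R_\eta}^\infty (x+x^m) a(x)|f(x)|\,\mathrm{d}x\le \eta\,\|af\|_{X_{1,m}}\le \eta\,C$, uniformly over $F$. Finally, on a fixed compact interval $[\varepsilon,R]$, the bounds on $f$ and $f''$ in $L_1$ give bounds on $f$ and $f'$ in, say, $W^{1,1}(\varepsilon,R)$ and hence $W^{2,1}(\varepsilon,R)$, and the compact embedding $W^{2,1}(\varepsilon,R)\hookrightarrow C([\varepsilon,R])\hookrightarrow L_1(\varepsilon,R)$ (Rellich–Kondrachov in one dimension) yields relative compactness of $\{f|_{[\varepsilon,R]} : f\in F\}$ in $L_1((\varepsilon,R),(x+x^m)\,\mathrm{d}x)$. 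Combining the three estimates via a standard diagonal/$\varepsilon$-net argument gives relative compactness of $F$ in $X_{1,m}$.

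Putting these pieces together: fix $t>0$. Then $e^{t\mathbb{A}_m}$ maps the closed unit ball of $X_{1,m}$ into a bounded subset of $D(A_{a,m})$ (using analyticity of the semigroup and the equivalence of graph norms), and the latter is relatively compact in $X_{1,m}$ by the preceding paragraph; hence $e^{t\mathbb{A}_m}$ is compact. Since $t>0$ was arbitrary, $(e^{t\mathbb{A}_m})_{t\ge 0}$ is immediately compact, which is the assertion of part~(c).

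The main obstacle I anticipate is the tightness estimate at $x\to 0$: the weight $x+x^m$ degenerates there, and one must combine the Dirichlet condition $f(0)=0$ with the $L_1$-bound on $f''$ to obtain a uniform modulus of smallness for $\int_0^\varepsilon(x+x^m)|f(x)|\,\mathrm{d}x$ without any pointwise bound on $a$ near $0$; an interpolation inequality of the form $\|f\|_{L_\infty(0,1)}\le C(\|f\|_{X_{1,m}}+\|f''\|_{X_{1,m}})$ for $f$ with $f(0)=0$, or a direct estimate on $\int_0^x|f'|$, should close this. The tightness at infinity, by contrast, is essentially immediate from \eqref{C.0}, and the interior compactness is classical.
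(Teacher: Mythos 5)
Your proposal is correct and follows essentially the same route as the paper: the key step is the compactness of the embedding $D(A_{a,m})\doteq D(\mathbb{A}_m)\hookrightarrow X_{1,m}$, for which the hypothesis \eqref{C.0} supplies tightness at infinity via $\|af\|_{X_{1,m}}$, and which is then combined with analyticity to deduce immediate compactness. The small differences are purely cosmetic --- the paper infers compactness of the resolvent and then invokes \cite[Theorem~2.3.3]{Paz1983} (rather than factoring $e^{t\mathbb{A}_m}$ directly through $D(\mathbb{A}_m)$), and establishes the compact embedding via Arzel\`a--Ascoli plus Dunford--Pettis/Vitali (rather than a weighted Fr\'echet--Kolmogorov argument with Rellich--Kondrachov on compacta) --- and the tightness at $x\to 0$ that you flag as the main anticipated obstacle is in fact immediate, since \Cref{L.2.1} already furnishes exactly the uniform bound $\|f\|_{L_\infty(0,\infty)}\le\|f''\|_{X_1}$ you conjecture, so that $\int_0^\varepsilon(x+x^m)|f|\,\mathrm{d}x$ is controlled by the vanishing of the weight at the origin.
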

 
Assumption~\eqref{B.10} is commonly encountered in the investigation of the fragmentation equation and somehow excludes the concentration of $b$ along the diagonal.

\medskip

At this stage, the extension $\tilde{\mathbb{A}}_1$ of $\mathbb{A}_1$ is not completely identified. In particular, we do not know whether or not $\tilde{\mathbb{A}}_1$ coincides with $\mathbb{A}_1$. Still, it is a question worth of investigation and we refer to \cite{BLL2020a} for a thorough discussion of this issue for the fragmentation equation without diffusion. Anyway, a positive answer is straightforward when $a\in L_\infty(0,\infty)$ and reported in the next result.

\begin{proposition}\label{P.10}
Assume that $a\in L_\infty(0,\infty)$ is non-negative and that $b$ satisfies \eqref{B.0}. 
\begin{itemize}
\item [(a)] For $m\ge 1$, $\mathbb{A}_m\in \mathcal{G}_+(X_{1,m})\cap \mathcal{H}(X_{1,m})$ and \eqref{M.100} is satisfied. In addition, $\mathbb{A}_1\in \mathcal{G}_+(X_1,1,0)$.
\item[(b)] The semigroup $(e^{t\mathbb{A}_m})_{t\ge 0}$ is not compact on $X_{1,m}$.
\end{itemize}
\end{proposition}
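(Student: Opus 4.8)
The plan is to treat $\mathbb{A}_m$ as a bounded perturbation of $A_{a,m}$ when $a\in L_\infty(0,\infty)$, which is the decisive simplification compared with the general situation of \Cref{T.1}. For part (a), I would first observe that under the boundedness assumption $a\in L_\infty(0,\infty)$ the potential term $f\mapsto af$ is a bounded operator on $X_{1,m}$, so that $A_{a,m}$ is a bounded perturbation of the Dirichlet Laplacian $f\mapsto f''$ on $X_{1,m}$ (with domain $\{f\in X_{1,m}: f''\in X_{1,m},\ f(0)=0\}$); since the latter generates a positive analytic contraction semigroup on $X_{1,m}$ (this should be available from the analysis of the diffusion part, with the Dirichlet condition at $0$ being a no-flux condition in these weighted spaces), we get $A_{a,m}\in\mathcal{H}(X_{1,m})$ at once, and positivity of the perturbed semigroup follows because the multiplication operator $-a$ is ``resolvent-positive'' relative to the generator (e.g. via the Trotter product formula or the Dyson--Phillips series, each term of which is positive). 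Next I would note that when $a\in L_\infty(0,\infty)$, the domain condition $af\in X_{1,m}$ in \eqref{B.1} is automatically satisfied by every $f\in X_{1,m}$, so $\dom(B_m)=X_{1,m}$; combined with \Cref{L.B.1} (which gives well-definedness and presumably an estimate of the form $\|B_mf\|_{X_{1,m}}\le C\|af\|_{X_{1,m}}\le C\|a\|_\infty\|f\|_{X_{1,m}}$ using \eqref{B.0} for the $X_1$-part and a crude bound for the $X_m$-part), $B_m\in\mathcal{L}(X_{1,m})$. Therefore $\mathbb{A}_m=A_{a,m}+B_m$ is a bounded perturbation of a generator of a positive analytic semigroup, hence $\mathbb{A}_m\in\mathcal{H}(X_{1,m})$; positivity of $(e^{t\mathbb{A}_m})_{t\ge 0}$ again follows from the Dyson--Phillips expansion since $B_m$ maps the positive cone into itself and each term in the series is then positive. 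The mass conservation \eqref{M.100} I would obtain by testing the Cauchy problem \eqref{CP} with the weight $x$: the contribution of $f''$ integrates to zero because of the boundary condition $f(0)=0$ and the decay forced by $f''\in X_1$, while the fragmentation terms cancel precisely by \eqref{B.0}; this gives $\frac{d}{dt}M_1(e^{t\mathbb{A}_m}f)=0$ first for $f$ in the domain and then for all $f\in X_{1,m}$ by density and continuity. The statement $\mathbb{A}_1\in\mathcal{G}_+(X_1,1,0)$ in the case $m=1$ is the borderline case: here I would argue that the Dirichlet Laplacian is a contraction semigroup on $X_1$ and that the full operator $\mathbb{A}_1$, although not analytic-friendly in the same clean way, still generates a positive contraction semigroup, which can be checked by a dissipativity estimate in $X_1$ — testing with $\mathrm{sign}(f)\,x$ one finds the $f''$ term contributes a nonpositive quantity (after the boundary terms are controlled) and the fragmentation loss/gain terms cancel in absolute value up to the sign, yielding $\|\mathbb{A}_1 f\|$-type dissipativity, so Lumer--Phillips applies with $\kappa=1$, $\omega=0$. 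Alternatively, and more robustly, this case is contained in \Cref{T.1}(a) specialized to bounded $a$, with the extension being honest because boundedness of $a$ makes $\dom(A_{a,1})$ the natural maximal domain.

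For part (b), the point is that even with the smoothing diffusion, the semigroup fails to be compact because the weighted $L_1$-structure on the unbounded size interval $(0,\infty)$ is not compactly embedded in itself and the diffusion provides no decay at spatial infinity when $a$ is merely bounded. I would make this precise by exhibiting a bounded sequence $(f_k)_{k\ge1}\subset X_{1,m}$ whose image $(e^{t\mathbb{A}_m}f_k)$ at some fixed $t=t_0>0$ has no convergent subsequence. The natural choice is a sequence of bumps escaping to infinity: set $f_k(x):=\lambda_k\,\chi_{(k,k+1)}(x)$ with $\lambda_k$ chosen so that $\|f_k\|_{X_{1,m}}=1$, i.e. $\lambda_k\sim k^{-m}$. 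Since $a\in L_\infty$, one has a lower bound $e^{t\mathbb{A}_m}\ge e^{-\|a\|_\infty t}\,e^{tA_0}$ in the sense of positive operators, where $(e^{tA_0})_{t\ge0}$ is the Dirichlet-Laplacian semigroup; moreover $e^{t_0 A_0}f_k$ is (up to the boundary correction near $0$, which is negligible for large $k$) essentially a translate of the fixed heat profile $e^{t_0\partial_x^2}(\lambda_k\chi_{(0,1)})$ centered near $x=k$, so its $X_{1,m}$-norm stays bounded below by a positive constant while its ``center of mass'' tends to infinity; two such bumps centered near $x=k$ and $x=j$ with $k\neq j$ large are almost disjointly supported after weighting, hence stay a fixed distance apart in $X_{1,m}$. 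This yields a bounded sequence with images at time $t_0$ that form a $c$-separated set for some $c>0$, contradicting relative compactness, and since $t_0>0$ was arbitrary the semigroup is not eventually (hence not immediately) compact either. The mild technical care needed is (i) to verify that the lower heat-kernel bound survives the nonlocal positive term $B_m$ — which is immediate from positivity of $B_m$ and the Dyson--Phillips series, since dropping $B_m$ only decreases the semigroup — and (ii) to handle the Dirichlet boundary at $x=0$, which only affects a region of fixed size near the origin and is therefore irrelevant for bumps escaping to infinity.

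I expect the main obstacle to be the borderline endpoint $m=1$ in part (a): the clean ``bounded perturbation of analytic generator'' argument gives analyticity and positivity for $m>1$ without pain, but establishing $\mathbb{A}_1\in\mathcal{G}_+(X_1,1,0)$ with the sharp constants $\kappa=1$ and $\omega=0$ requires a genuine dissipativity computation in the non-reflexive space $X_1$, where one must carefully justify the vanishing of boundary terms at $x=0$ and at $x=\infty$ when integrating $f''$ against the (generalized) signed weight, and then invoke Lumer--Phillips (together with density of the domain and a range condition for some $\lambda>0$). If one instead wishes to deduce this directly from \Cref{T.1}(a), the remaining work is to show that for bounded $a$ the extension $\tilde{\mathbb{A}}_1$ is actually equal to $\mathbb{A}_1$, i.e. that no loss of mass or domain enlargement occurs — this is precisely the identification issue flagged after \Cref{T.1}, and it becomes tractable here only because boundedness of $a$ removes the singular behavior that causes trouble in general. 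For part (b) there is no serious obstacle; the only thing to be careful about is choosing the escaping-bump construction so that the lower bound on $\|e^{t_0\mathbb{A}_m}f_k\|_{X_{1,m}}$ and the separation $\|e^{t_0\mathbb{A}_m}f_k-e^{t_0\mathbb{A}_m}f_j\|_{X_{1,m}}\ge c$ are both genuinely uniform in $k,j$, which follows from translation quasi-invariance of the heat semigroup combined with the polynomial weight $x^m$ behaving like the constant $k^m$ on the support of the $k$-th bump.
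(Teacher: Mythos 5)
Your part~(a) follows the paper's route: with $a\in L_\infty(0,\infty)$, both the multiplication $f\mapsto af$ and (via \eqref{B.0}) the nonlocal operator $B_m$ are bounded on $X_{1,m}$, so $\mathbb{A}_m$ is a bounded perturbation of an analytic positive generator, and the Dyson--Phillips series gives positivity. One small correction: analyticity does \emph{not} fail at $m=1$ under these hypotheses --- the bounded-perturbation argument yields $\mathbb{A}_1\in\mathcal{H}(X_1)$ directly, exactly as for $m>1$; it is only the sharp bound $\kappa=1$, $\omega=0$ that needs a separate step. Your proposed Kato/Lumer--Phillips dissipativity computation in $X_1$ is a valid way to obtain it; the paper instead identifies $\mathbb{A}_1=\tilde{\mathbb{A}}_1$ via the honesty result \cite[Prop.~4.9.16]{BLL2020a}, which is available precisely because $B_1$ is bounded and $M_1(\mathbb{A}_1 f)=0$.

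Part~(b) is where you diverge from the paper, and where your argument as written has a gap. The paper does not touch the semigroup dynamics at all: \Cref{L.C.2} shows, with a sequence of translating bumps normalized in $X_{1,m}$, that the embedding $\D(\mathbb{A}_m)\hookrightarrow X_{1,m}$ is not compact (a static graph-norm estimate), whence the resolvent is not compact, and Pazy's equivalence theorem \cite[Thm.~2.3.3]{Paz1983} immediately rules out immediate compactness. You instead propagate the bumps by the semigroup and try to show the images at a fixed $t_0>0$ remain separated. The domination $e^{t_0\mathbb{A}_m}f_k\ge e^{-\|a\|_\infty t_0}e^{t_0 A_{0,m}}f_k$ gives a usable \emph{lower} bound near $x=k$, but to get separation of $e^{t_0\mathbb{A}_m}f_k$ and $e^{t_0\mathbb{A}_m}f_j$ you also need an \emph{upper} bound on $e^{t_0\mathbb{A}_m}f_j$ near $x=k$. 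This is not automatic: $B_m$ has a long-range lower-triangular structure (it transports mass from any $y$ to arbitrary $x<y$), so the image of a bump located far out at $y\approx j$ is in general spread over all of $(0,j)$, in contrast with the heat image, and your "almost disjointly supported after weighting" claim only holds for $e^{t_0 A_{0,m}}f_k$, not for $e^{t_0\mathbb{A}_m}f_k$. The gap is repairable --- use mass conservation $M_1(e^{t_0\mathbb{A}_m}f_j)=M_1(f_j)\sim j^{1-m}$ and the bound $x^m\le (k+1)^{m-1}x$ on a window $I_k$ near $k$ to get $\int_{I_k}x^m\,e^{t_0\mathbb{A}_m}f_j\,\mathrm{d}x\le C(k/j)^{m-1}$, then pass to a sufficiently lacunary subsequence so that these cross terms are below the lower bound from the heat kernel --- but this is extra work that the paper's resolvent-compactness route avoids entirely.
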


We immediately obtain the well-posedness of the Cauchy problem~\eqref{CP} in $X_{1,m}$ and, equivalently, of \eqref{FD.0} in a classical sense. Since we shall see that $\D(\mathbb{A}_m)\doteq \D(A_{a,m})$, we can formulate the result as follows:

\begin{corollary}\label{CO.1}
Assume that $a$ and $b$ satisfy \eqref{A.0} and \eqref{B.0}. Assume further that, either $m=1$ and $a\in L_\infty(0,\infty)$, or $m>1$ and $b$ satisfies \eqref{B.10}. Then, for any $f\in X_{1,m}$, there is a unique classical solution 
\begin{equation*}
\phi\in C([0,\infty),X_{1,m}) \cap C^1((0,\infty),X_{1,m}) \cap C((0,\infty),\D(A_{a,m}))
\end{equation*}
to \eqref{FD.0} which is given by $\phi(t) = e^{t\mathbb{A}_m}f$ for $t\ge 0$ and satisfies
\begin{equation}
M_1(\phi(t)) = M_1(f)\,, \qquad t\ge 0\,. \label{M.1}
\end{equation}
Moreover, if $f\ge 0$ a.e. in $(0,\infty)$, then $\phi(t)\ge 0$ for all $t>0$.
\end{corollary}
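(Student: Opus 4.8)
The plan is to deduce \Cref{CO.1} directly from \Cref{T.1} and \Cref{P.10} by invoking the standard correspondence between strongly continuous analytic semigroups and classical solutions of the associated abstract Cauchy problem. First I would treat the case $m>1$: by \Cref{T.1}(b) (using \eqref{B.10}) we have $\mathbb{A}_m\in\mathcal{H}(X_{1,m})$, so $(e^{t\mathbb{A}_m})_{t\ge 0}$ is an analytic semigroup on $X_{1,m}$ with generator $\mathbb{A}_m$ and $\dom(\mathbb{A}_m)=\dom(A_{a,m})$ by \eqref{AB.1}. For an analytic semigroup, for every $f\in X_{1,m}$ the orbit map $t\mapsto e^{t\mathbb{A}_m}f$ lies in $C([0,\infty),X_{1,m})\cap C^\infty((0,\infty),X_{1,m})$ with $e^{t\mathbb{A}_m}f\in\dom(\mathbb{A}_m)$ for all $t>0$, and it solves $\tfrac{\mathrm{d}}{\mathrm{d}t}\phi=\mathbb{A}_m\phi$ on $(0,\infty)$ with $\phi(0)=f$; this gives existence of a classical solution in the stated regularity class since $\D(\mathbb{A}_m)\doteq\D(A_{a,m})$. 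Uniqueness follows from the fact that the only classical solution of a linear Cauchy problem governed by a generator is the semigroup orbit (differentiate $s\mapsto e^{(t-s)\mathbb{A}_m}\psi(s)$ for any classical solution $\psi$ to see it is constant). For $m=1$ with $a\in L_\infty(0,\infty)$ the identical argument applies verbatim using \Cref{P.10}(a), which asserts $\mathbb{A}_1\in\mathcal{H}(X_1)$.

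Next I would establish the equivalence between the abstract solution and a classical solution of \eqref{FD.0}. Here one must check that membership in $\dom(A_{a,m})$ genuinely encodes the pointwise PDE \eqref{FD.1} together with the boundary condition \eqref{FD.2}: for $\phi(t)\in\dom(A_{a,m})$ we have $\phi(t)''\in X_{1,m}$ and $a\phi(t)\in X_{1,m}$ and $\phi(t,0)=0$, and by definition of $\mathbb{A}_m=A_{a,m}+B_m$,
\begin{equation*}
\mathbb{A}_m\phi(t) = \phi(t)'' - a\phi(t) + B_m\phi(t)\,,
\end{equation*}
where $B_m\phi(t)(x)=\int_x^\infty a(y)b(x,y)\phi(t,y)\,\mathrm{d}y$ is exactly the gain term in \eqref{FD.1}. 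Thus $\tfrac{\mathrm{d}}{\mathrm{d}t}\phi=\mathbb{A}_m\phi$ is precisely \eqref{FD.1}--\eqref{FD.2} interpreted in $X_{1,m}$, and $\phi(0)=f$ is \eqref{FD.3}. The reverse implication (any classical solution of \eqref{FD.0} solves \eqref{CP}) is immediate by reading these identities backwards.

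Finally, the mass conservation \eqref{M.1} is a direct transcription of \eqref{M.100} (valid for $m>1$ by \Cref{T.1}(b), and for $m=1$, $a\in L_\infty$, by \Cref{P.10}(a)) to the solution $\phi(t)=e^{t\mathbb{A}_m}f$, and positivity of $\phi(t)$ for $f\in X_{1,m}^+$ follows from $\mathbb{A}_m\in\mathcal{G}_+(X_{1,m})$, i.e. the positivity of the semigroup on the Banach lattice $X_{1,m}$, as asserted in the same statements. I do not expect any serious obstacle here: the corollary is essentially a packaging of \Cref{T.1} and \Cref{P.10}. The only point requiring a little care is making the identification $\D(\mathbb{A}_m)\doteq\D(A_{a,m})$ explicit — this is flagged in the excerpt as something "we shall see" — so I would cite the relevant lemma establishing that the graph norms of $\mathbb{A}_m$ and $A_{a,m}$ are equivalent (which is natural since $B_m$ is $A_{a,m}$-bounded, indeed even relatively compact under the running hypotheses), ensuring that $C((0,\infty),\D(\mathbb{A}_m))$ and $C((0,\infty),\D(A_{a,m}))$ coincide as stated.
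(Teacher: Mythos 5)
Your proposal is correct and coincides with the paper's (unwritten, ``immediate'') argument: invoke \Cref{T.1}(b) or \Cref{P.10}(a) to get $\mathbb{A}_m\in\mathcal{G}_+(X_{1,m})\cap\mathcal{H}(X_{1,m})$ with the mass-conservation identity \eqref{M.100}, apply the standard regularity and uniqueness theory for analytic semigroups, and use the domain identification $\D(\mathbb{A}_m)\doteq\D(A_{a,m})$ (proved via relative boundedness of $B_m$ and the open mapping theorem in the proof of \Cref{T.1}(b)) to express the regularity class in terms of $\D(A_{a,m})$. The only superfluous remark is the parenthetical claim that $B_m$ is $A_{a,m}$-compact; that is neither asserted in the paper nor needed here — relative boundedness already yields the graph-norm equivalence.
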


\begin{remark}\label{R.0}
Given $f\in X_{1,m}^+$, the corresponding solution $\phi$ to \eqref{FD.0} provided by \Cref{CO.1} satisfies the mass conservation \eqref{M.1}, a feature which is in particular due to the assumed boundedness \eqref{A.0} of $a$ on $(0,1)$. Indeed, even when $b$ satisfies \eqref{B.0}, infringement of mass conservation is known to occur when the overall fragmentation $a$ is unbounded for small sizes. In that case, the total mass is a decreasing function of time, a phenomenon usually referred to as shattering which is closely related to the honesty property of the associated semigroup, see \cite{Ban2004, BLL2020a, ChRe1990, Fil1961, Haa2003, Jeo2002, McZi1987}.
\end{remark}

Having settled the well-posedness of~\eqref{FD.0}, we next turn to qualitative properties of its dynamics.
As a guideline it was pointed out in \cite{FJMOS2003} that the interplay between diffusion and fragmentation results in the stabilization of solutions to~\eqref{FD.0}  to a stationary solution. This is in sharp contrast to the fragmentation equation without diffusion, since fragmentation is an irreversible process driving the particle distribution to zero. When diffusion is turned on,  a closed-form stationary solution to \eqref{FD.0} can be computed for the particular choice $a(x)=x$ and $b(x,y) = 2 y^{-1} \mathbf{1}_{(0,y)}(x)$, see \cite{FJMOS2003}. The existence of stationary solutions is also established in \cite{Lau2004} for an overall fragmentation rate $a$ obeying a power law ($a(x)=x^\gamma$, $\gamma\ge 0$) and for a specific class of daughter distribution  function $b$. Here we extend this existence result to a broader class of fragmentation coefficients $a$ and $b$, see \Cref{P.3}. In addition, when $a$ diverges to infinity as $x\to\infty$, we provide the exponential decay of the solution to \eqref{FD.0} to the steady state with the total mass of the initial value. 

\begin{theorem}\label{T.2}
Assume that $a$ and $b$ satisfy \eqref{A.0}, \eqref{B.0}, \eqref{B.10}, and \eqref{C.0}, and that $a>0$ and $b>0$.  There is a unique nonnegative
\begin{equation*}
\psi_1\in \bigcap_{r\ge 1} \dom(\mathbb{A}_{r})
\end{equation*} 
such that $M_1(\psi_1)=1$ and $\mathrm{ker}(\mathbb{A}_m) = \mathbb{R}\psi_1$ for every $m>1$. In particular, for $m>1$,  the spectral bound $s(\mathbb{A}_m)=0$  is a dominant eigenvalue of $\mathbb{A}_m$ and there are $N_m\ge 1$ and $\nu_m>0$ such that, for all $f\in X_{1,m}$,
\begin{equation*}
	\|e^{t  \mathbb{A}_m} f- M_1(f)\psi_1\|_{X_{1,m}}\le N_m e^{-\nu_m t} \| f\|_{X_{1,m}} \,,\quad t\ge 0 \,.
\end{equation*}
\end{theorem}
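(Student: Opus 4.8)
The plan is to read off from \Cref{T.1}(b)--(c) that, for $m>1$, the semigroup $(e^{t\mathbb{A}_m})_{t\ge 0}$ is positive, analytic and \emph{immediately compact} on the Banach lattice $X_{1,m}$, and then to run the Perron--Frobenius/Krein--Rutman machinery for positive semigroups, pinning down the peripheral data by means of the mass conservation identity \eqref{M.100} and of the stationary solution produced in \Cref{P.3}. The first point to settle is \emph{irreducibility}. Fix $m>1$. Since $B_m\ge 0$ with $\dom(A_{a,m})\subset\dom(B_m)$, the Duhamel representation $e^{t\mathbb{A}_m}=e^{tA_{a,m}}+\int_0^t e^{(t-s)\mathbb{A}_m}B_m e^{sA_{a,m}}\,\mathrm{d}s$ and positivity of all operators involved yield the domination $e^{t\mathbb{A}_m}\ge e^{tA_{a,m}}\ge 0$. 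Now $A_{a,m}$ realizes $\partial_x^2-a$ on $(0,\infty)$ with homogeneous Dirichlet condition at $x=0$, and because $a$ is nonnegative and locally bounded (see \eqref{A.0}), $e^{tA_{a,m}}$ has, for every $t>0$, an integral kernel that is pointwise positive on $(0,\infty)^2$ (the half-line Dirichlet heat kernel is pointwise positive and the Feynman--Kac multiplicative functional is a positive random variable). Hence $e^{tA_{a,m}}$, and a fortiori $e^{t\mathbb{A}_m}$, maps every nonzero element of $X_{1,m}^{+}$ to an a.e.\ positive function; in particular $(e^{t\mathbb{A}_m})_{t\ge 0}$ is irreducible.

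Next I would extract the peripheral spectral structure. Fix $t_0>0$: by \Cref{T.1}(c) the operator $e^{t_0\mathbb{A}_m}$ is compact, and by the previous step it is positivity improving, hence irreducible, so de Pagter's theorem gives $r(e^{t_0\mathbb{A}_m})>0$ and the Krein--Rutman theorem for positivity improving compact operators gives that $r(e^{t_0\mathbb{A}_m})$ is an algebraically simple eigenvalue with quasi-interior (in particular a.e.\ positive) eigenvector, and is the only spectral value of that modulus. Since $(e^{t\mathbb{A}_m})_{t\ge 0}$ is eventually norm continuous, the spectral mapping theorem $\sigma(e^{t\mathbb{A}_m})\setminus\{0\}=e^{t\sigma(\mathbb{A}_m)}$ holds; translating the previous facts, $s(\mathbb{A}_m)=\omega_0(\mathbb{A}_m)$, $s(\mathbb{A}_m)$ is an algebraically simple eigenvalue of $\mathbb{A}_m$ with an a.e.\ positive eigenvector $\psi^{(m)}$, and---because the spectrum of the compact operator $e^{t_0\mathbb{A}_m}$ can only accumulate at $0$---there is $\nu_m>0$ with $\sigma(\mathbb{A}_m)\setminus\{s(\mathbb{A}_m)\}\subset\{\mathrm{Re}\,\lambda\le s(\mathbb{A}_m)-\nu_m\}$, and the part of the semigroup on the complementary spectral subspace has growth bound at most $s(\mathbb{A}_m)-\nu_m$.

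Now I would identify $s(\mathbb{A}_m)=0$. The functional $M_1$, represented by $x\mapsto x$, belongs to $(X_{1,m})'$ (since $x\le x+x^m$), and \eqref{M.100} says $(e^{t\mathbb{A}_m})^{*}M_1=M_1$; hence $\|e^{t\mathbb{A}_m}\|_{\mathcal{L}(X_{1,m})}=\|(e^{t\mathbb{A}_m})^{*}\|\ge 1$, so $\omega_0(\mathbb{A}_m)\ge 0$ and thus $s(\mathbb{A}_m)\ge 0$. On the other hand $e^{t\mathbb{A}_m}\psi^{(m)}=e^{ts(\mathbb{A}_m)}\psi^{(m)}$, and applying $M_1$ gives $M_1(\psi^{(m)})=e^{ts(\mathbb{A}_m)}M_1(\psi^{(m)})$ with $M_1(\psi^{(m)})>0$ because $\psi^{(m)}$ is a.e.\ positive; hence $s(\mathbb{A}_m)=0$. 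Consequently $\mathbb{A}_m\psi^{(m)}=0$, $\ker(\mathbb{A}_m)=\mathbb{R}\psi^{(m)}$ is one-dimensional, and (the pole at $0$ being of first order) $\ker(\mathbb{A}_m^{*})=\mathbb{R}M_1$. The stationary solution $\psi_1$ provided by \Cref{P.3} is nonnegative and nonzero, lies in $\bigcap_{r\ge 1}\dom(\mathbb{A}_r)$, and satisfies $\mathbb{A}_r\psi_1=0$ for every $r\ge 1$; in particular it belongs to $\ker(\mathbb{A}_m)$ for every $m>1$, hence is a positive multiple of $\psi^{(m)}$, and after the normalization $M_1(\psi_1)=1$ one gets $\ker(\mathbb{A}_m)=\mathbb{R}\psi_1$ for every $m>1$ with one and the same $\psi_1$. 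Uniqueness of a nonnegative $\psi_1\in\bigcap_{r\ge1}\dom(\mathbb{A}_r)$ with $M_1(\psi_1)=1$ then follows at once from this one-dimensionality. (If one prefers not to invoke \Cref{P.3} here, an alternative is to take $\psi_1:=\psi^{(m_0)}$ normalized by $M_1=1$ for some fixed $m_0>1$ and bootstrap finiteness of all its moments directly from the stationary equation $\psi_1''=a\psi_1-B_r\psi_1$ using \eqref{B.10} and \eqref{C.0}; the decay forced by $a(x)\to\infty$ then places $\psi_1$ in $\dom(\mathbb{A}_r)$ for all $r\ge1$.)

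It remains to deduce the exponential decay. Since $0=s(\mathbb{A}_m)$ is a first-order pole of $R(\cdot,\mathbb{A}_m)$ with rank-one residue, the associated spectral projection has range $\ker(\mathbb{A}_m)=\mathbb{R}\psi_1$ and kernel $\ker(\mathbb{A}_m^{*})^{\perp}=\{f\in X_{1,m}:M_1(f)=0\}$, so it is $P_m f=M_1(f)\psi_1$; moreover $e^{t\mathbb{A}_m}$ commutes with $P_m$, leaves $\ker P_m$ invariant, and its restriction there has growth bound at most $-\nu_m$, hence $\|e^{t\mathbb{A}_m}(I-P_m)\|_{\mathcal{L}(X_{1,m})}\le \tilde N_m e^{-\nu_m t}$ (shrinking $\nu_m$ if needed). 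Using $e^{t\mathbb{A}_m}\psi_1=\psi_1$ we get
\[
e^{t\mathbb{A}_m}f-M_1(f)\psi_1 = e^{t\mathbb{A}_m}(I-P_m)f\,,
\]
whence $\|e^{t\mathbb{A}_m}f-M_1(f)\psi_1\|_{X_{1,m}}\le N_m e^{-\nu_m t}\|f\|_{X_{1,m}}$ for $t\ge 0$, with $N_m\ge 1$ absorbing $\|I-P_m\|_{\mathcal{L}(X_{1,m})}$; and $s(\mathbb{A}_m)=0$, being the unique spectral value on $\{\mathrm{Re}\,\lambda=0\}$ and an eigenvalue with a positive eigenvector, is the dominant eigenvalue. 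I expect the two delicate points to be, first, the proof of irreducibility, which rests on the pointwise positivity of the half-line Dirichlet--Schrödinger kernel together with the domination $e^{t\mathbb{A}_m}\ge e^{tA_{a,m}}$, and, second, upgrading the $X_{1,m}$-eigenvector to an element of $\bigcap_{r\ge1}\dom(\mathbb{A}_r)$ with all moments finite and independent of $m$---precisely the content of the stationary analysis in \Cref{P.3}. Everything else is the standard Perron--Frobenius/Krein--Rutman input plus the elementary decay estimate on the hyperplane $\{M_1=0\}$.
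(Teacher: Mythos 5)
Your architecture---irreducibility, Krein--Rutman for positivity-improving compact operators, identification of $s(\mathbb{A}_m)=0$ via mass conservation, then spectral projection---is a genuinely different route from the paper's. The paper does \emph{not} prove irreducibility at all: it constructs a stationary solution by Schauder's fixed point theorem on a bounded convex set that is invariant under $e^{t\mathbb{A}_m}$ (using the moment estimate coming from \eqref{B.11} and \eqref{C.0}), proves uniqueness via dissipativity of $\mathbb{A}_1$, establishes $\ker(\mathbb{A}_m)=\ker(\mathbb{A}_m^2)=\mathbb{R}\psi_1$ directly from mass conservation, and then controls the peripheral spectrum by the boundary-spectrum theorem for positive semigroups \cite[Theorem~8.14]{CHADP1987}: $\sigma(\mathbb{A}_m)\cap i\mathbb{R}$ is either $\{0\}$ or infinite, and the latter is excluded by compactness. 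Your route delivers irreducibility as a bonus but rests on a claim of a.e.\ positivity of the Dirichlet--Schr\"odinger kernel of $e^{tA_{a,m}}$ in the weighted space, via Feynman--Kac; this is plausible under \eqref{A.0} but is not immediate in the $X_{1,m}$ framework and would need an argument or a reference (e.g.\ by identifying $e^{tA_{a,m}}$ with the restriction to $X_{1,m}$ of the $L_1(0,\infty)$-absorption semigroup and invoking strict positivity there). The paper's route sidesteps this entirely.

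There is a concrete error in the step supplying $\psi_1$: you invoke \Cref{P.3}, but \Cref{P.3} hypothesizes \eqref{St.1}, i.e.\ $\liminf_{x\to\infty}a(x)<\infty$, which is \emph{incompatible} with the hypothesis \eqref{C.0} of \Cref{T.2}. These two regimes are explicitly disjoint in the paper, and the logical dependence is reversed: \Cref{P.3} is proved by applying the existence part of \Cref{T.2} to the perturbed rate $a_n=a+x/n$ (which satisfies \eqref{C.0}), not the other way around. Within your own framework the clean repair is simpler than the bootstrap you sketch parenthetically: run Krein--Rutman for \emph{every} $m>1$ to get a one-dimensional $\ker(\mathbb{A}_m)$ with an a.e.\ positive generator; the domain nesting $\dom(\mathbb{A}_{m'})\subset\dom(\mathbb{A}_m)$ for $m'>m>1$ (a consequence of $X_{1,m'}\subset X_{1,m}$) forces $\ker(\mathbb{A}_{m'})\subset\ker(\mathbb{A}_m)$, hence equality of the one-dimensional kernels, hence a single $M_1$-normalized element $\psi_1$ belonging to $\dom(\mathbb{A}_r)$ for every $r\ge1$. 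Once that is in place, your identification of the residue $P_m f=M_1(f)\psi_1$ and the exponential decay on $\ker M_1$ match the paper's final step.
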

	
It is worth pointing out that the stationary solution $\psi_1$ decays faster than algebraically at infinity, a property which is perfectly consistent with the exponentially decaying tail experimentally observed in \cite{MFJLODS2004}. Also, combining \Cref{T.2} with \Cref{L.2.1} below implies that $\psi_1\in X_{r}$ for all $r>-1$.

\medskip

\Cref{T.2} provides a complete description of the long-term behavior of solutions to \eqref{FD.0} when $a$ diverges to infinity as $x\to \infty$. However, the unboundedness of $a$ at infinity is not a necessary condition for the existence of stationary solutions. In fact, when
	\begin{equation*}
		a(x) = 1\,, \qquad b(x,y) = \frac{2}{y} \mathbf{1}_{(0,y)}(x)\,, \qquad 0<x<y\,,
	\end{equation*}
we notice that
\Cref{FD.0}  has an explicit stationary solution $\psi_1(x) = x e^{-x}$, $x>0$.\footnote{We actually compute explicit stationary solutions to \eqref{FD.0} when 
$a(x)=x^\gamma$ and $b(x,y)=(\nu+2) x^\nu y^{-\nu-1}$ for $\gamma\ge 0$, $\nu\in (-2,0]$ in~\cite{LW21}.} 
This particular example is not peculiar and we actually obtain the existence of stationary solutions to \eqref{FD.0} as soon as there is a positive lower bound for $a$ as $x\to \infty$.

\begin{proposition}\label{P.3}
	Assume that $a$ and $b$ satisfy \eqref{A.0}, \eqref{B.0}, and \eqref{B.10}, and that $a>0$ and $b>0$.  Assume further that
	\begin{equation}
		\alpha := \frac{1}{2}\liminf_{x\to\infty} a(x) \in (0,\infty)\,. \label{St.1}
	\end{equation}
There is a unique nonnegative
	\begin{equation*}
		\psi_1\in \bigcap_{r\ge 1} \dom(\mathbb{A}_{r})
	\end{equation*} 
	such that $M_1(\psi_1)=1$ and $\mathrm{ker}(\mathbb{A}_m) = \mathbb{R}\psi_1$ for every $m>1$.
\end{proposition}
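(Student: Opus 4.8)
The plan is to follow the same route as for \Cref{T.2}, the sole difference being that the immediate compactness of $(e^{t\mathbb{A}_m})_{t\ge 0}$ guaranteed by \Cref{T.1}(c) under the stronger hypothesis \eqref{C.0} is no longer available and must be replaced by a gap between the spectral bound $s(\mathbb{A}_m)$ and the essential spectral bound of $\mathbb{A}_m$, the latter being where \eqref{St.1} enters. Throughout, fix $m>1$; by \Cref{T.1}(b), $\mathbb{A}_m\in\mathcal{G}_+(X_{1,m})\cap\mathcal{H}(X_{1,m})$ and the first moment is conserved, see \eqref{M.100}. The first step is to check that $s(\mathbb{A}_m)=0$: on the one hand, $|e^{t\mathbb{A}_m}f|\le e^{t\mathbb{A}_m}|f|$ by positivity, while a moment estimate combining \eqref{B.10} (strict decay of $M_m$ under fragmentation) with \eqref{St.1} yields $\mathrm{d}M_m(\phi)/\mathrm{d}t\le -c\,M_m(\phi)+C\,M_1(\phi)$ for nonnegative solutions $\phi$, hence a uniform bound $\|e^{t\mathbb{A}_m}\|_{\mathcal{L}(X_{1,m})}\le\kappa$ and thus $s(\mathbb{A}_m)\le 0$; on the other hand, \eqref{B.0} says exactly that the nontrivial bounded linear form $f\mapsto M_1(f)$ is invariant under the dual semigroup, so that $0\in\sigma(\mathbb{A}_m^*)=\sigma(\mathbb{A}_m)$. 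Hence $s(\mathbb{A}_m)=0$ and only the spectrum of $\mathbb{A}_m$ near the origin is at stake.

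The heart of the matter, and the main obstacle, is to show that the essential spectral bound of $\mathbb{A}_m$ is negative, so that the spectrum of $\mathbb{A}_m$ in $\{\mathrm{Re}\,\lambda\ge 0\}$ consists only of isolated eigenvalues of finite algebraic multiplicity. The mechanism is that, in the high-moment part of the norm, fragmentation is dissipative at a rate comparable to $a$ by \eqref{B.10}, and $a$ is bounded below by a positive constant for large sizes thanks to \eqref{St.1}: the same moment estimate as above, localized to large sizes, shows that the corresponding part of the evolution decays exponentially, the diffusion contributing only lower-order moments that are absorbed. The contribution of the bounded size region (on which $a\in L_{\infty,loc}([0,\infty))$ is bounded, so that the associated operator has compact resolvent exactly as in the proof of \Cref{T.1}(c)) and of the part of $B_m$ feeding mass into that region should be gathered into a compact remainder. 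Altogether this should produce a decomposition $e^{t\mathbb{A}_m}=S_1(t)+S_2(t)$ with $\|S_1(t)\|_{\mathcal{L}(X_{1,m})}$ decaying exponentially and $S_2(t)$ compact for $t>0$, whence the desired gap. Making this splitting rigorous — in particular handling the nonlocal term $B_m$ without any regularity assumption on $b$, which calls for a Dyson--Phillips-type analysis in the spirit of the fragmentation literature — is where the real work lies; note that one cannot hope to do better than a gap for the essential spectrum, as shown by \Cref{P.10}(b).

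Granting this gap, the spectral bound $0=s(\mathbb{A}_m)\in\sigma(\mathbb{A}_m)$ is an isolated eigenvalue of finite algebraic multiplicity, i.e.\ a pole of the resolvent; since $(e^{t\mathbb{A}_m})_{t\ge 0}$ is positive and, because $a>0$ and $b>0$, irreducible (no nontrivial closed ideal of $X_{1,m}$ is invariant, $b>0$ transporting mass downwards to all sizes and the diffusion to all sizes), the Perron--Frobenius/De\ Pagter theory for irreducible positive semigroups gives that this pole is of first order, that $0$ is algebraically simple with one-dimensional eigenspace $\mathbb{R}\psi_1$ for some $\psi_1>0$ a.e., normalized by $M_1(\psi_1)=1$, and with a strictly positive eigenfunctional; in particular $\mathrm{ker}(\mathbb{A}_m)=\mathbb{R}\psi_1$. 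It remains to upgrade the integrability of $\psi_1$ and to remove the dependence on $m$. From $\psi_1\in\dom(\mathbb{A}_m)$ and $\mathbb{A}_m\psi_1=0$ one gets $\psi_1''=a\psi_1-B_m\psi_1$ with $\psi_1(0)=0$; testing this identity against $x^r$, using that $\psi_1$ vanishes linearly at the origin to control the boundary and low-order terms, and invoking \eqref{B.10} (which for $r\ge 2$ gives $\int_0^y x^r b(x,y)\,\mathrm{d}x\le(1-\delta_2)y^r$) together with \eqref{St.1}, leads after truncation to a recursion of the form $c\,M_r(a\psi_1)\le r(r-1)M_{r-2}(\psi_1)+C_r$; since the moments of $\psi_1$ of order in $[1,m]$ are finite, this propagates to $M_r(\psi_1)<\infty$ for all $r\ge 1$ and then, $B_m$ being controlled by $a\psi_1$ via \eqref{B.10} and \eqref{B.0}, to $a\psi_1,\,B_m\psi_1,\,\psi_1''\in X_{1,r}$ for every $r\ge 1$, that is, $\psi_1\in\bigcap_{r\ge 1}\dom(\mathbb{A}_r)$. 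Finally, the very same recursion applies to any element $\phi$ of $\mathrm{ker}(\mathbb{A}_{m'})$ for arbitrary $m'>1$, which therefore lies in $\dom(\mathbb{A}_m)$ with $\mathbb{A}_m\phi=0$, hence in $\mathrm{ker}(\mathbb{A}_m)=\mathbb{R}\psi_1$; conversely $\psi_1\in\dom(\mathbb{A}_{m'})$ with $\mathbb{A}_{m'}\psi_1=0$. Thus $\mathrm{ker}(\mathbb{A}_{m'})=\mathbb{R}\psi_1$ for every $m'>1$ with the same $\psi_1$, which is the asserted uniqueness.
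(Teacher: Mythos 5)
The heart of your proposal---establishing that the essential spectral bound of $\mathbb{A}_m$ is strictly negative under \eqref{St.1}---is not proved: you describe a hoped-for decomposition $e^{t\mathbb{A}_m}=S_1(t)+S_2(t)$ into an exponentially decaying part and a compact part, then explicitly concede that carrying this out rigorously (in particular, dealing with $B_m$ via a Dyson--Phillips analysis) ``is where the real work lies''. This is exactly the obstruction the paper points out in the discussion preceding \Cref{P.3}: under the weaker hypothesis \eqref{St.1} the semigroup need not be compact (\Cref{P.10}), and the authors state explicitly that they take a different route because the spectral route ``does not allow us to retrieve information on the long-term behavior'', leaving a full spectral analysis of $\mathbb{A}_m$ under \eqref{St.1} as an open problem. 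In fact, if your spectral-gap argument could be made to work, it would yield strictly more than \Cref{P.3}: you would obtain the exponential stabilization of \Cref{T.2} under \eqref{St.1} alone, which the paper pointedly does \emph{not} claim. So the missing step is not a technicality that can be waved through---it is a genuinely open question which your route presupposes.

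The paper's actual proof avoids spectral considerations entirely and proceeds by approximation. Set $a_n(x):=a(x)+x/n$. Each $a_n$ satisfies \eqref{C.0}, so \Cref{L.100} applies and yields a unique normalized nonnegative stationary solution $\psi_{1,n}$ for the operator $\mathbb{A}_{m,n}$ (i.e.\ with $a_n$ in place of $a$). One then derives from the stationary equation \eqref{St.2}, together with \Cref{L.B.1.5} and \eqref{St.1}, uniform-in-$n$ bounds on $M_m(\psi_{1,n})$, $\|\psi_{1,n}''\|_{X_{1,m}}$, and $\|a_n\psi_{1,n}\|_{X_{1,m}}$; these estimates do use the positive lower bound $\alpha$ from \eqref{St.1} on large sizes, in a spirit similar to the moment estimate you use to control $s(\mathbb{A}_m)$, but only along the stationary solutions, not at the level of the semigroup. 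These bounds give relative compactness of $(\psi_{1,n})_{n\ge 1}$ in $X_r$ for all $r>0$ (via a weighted compactness criterion), and passing to the limit in $\mathbb{A}_m\psi_{1,n}=\mathcal{R}_n$ with $\|\mathcal{R}_n\|_{X_{1,m}}=O(1/n)$, using that $\mathbb{A}_m$ is closed, shows the cluster point $\psi_1$ lies in $\dom(\mathbb{A}_m)$ with $\mathbb{A}_m\psi_1=0$, $\psi_1\ge 0$, $M_1(\psi_1)=1$. Uniqueness of the normalized steady state is then obtained not from Perron--Frobenius, but from the dissipativity of $\mathbb{A}_1$ on $X_1$, as in the references cited in the proof of \Cref{L.100}. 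Your moment-bootstrap argument for $\psi_1\in\bigcap_{r\ge 1}\dom(\mathbb{A}_r)$ and for the identification of $\ker(\mathbb{A}_{m'})$ for all $m'>1$ is sound and parallel to what the paper does, but it rests on having a steady state in hand, which your proposal never secures.
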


Let us mention here that there is no loss of generality in assuming the finiteness of $\liminf_{x\to\infty} a(x)$ in \eqref{St.1}. Indeed, if $\liminf_{x\to\infty} a(x)=\infty$, then $a$ satisfies \eqref{C.0}, a situation which is dealt with in \Cref{T.2}.

When $a$ only satisfies \eqref{St.1}, the associated semigroup $\left( e^{t  \mathbb{A}_m} \right)_{t\ge 0}$ need not be compact, see \Cref{P.10}. We thus take a different route to prove \Cref{P.3} an approximation procedure.
This approach does not allow us to retrieve information on the long-term behavior and it is likely that, either a more precise study of the operator $\mathbb{A}_1$, or a different approach (such as the one developed in \cite{MMP2005}) is required to fully identify the long-term behavior when $a$ only satisfies \eqref{St.1}.

\medskip

Let us end this introduction with a brief outline of the paper. Auxiliary results are gathered in the next section, which includes integrability properties of elements of $\dom(A_{0,1})$ on the one hand, and a weighted version of Kato's inequality on the other hand. In \Cref{S.2.1}, we recall some properties of the heat semigroup in the weighted $L_1$-space $X_{1,m}$  with $m\ge 1$, relying on the explicit representation formula which is available in that case. \Cref{S.2.2} is devoted to the Schr\"odinger operator $A_{a,m}$ and the associated absorption semigroup and is mostly a consequence of the thorough study performed in \cite{ArBa1993}. We then use a perturbation argument in \Cref{S.2.3} to study the full fragmentation-diffusion operator $\mathbb{A}_m=A_{a,m}+B_m$. On the one hand, for $m=1$, the existence of an extension $\tilde{\mathbb{A}}_1\in \mathcal{G}_+(X_1,1,0)$ of $\mathbb{A}_1$ is a consequence of \cite{Voi1987}. On the other hand, if $m>1$, then we can use a Miyadera perturbation technique to prove that $\mathbb{A}_m\in \mathcal{H}(X_{1,m})$. We recall that this approach has already proved successful for the fragmentation equation without size diffusion, see \cite{Ban2020}. The remainder of the paper is then devoted to the long-term dynamics. As a preliminary step, we first establish in \Cref{S.2.4} the immediate compactness of the semigroup in $X_{1,m}$ for $m>1$ when $a$ diverges to infinity as $x\to\infty$. We then construct in \Cref{S.2.5} a bounded convex subset of $X_{1,m}$ which is invariant with respect to the semigroup. This feature, along with the immediate compactness of the semigroup, implies the existence of at least one stationary solution for any given mass. After showing that the latter is unique, we perform a detailed study of the spectrum of $\mathbb{A}_m$ and end up with the announced convergence at a (yet non-explicit) exponential rate. Building upon the analysis performed in \Cref{S.2.5}, we turn to the proof of \Cref{P.3} in \Cref{sec.SM} which relies on an approximation procedure.
Specifically, introducing $a_n(x) := a(x) + x/n$ for $x>0$ and $n\ge 1$, we deduce from \Cref{T.2} that there is a unique  non-negative stationary solution $\psi_{1,n}$ to \eqref{FD.0} with $a_n$ instead of $a$. We then show that cluster points as $n\to\infty$ of this sequence are stationary solutions to \eqref{FD.0}.

\section{Auxiliary Results} \label{S.2}

According to the definition of $\dom(A_{0,1})$, an important role is played in the forthcoming analysis by functions $f\in X_1$ such that $f''\in X_1$. We collect useful properties of this class of functions in the next lemma and show, in particular, that the boundary condition \eqref{FD.3} is well-defined for such functions.

\begin{lemma}\label{L.2.1}
	Consider $f\in X_1$ such that $f''\in X_1$. Then $f\in BC([0,\infty))\cap C^1((0,\infty))$, $f'\in L_1(0,\infty)$, and, for $x> 0$,
	\begin{subequations}\label{P.1}
	\begin{equation}
		|f(x)| \le \|f''\|_{X_1}\,, \qquad x |f'(x)|\le \|f''\|_{X_1}\,, \qquad \|f'\|_{L_1(0,\infty)} \le \|f''\|_{X_1}\,. \label{P.1a}
	\end{equation}
	Moreover,
	\begin{equation}
		\lim_{x\to \infty} x f(x) = \lim_{x\to \infty} x f'(x) = 0\,. \label{P.1b}
	\end{equation}
	In fact, $f$ and $f'$ are given by
	\begin{equation}
		f(x) = - \int_x^\infty f'(y)\ \mathrm{d}y\,, \quad f'(x) = - \int_x^\infty f''(y)\ \mathrm{d}y\,, \qquad x\in (0,\infty)\,. \label{P.1c}
	\end{equation}
	\end{subequations}
	Also, $f\in X_m$ for any $m\in (-1,1)$ and, for all $\varepsilon>0$,
	\begin{equation}
		\|f\|_{X_m} \le \frac{\varepsilon^{m+1}}{m+1} \|f''\|_{X_1} + \varepsilon^{m-1} \|f\|_{X_1}\,. \label{P.2}
	\end{equation}
	Equivalently,
	\begin{equation}
		\|f\|_{X_m} \le \frac{2 (1-m)^{(m-1)/2}}{m+1} \|f''\|_{X_1}^{(1-m)/2} \|f\|_{X_1}^{(m+1)/2}\,,\qquad m\in (-1,1)\,. \label{P.2a}
	\end{equation}
\end{lemma}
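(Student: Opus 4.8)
The plan is to start from the representation formulas \eqref{P.1c}, which encode the boundary conditions at infinity, and then derive everything else from them. First I would argue that the candidate formula $g(x) := -\int_x^\infty f''(y)\,\mathrm{d}y$ makes sense: since $f''\in X_1 = L_1((0,\infty),x\,\mathrm{d}x)$, the function $f''$ is in particular integrable on every interval $(x,\infty)$ with $x>0$ because $1 \le y/x$ there, so $g$ is well-defined, continuous on $(0,\infty)$, and $g' = f''$ in the sense of distributions. Hence $f' - g$ is (a.e. equal to) a constant $c$ on $(0,\infty)$. To see $c=0$, I would use that $f\in X_1$ forces $f$ to have a subsequence along which $x f(x)\to 0$ as $x\to\infty$, together with the crude bound $|g(x)| \le \int_x^\infty |f''(y)|\,\mathrm{d}y \le \tfrac1x\|f''\|_{X_1}\to 0$; if $c\ne 0$ then $f'(x)\to c$, which makes $|f(x)|$ grow linearly and contradicts $f\in X_1$. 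This gives the second identity in \eqref{P.1c}, $f' = g$, and in particular $f\in C^1((0,\infty))$ with $f'\in L_1(0,\infty)$ and $\|f'\|_{L_1} \le \int_0^\infty\!\int_x^\infty |f''(y)|\,\mathrm{d}y\,\mathrm{d}x = \int_0^\infty y|f''(y)|\,\mathrm{d}y = \|f''\|_{X_1}$ by Tonelli. The same argument applied to $f$ and $f'$ (now using $f'\in L_1(0,\infty)$ and $f\in X_1$) yields the first identity in \eqref{P.1c} and the limit $\lim_{x\to\infty} f(x)=0$, whence $f\in BC([0,\infty))$ after noting $f$ extends continuously to $x=0$ since $f' \in L_1(0,\infty)$. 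The pointwise bounds in \eqref{P.1a} then follow: $|f(x)| \le \int_x^\infty|f'(y)|\,\mathrm{d}y \le \|f'\|_{L_1} \le \|f''\|_{X_1}$, and $x|f'(x)| = x\left|\int_x^\infty f''(y)\,\mathrm{d}y\right| \le \int_x^\infty y|f''(y)|\,\mathrm{d}y \le \|f''\|_{X_1}$.

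Next I would upgrade the qualitative limits in \eqref{P.1b}. From $f'= -\int_x^\infty f''$ and $f''\in X_1$, the tail $\int_x^\infty y|f''(y)|\,\mathrm{d}y \to 0$ as $x\to\infty$ by dominated convergence, and since $x|f'(x)|$ is bounded by this tail we get $xf'(x)\to 0$. For $xf(x)$, write $x f(x) = -x\int_x^\infty f'(y)\,\mathrm{d}y$; splitting $f'(y) = -\int_y^\infty f''(z)\,\mathrm{d}z$ and using Tonelli gives $|x f(x)| \le x\int_x^\infty\!\int_y^\infty |f''(z)|\,\mathrm{d}z\,\mathrm{d}y = x\int_x^\infty (z-x)|f''(z)|\,\mathrm{d}z \le \int_x^\infty z|f''(z)|\,\mathrm{d}z \to 0$, using $x(z-x) \le z\cdot z$ is not quite it — rather $x(z-x) \le xz \le z^2$ when $x\le z$ — so the bound is the same tail of $\|f''\|_{X_1}$ and vanishes.

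Finally, for the weighted estimates: fix $m\in(-1,1)$ and $\varepsilon>0$ and split $\|f\|_{X_m} = \int_0^\varepsilon x^m|f(x)|\,\mathrm{d}x + \int_\varepsilon^\infty x^m|f(x)|\,\mathrm{d}x$. On $(\varepsilon,\infty)$ use $x^m \le \varepsilon^{m-1} x$ (valid since $m-1<0$ and $x\ge\varepsilon$) to bound the second integral by $\varepsilon^{m-1}\|f\|_{X_1}$. On $(0,\varepsilon)$ use the uniform bound $|f(x)| \le \|f''\|_{X_1}$ from \eqref{P.1a} and $\int_0^\varepsilon x^m\,\mathrm{d}x = \varepsilon^{m+1}/(m+1)$ (finite as $m>-1$) to bound the first integral by $\varepsilon^{m+1}\|f''\|_{X_1}/(m+1)$; this is \eqref{P.2}, and in particular $f\in X_m$. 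Then \eqref{P.2a} is pure calculus: minimize the right-hand side of \eqref{P.2} over $\varepsilon>0$. Setting the derivative to zero gives $\varepsilon^2 = (1-m)\|f\|_{X_1}/\|f''\|_{X_1}$ (one may assume both norms nonzero, the degenerate cases being trivial), and substituting back yields, after simplification, the constant $2(1-m)^{(m-1)/2}/(m+1)$ and the stated product of powers with exponents $(1-m)/2$ and $(m+1)/2$.

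I do not expect a genuine obstacle here; the only points requiring care are the justification that $f'$ (resp.\ $f$) has no nonzero additive constant — i.e.\ that the "constant of integration" vanishes — which is where the membership $f\in X_1$ is essential and is the one place the argument is not purely formal, and the bookkeeping of the Tonelli interchanges to get the sharp constants $\|f'\|_{L_1},\,x|f'(x)|,\,|f(x)| \le \|f''\|_{X_1}$.
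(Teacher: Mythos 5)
Your overall strategy — derive the representation formulas first, then read off the pointwise bounds, then prove the weighted estimate by splitting at $\varepsilon$ — is sound and closely parallels the paper (the paper introduces $F(x)=\int_x^\infty(y-x)f''(y)\,\mathrm{d}y$ and shows $f-F$ is affine; you integrate $f''$ once and then once more, which amounts to the same thing). Your derivation of \eqref{P.2} is in fact a bit cleaner than the paper's, since you use the uniform bound $|f|\le\|f''\|_{X_1}$ on $(0,\varepsilon)$ directly rather than re-expanding via $f'$ and Fubini.

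However, there is a genuine gap in your proof of $\lim_{x\to\infty} x f(x)=0$. You reduce it to showing $x\int_x^\infty(z-x)|f''(z)|\,\mathrm{d}z\to 0$. The inequality you first write, $x(z-x)\le z$, is simply false for $z\ge x$ (take $z=2x$: the left side is $x^2$, the right is $2x$). You then try $x(z-x)\le z^2$, which is true, but that gives a bound by $\int_x^\infty z^2|f''(z)|\,\mathrm{d}z$; this is \emph{not} ``the same tail of $\|f''\|_{X_1}$'' — it involves the weight $z^2$, not $z$, and under the hypothesis $f''\in X_1$ this integral need not even be finite. In fact no bound involving only $f''$ can close this step: for instance $f''=\sum_n\mathbf{1}_{[n,\,n+n^{-3}]}$ lies in $X_1$, yet the corresponding $F(x)=\int_x^\infty(y-x)f''(y)\,\mathrm{d}y$ behaves like $1/(2x)$, so $xF(x)\not\to 0$ (this $F$ also fails to lie in $X_1$, which is why it is not a counterexample to the lemma, only to your method). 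The hypothesis $f\in X_1$ must be invoked at this point. The paper does so via the identity
\begin{equation*}
	f(x)^2=-2\int_x^\infty f(y)f'(y)\,\mathrm{d}y\,,
\end{equation*}
valid since $f\in L_\infty$, $f'\in L_1$, and $f(y)\to 0$ as $y\to\infty$, and then bounds
\begin{equation*}
	f(x)^2\le 2\int_x^\infty y|f'(y)|\,|f(y)|\,\frac{\mathrm{d}y}{y}\le\frac{2}{x^2}\,\|f''\|_{X_1}\int_x^\infty y\,|f(y)|\,\mathrm{d}y\,,
\end{equation*}
using $y|f'(y)|\le\|f''\|_{X_1}$ from \eqref{P.1a} and $y^{-1}\le y/x^2$ for $y\ge x$; multiplying by $x^2$ and letting $x\to\infty$ gives $xf(x)\to 0$ because $f\in X_1$. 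You should replace your final paragraph of the $xf(x)$ argument with something along these lines.
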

	
\begin{proof}
	Introducing
	\begin{equation*}
		F(x) := \int_x^\infty (y-x) f''(y)\ \mathrm{d}y\,, \qquad x\in (0,\infty)\,, 
	\end{equation*}
	it follows from the integrability of $f''$ that 
	\begin{equation*}
		- \|f''\|_{X_1} \le - \int_x^\infty (y-x) |f''(y)| \mathrm{d}y \le F(x) \le \int_x^\infty (y-x) |f''(y)| \mathrm{d}y \le \|f''\|_{X_1}\,,
	\end{equation*}
	so that $F(x)$ is well-defined for $x\ge 0$. Moreover, $F$ belongs to $BC([0,\infty))\cap C^1((0,\infty))\cap W_{1,loc}^2((0,\infty))$ and satisfies 
	\begin{equation}
		F'(x) = - \int_x^\infty f''(y)\ \mathrm{d}y\,, \qquad F''(x) = f''(x)\,, \qquad x\in (0,\infty)\,, \label{P.3a}
	\end{equation}
	and
	\begin{equation}
		\lim_{x\to \infty} x F'(x) = \lim_{x\to \infty} F(x) = 0\,. \label{P.3b}
	\end{equation}
	In particular, we infer from \eqref{P.3a} that there is $(\alpha,\beta)\in\mathbb{R}^2$ such that $(f-F)(x) = \alpha + \beta x$ for $x>0$. Moreover, since $f\in X_1$, it follows from \eqref{P.3b} that
		\begin{equation*}
			\lim_{x\to\infty} \int_x^{x+1} |\alpha + \beta y|\ \mathrm{d}y \le \lim_{x\to\infty} \int_x^{x+1} (|f(y)|+|F(y)|)\ \mathrm{d}y = 0\,,
		\end{equation*}
		which readily gives $\alpha=\beta=0$ and $F=f$, thereby establishing \eqref{P.1}, except for the limiting behavior of $f$ at infinity. To this end, we observe that, since $f\in L_\infty(0,\infty)$ and $f'\in L_1(0,\infty)$, a formula for $f(x)^2$ reads
		\begin{equation*}
			f(x)^2 = - 2 \int_x^\infty f(y) f'(y)\ \mathrm{d}y\,, \qquad x\ge 0\,.
		\end{equation*}
		We then deduce from \eqref{P.1a} that
		\begin{equation*}
			f(x)^2 \le 2 \int_x^\infty y |f'(y)|  |f(y)| \ \frac{\mathrm{d}y}{y} \le \frac{2}{x^2} \|f''\|_{X_1} \int_x^\infty y |f(y)|\ \mathrm{d}y\,,
		\end{equation*}
		which implies that $x f(x)\to 0$ as $x\to\infty$ due to $f\in X_1$.
		
		Next, let $\varepsilon>\delta>0$. It follows from \eqref{P.1c} that
		\begin{align*}
			\int_\delta^\infty x^m |f(x)|\ \mathrm{d}x & \le \int_\delta^\varepsilon x^m |f(x)|\ \mathrm{d}x + \varepsilon^{m-1} \int_\varepsilon^\infty x\ |f(x)|\ \mathrm{d}x \\
			& \le \int_\delta^\varepsilon x^m \int_x^\infty |f'(y)|\ \mathrm{d}y \mathrm{d}x + \varepsilon^{m-1} \|f\|_{X_1} \\ 
			& \le \int_\delta^\varepsilon x^m \int_x^\varepsilon |f'(y)|\ \mathrm{d}y \mathrm{d}x + \frac{\varepsilon^{m+1}}{m+1} \int_\varepsilon^\infty |f'(y)|\ \mathrm{d}y + \varepsilon^{m-1} \|f\|_{X_1} \,.
		\end{align*}
		By  Fubini's theorem,
		\begin{equation*}
			\int_\delta^\varepsilon x^m \int_x^\varepsilon |f'(y)|\ \mathrm{d}y \mathrm{d}x = \frac{1}{m+1} \int_\delta^\varepsilon \left( y^{m+1} - \delta^{m+1} \right) |f'(y)|\ \mathrm{d}y \le \frac{\varepsilon^{m+1}}{m+1} \int_0^\varepsilon |f'(y)|\ \mathrm{d}y \,.
		\end{equation*}
Combining the above inequalities with \eqref{P.1a} gives
		\begin{equation*}
			\int_\delta^\infty x^m |f(x)|\ \mathrm{d}x \le \frac{\varepsilon^{m+1}}{m+1} \int_0^\infty |f'(y)|\ \mathrm{d}y + \varepsilon^{m-1} \|f\|_{X_1} \le \frac{\varepsilon^{m+1}}{m+1} \|f''\|_{X_1} + \varepsilon^{m-1} \|f\|_{X_1}\,.
		\end{equation*}
		Letting $\delta\to 0$ completes the proof of \eqref{P.2}. We next optimize \eqref{P.2} with respect to $\varepsilon\in (0,\infty)$ to derive \eqref{P.2a}.
	\end{proof}
	
	We next state for the sake of completeness the density in $X_{1,m}$ of smooth functions with compact support in $(0,\infty)$, which is actually a straightforward consequence of the density of $C_c^\infty((0,\infty))$ in $L_1(0,\infty)$. 
\begin{lemma}\label{L.2.2}
	The space $C_c^\infty((0,\infty))$ is dense in $X_{1,m}$ for $m\ge 1$. 
\end{lemma}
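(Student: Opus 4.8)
The plan is to observe that $X_{1,m}$ is, via an explicit weight, an isometric copy of $L_1(0,\infty)$, and that the corresponding isomorphism maps $C_c^\infty((0,\infty))$ onto itself. First I would record that, for $f\in X_{1,m}$ with $m\ge 1$,
\[
\|f\|_{X_{1,m}} = \|f\|_{X_1} + \|f\|_{X_m} = \int_0^\infty \big(x+x^m\big)\,|f(x)|\ \mathrm{d}x\,,
\]
so that the multiplication operator $M\colon g\mapsto (x+x^m)^{-1}g$ is an isometric isomorphism from $L_1(0,\infty)$ onto $X_{1,m}$, with inverse $M^{-1}\colon f\mapsto (x+x^m)f$.

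Next, since both $x\mapsto (x+x^m)^{-1}$ and $x\mapsto x+x^m$ are smooth and strictly positive on $(0,\infty)$, multiplication by either of them preserves smoothness on $(0,\infty)$ and leaves the support of a function unchanged; hence $M$ and $M^{-1}$ each map $C_c^\infty((0,\infty))$ into itself, which gives $M\big(C_c^\infty((0,\infty))\big)=C_c^\infty((0,\infty))$. It then suffices to invoke the classical density of $C_c^\infty((0,\infty))$ in $L_1(0,\infty)$: applying the isometry $M$ and using the previous identity, $C_c^\infty((0,\infty))=M\big(C_c^\infty((0,\infty))\big)$ is dense in $M\big(L_1(0,\infty)\big)=X_{1,m}$.

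There is essentially no obstacle in this argument; the only point needing a line of care is the stability of $C_c^\infty((0,\infty))$ under multiplication by $x+x^m$ and by $(x+x^m)^{-1}$, which holds precisely because the origin is excluded from the domain. As an alternative, one could proceed directly: given $f\in X_{1,m}$, first approximate it in $X_{1,m}$ by the truncations $f\mathbf{1}_{(1/n,n)}$, using dominated convergence with dominating function $(x+x^m)|f|\in L_1(0,\infty)$; then, for a function supported in a fixed compact interval $[a,b]\subset(0,\infty)$, mollify with a kernel of width less than $a/2$ so that the mollifications remain supported in $[a/2,b+1]\subset(0,\infty)$, and note that on this interval the $X_{1,m}$-norm is equivalent to the $L_1$-norm, so that $L_1$-convergence of the mollifications upgrades to convergence in $X_{1,m}$.
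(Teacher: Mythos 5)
Your proposal is correct and takes essentially the same route as the paper: the paper simply notes that the lemma is a straightforward consequence of the density of $C_c^\infty((0,\infty))$ in $L_1(0,\infty)$, and your isometry $M\colon g\mapsto (x+x^m)^{-1}g$ (which preserves $C_c^\infty((0,\infty))$ because $x+x^m$ is smooth and strictly positive on $(0,\infty)$) is precisely the natural way to make that reduction explicit.
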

	
We finally recall a variant of the celebrated inequality of Kato \cite[Lemma~A]{Kat1972}.
	
\begin{lemma}\label{L.2.3}
	Let $\ell$ be a nonnegative function in $W_{\infty,loc}^1([0,\infty))$ and consider $f\in W_{1,loc}^2(0,\infty)$ such that $f'\in L_1((0,\infty),\ell'(x)\mathrm{d}x)$ and $f''\in L_1((0,\infty),\ell(x)\mathrm{d}x)$. Then
	\begin{equation}
		- \int_0^\infty \ell(x)\ \mathrm{sign}(f(x)) f''(x)\ \mathrm{d}x \ge \int_0^\infty \ell'(x)\ |f|'(x)\ \mathrm{d}x\,. \label{E.0}
	\end{equation}
\end{lemma}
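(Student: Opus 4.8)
The statement is a weighted, one‑dimensional version of Kato's inequality on the half‑line, and the plan is the classical regularisation argument: smooth out $|\cdot|$, integrate by parts on a bounded subinterval $[\delta,R]\subset(0,\infty)$ where $f$ is of class $C^1$ and every manipulation is elementary, discard a sign‑definite term coming from convexity, and only at the end let $R\to\infty$ and $\delta\to0^+$. Concretely, I would fix a family $(j_\varepsilon)_{\varepsilon>0}$ of convex functions in $C^2(\RR)$ with $0\le j_\varepsilon\le|\cdot|$, $|j_\varepsilon'|\le1$, $j_\varepsilon''\ge0$, and $j_\varepsilon'(s)\to\mathrm{sign}(s)$ for every $s\in\RR$ as $\varepsilon\to0$ (e.g. $j_\varepsilon(s):=\sqrt{s^2+\varepsilon^2}-\varepsilon$). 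Since $f\in W_{1,loc}^2(0,\infty)$ is $C^1$ on $[\delta,R]$ and $\ell\in W_{\infty,loc}^1([0,\infty))$, the factors $\ell$, $j_\varepsilon'(f)$ and $f'$ all belong to $W^{1,1}(\delta,R)$, so an integration by parts is licit and yields
\[
\int_\delta^R \ell\,j_\varepsilon'(f)\,f''\,\mathrm{d}x=\big[\ell\,j_\varepsilon'(f)\,f'\big]_\delta^R-\int_\delta^R \ell'\,j_\varepsilon'(f)\,f'\,\mathrm{d}x-\int_\delta^R \ell\,j_\varepsilon''(f)\,(f')^2\,\mathrm{d}x\,.
\]
As $\ell\ge0$ and $j_\varepsilon''\ge0$, the last integral is nonnegative and can be dropped, giving
\[
-\int_\delta^R \ell\,j_\varepsilon'(f)\,f''\,\mathrm{d}x\ge-\ell(R)\,j_\varepsilon'(f(R))\,f'(R)+\ell(\delta)\,j_\varepsilon'(f(\delta))\,f'(\delta)+\int_\delta^R \ell'\,j_\varepsilon'(f)\,f'\,\mathrm{d}x\,.
\]

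Next I would let $\varepsilon\to0$ with $\delta$ and $R$ fixed. Since $j_\varepsilon'(f)\to\mathrm{sign}(f)$ pointwise with $|j_\varepsilon'(f)|\le1$, while $\ell|f''|$ and $\ell'|f'|$ belong to $L_1(0,\infty)$ by the weighted integrability hypotheses, dominated convergence handles the two integrals over $[\delta,R]$; moreover $j_\varepsilon'(f)f'\to\mathrm{sign}(f)f'=|f|'$ almost everywhere, because $f'=0$ a.e. on $\{f=0\}$. Evaluating the boundary factors pointwise at $\delta$ and $R$, this produces
\[
-\int_\delta^R \ell\,\mathrm{sign}(f)\,f''\,\mathrm{d}x\ge-\ell(R)\,\mathrm{sign}(f(R))\,f'(R)+\ell(\delta)\,\mathrm{sign}(f(\delta))\,f'(\delta)+\int_\delta^R \ell'\,|f|'\,\mathrm{d}x\,.
\]

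Finally I would remove the cut‑offs. The two integrals converge to their counterparts over $(0,\infty)$ by dominated convergence as $R\to\infty$ and $\delta\to0^+$, so it only remains to show that the boundary terms vanish. Here I would use that $(\ell f')'=\ell' f'+\ell f''\in L_1(0,\infty)$ by the two weighted integrability assumptions, so $\ell f'$ is absolutely continuous on $(0,\infty)$ with finite limits $L_0:=\lim_{x\to0^+}(\ell f')(x)$ and $L_\infty:=\lim_{x\to\infty}(\ell f')(x)$; since $|\ell(x)\,\mathrm{sign}(f(x))\,f'(x)|=\ell(x)|f'(x)|=|(\ell f')(x)|$, the boundary terms tend in absolute value to $|L_\infty|$ and $|L_0|$. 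If $L_\infty\ne0$ then $|f'(x)|\ge c\,\ell(x)^{-1}$ for $x$ large and some $c>0$, whence $\int^\infty\ell'(x)|f'(x)|\,\mathrm{d}x\ge c\int^\infty(\ell'/\ell)(x)\,\mathrm{d}x$, which diverges because $\ell(x)\to\infty$; this contradicts $f'\in L_1((0,\infty),\ell'\,\mathrm{d}x)$, so $L_\infty=0$, and the symmetric argument at the origin (using $\ell(0^+)=0$) gives $L_0=0$. Both boundary contributions therefore vanish in the limit and \eqref{E.0} follows.

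The routine parts are the convexity cancellation and the limit $\varepsilon\to0$; the delicate point is the last step, the vanishing of the boundary terms, which is precisely where the hypotheses $f'\in L_1((0,\infty),\ell'\,\mathrm{d}x)$ and $f''\in L_1((0,\infty),\ell\,\mathrm{d}x)$ enter, together with the endpoint behaviour of $\ell$ (namely $\ell(0^+)=0$ and $\ell(x)\to\infty$, which holds for the weights $\ell(x)=x$, $x^m$ and $x+x^m$ relevant later). An alternative to the first two steps is to invoke directly the distributional inequality $|f|''\ge\mathrm{sign}(f)f''$, valid for $f\in W_{1,loc}^2(0,\infty)$ with defect a nonnegative measure supported on the sign changes of $f$, pair it against $\ell\ge0$, and integrate by parts once; the same boundary analysis would still be required.
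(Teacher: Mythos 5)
Your regularize--integrate by parts--drop the convexity term--pass to the limit scheme is exactly the paper's; the paper uses the Lipschitz truncation $\beta_\varepsilon$ with $\beta_\varepsilon'(r)=r/\varepsilon$ on $[-\varepsilon,\varepsilon]$ where you use the $C^2$ approximation $j_\varepsilon(s)=\sqrt{s^2+\varepsilon^2}-\varepsilon$, which is a cosmetic difference. The substantive divergence is that the paper integrates by parts directly over $(0,\infty)$ and never discusses boundary terms, whereas you work on $[\delta,R]$ and then have to show the boundary contributions vanish; you correctly identify this as the delicate step.

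The difficulty is that your boundary-term analysis imports properties of $\ell$ that are not among the lemma's hypotheses, and that do not hold uniformly across the places the lemma is invoked. At infinity you argue via the divergence of $\int^\infty(\ell'/\ell)\,\mathrm{d}x$, which requires $\ell'\ge 0$ and $\ell(x)\to\infty$; but in Step~2 of the proof of \Cref{L.2.5} the lemma is applied with $\ell(x)=(x\wedge R)^m$, which is eventually constant, so $\ell'$ has compact support and $\int^\infty\ell'/\ell$ is finite. In that application the boundary term at infinity does vanish, but for a different reason specific to the $f$ at hand (there $f,f''\in L_1(0,\infty)$, so $f'(x)\to 0$), not for the structural reason you give. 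At the origin you invoke $\ell(0^+)=0$: you are right that something of this sort is indispensable --- with $\ell\equiv 1$ and $f(x)=e^{-x}$ the left side of \eqref{E.0} is $-1$ and the right side is $0$ --- but it is not in the hypotheses of the lemma as stated, and your closing list of ``relevant weights'' omits the truncated weights $(x\wedge R)^m$ and $x^m\chi_R(x)$ that the paper actually uses. So the regularization and convexity parts of your proposal are fine and match the paper; the boundary-term part, which is exactly what the paper leaves implicit, cannot be carried out purely from the stated hypotheses and needs either additional assumptions on $\ell$ (and, for the bounded-$\ell$ case, on $f$) or a case-by-case verification in each application.
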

	
	\begin{proof}
		For $\varepsilon\in (0,1)$, we define $\beta_\varepsilon\in W_{\infty,loc}^2(\mathbb{R})$ by $\beta_\varepsilon(0)=0$ and 
		\begin{equation*}
			\beta_\varepsilon'(r) := \mathrm{sign}(r)\,, \quad |r|>\varepsilon\,, \;\;\text{ and }\;\; \beta_\varepsilon'(r) := \frac{r}{\varepsilon}\,, \quad r\in [-\varepsilon,\varepsilon]\,.
		\end{equation*}
		Since $\beta_\varepsilon''\ge 0$ a.e. in $\mathbb{R}$, integration by parts gives
		\begin{align*}
			- \int_0^\infty \ell(x)\ \beta_\varepsilon'(f(x)) f''(x)\ \mathrm{d}x & = \int_0^\infty \left[ \ell(x)\ \beta_\varepsilon''(f(x)) |f'(x)|^2 + \ell'(x) \beta_\varepsilon'(f(x)) f'(x) \right]\ \mathrm{d}x \\
			& \ge \int_0^\infty \ell'(x) [\beta_\varepsilon(f)]'(x)\ \mathrm{d}x\,.
		\end{align*}
		Since $|\beta_\varepsilon(r)-r|\le \varepsilon$ for $r\in\mathbb{R}$ and $\beta_\varepsilon'$ converges pointwise to the $\mathrm{sign}$ function in $\mathbb{R}$ as $\varepsilon \to 0$, we may pass to the limit as $\varepsilon \to 0$ in the previous inequality to complete the proof.
	\end{proof}
	
	\section{The Heat Semigroup} \label{S.2.1}
	
	It is well-known that the solution to the heat equation with homogeneous Dirichlet boundary conditions at $x=0$,
	\begin{align*}
		\partial_t w - \partial_x^2 w & = 0\,, \qquad (t,x)\in (0,\infty)\times (0,\infty)\,, \\
		w(t,0) & = 0\,, \qquad t\in (0,\infty)\,, \\
		w(0,x) & = f(x)\,, \qquad x\in (0,\infty)\,,
	\end{align*}
	is given by the representation formula
	\begin{equation}
		w(t,x) = W(t)f(x) :=  \int_0^\infty [k(t,x-y)-k(t,x+y)] f(y)\ \mathrm{d}y\,, \qquad (t,x)\in (0,\infty)^2\,, \label{E.1}
	\end{equation}
	where
	\begin{equation}
		k(t,x):=\frac{1}{\sqrt{4\pi t}} e^{-\vert x\vert^2/4t}\,\qquad (t,x)\in (0,\infty)\times \mathbb{R}\,.\label{E.2}
	\end{equation}
	Moreover, $(W(t))_{t\ge 0}$ (with $W(0):=\mathrm{id}_{L_1(0,\infty)}$) is a positive analytic semigroup of contractions on $L_1(0,\infty)$ with generator $G$ given by
	\begin{equation*}
		\begin{split}
			\dom(G) & := \{ f\in L_1(0,\infty)\ : \ f''\in L_1(0,\infty)\;\text{ and }\; f(0) = 0 \}\,, \\
			G f & := f''\,, \qquad f\in \dom(G)\,.
		\end{split}
	\end{equation*}
	That is, $G\in\mathcal{G}_+(L_1(0,\infty),1,0)\cap \mathcal{H}(L_1(0,\infty))$ and $e^{tG}=W(t)$ for $t\ge 0$. We now consider this semigroup in the weighted space $X_{1,m}$. More precisely, for $m\ge 1$, we recall the definition \eqref{A.1} (with $a=0$) of the unbounded operator $A_{0,m}$ on $X_{1,m}$, given by
	\begin{equation}
		\begin{split}
			\dom(A_{0,m}) & = \{ f\in X_{1,m}\ : \ f''\in X_{1,m}\;\text{ and }\; f(0) = 0 \}\,, \\
			A_{0,m} f & = f''\,, \qquad f\in \dom(A_{0,m})\,,
		\end{split} \label{E.2.5}
	\end{equation}
and show that it is the generator of the heat semigroup in $X_{1,m}$.
	
	\begin{proposition}\label{P.2.4}
		Let $m\ge 1$. There is $\omega_m\ge 0$ such $A_{0,m}\in \mathcal{G}_+(X_{1,m},1,\omega_m)\cap \mathcal{H}(X_{1,m})$ with $(0,\infty)\subset \rho(A_{0,m})$. The semigroup $\left( e^{tA_{0,m}} \right)_{t\ge 0}$ is given by
		\begin{equation}
			e^{tA_{0,m}}f (x) = \int_0^\infty [k(t,x-y)-k(t,x+y)] f(y)\ \mathrm{d}y\,, \qquad (t,x)\in (0,\infty)\times (0,\infty)\,, \label{Rep}
		\end{equation}
		for $f\in X_{1,m}$, where $k$ is defined in \eqref{E.2}. Moreover, $e^{tA_{0,m}} = e^{tA_{0,1}}|_{X_{1,m}}$ for $t\ge 0$ and 
		\begin{equation*}
			\left( \lambda - A_{0,m} \right)^{-1} = \left( \lambda - A_{0,1} \right)^{-1}|_{X_{1,m}}\,, \qquad \lambda>0\,.
		\end{equation*}
	\end{proposition}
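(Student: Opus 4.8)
The plan is to establish \Cref{P.2.4} by transporting the well-known properties of the heat semigroup $(W(t))_{t\ge 0}$ on $L_1(0,\infty)$ to the weighted scale $X_{1,m}$ via the explicit representation formula \eqref{E.1}--\eqref{E.2}. The first step is to verify that the operators defined by the right-hand side of \eqref{Rep} are bounded on $X_{1,m}$ with the asserted exponential bound. Using the odd reflection structure, one has $e^{tA_{0,m}}f = W(t)\tilde f\big|_{(0,\infty)}$ where $\tilde f$ is the odd extension of $f$ to $\RR$; then the moment estimate reduces to controlling $\int_{\RR} |x|^m (k(t,\cdot) * \tilde f)(x)\,\mathrm dx$ and $\int_{\RR} |x| (k(t,\cdot)*\tilde f)(x)\,\mathrm dx$. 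Here one exploits the elementary pointwise bound $|x|^m \le c_m(|x-y|^m + |y|^m)$ together with $\int_{\RR}|z|^m k(t,z)\,\mathrm dz = C_m t^{m/2}$, which gives $\|k(t,\cdot)*g\|_{X_m}\le \|g\|_{X_m} + C_m t^{m/2}\|g\|_{L_1}$ for $g\in L_1\cap X_m$; combined with $\|k(t,\cdot)*g\|_{L_1}\le \|g\|_{L_1}$ and the case $m=1$, a Young-type argument yields $\|e^{tA_{0,m}}f\|_{X_{1,m}} \le e^{\omega_m t}\|f\|_{X_{1,m}}$ for a suitable $\omega_m\ge 0$. (Since $t^{m/2}$ grows only polynomially, any $\omega_m>0$ works, and for $m=1$ one recovers the contraction estimate, i.e.\ $\omega_1=0$.)

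Next I would check the semigroup property and strong continuity. The semigroup law $e^{(t+s)A_{0,m}} = e^{tA_{0,m}}e^{sA_{0,m}}$ is inherited from that of $(W(t))_{t\ge 0}$ on $L_1(0,\infty)$, since the formula \eqref{Rep} is the restriction of $W(t)$ to the dense subspace $X_{1,m}\subset L_1(0,\infty)$ and the relevant integrals are absolutely convergent. Strong continuity at $t=0$ follows from density: for $f\in C_c^\infty((0,\infty))$, which is dense in $X_{1,m}$ by \Cref{L.2.2}, one checks directly that $e^{tA_{0,m}}f \to f$ in $X_{1,m}$ as $t\downarrow 0$ (the support of $f$ being compact, the weights $x$ and $x^m$ are harmless and one may use the $L_1$-continuity together with the moment bound just derived and an $\varepsilon/3$-argument, uniform boundedness of $\|e^{tA_{0,m}}\|$ on $[0,1]$ doing the rest for general $f$). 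Positivity of the semigroup is immediate from \eqref{Rep} because $k(t,x-y)\ge k(t,x+y)$ for $x,y>0$, so $e^{tA_{0,m}}$ maps $X_{1,m}^+$ into itself; hence $A_{0,m}\in\mathcal G_+(X_{1,m},1,\omega_m)$ once the generator is identified. The consistency statements $e^{tA_{0,m}}=e^{tA_{0,1}}|_{X_{1,m}}$ and the corresponding resolvent identity then follow because both sides are given by the same integral kernel and agree on the dense set $C_c^\infty((0,\infty))$, together with the Laplace-transform representation $(\lambda - A_{0,m})^{-1} = \int_0^\infty e^{-\lambda t} e^{tA_{0,m}}\,\mathrm dt$ valid for $\lambda>\omega_m$.

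The main technical point will be to identify the generator of $(e^{tA_{0,m}})_{t\ge 0}$ with the operator $A_{0,m}$ of \eqref{E.2.5} and to establish analyticity together with $(0,\infty)\subset\rho(A_{0,m})$. Let $G_m$ denote the generator a priori. One inclusion is easy: for $f\in C_c^\infty((0,\infty))$ one differentiates \eqref{Rep} under the integral to see that $\frac{\mathrm d}{\mathrm dt}e^{tA_{0,m}}f\big|_{t=0} = f''$, so $C_c^\infty((0,\infty))\subset\dom(G_m)$ with $G_m f = f''$, and since this core already determines things on the $L_1$-side, $G_m\subset A_{0,m}$ once we show $\dom(G_m)\subset\dom(A_{0,m})$. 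For the reverse inclusion, given $f\in\dom(A_{0,m})$ we have $f,f''\in X_{1,m}\subset L_1(0,\infty)$ and $f(0)=0$, hence $f\in\dom(G)$ on $L_1$, so $g:=(\lambda - A_{0,m})f = (\lambda - G)f\in X_{1,m}$ for $\lambda>0$; it then suffices to prove that $(\lambda - G)^{-1}$ maps $X_{1,m}$ boundedly into $X_{1,m}$, for then $f = (\lambda - G)^{-1}g\in\dom(G_m)$. This mapping property is where the weighted estimates are genuinely needed: one writes $(\lambda - G)^{-1} = \int_0^\infty e^{-\lambda t}W(t)\,\mathrm dt$, applies the moment bound $\|W(t)g\|_{X_m}\le \|g\|_{X_m} + C_m t^{m/2}\|g\|_{L_1}$ on the odd extension, and integrates against $e^{-\lambda t}$ to get a finite bound for every $\lambda>0$; this simultaneously yields $(0,\infty)\subset\rho(A_{0,m})$ and the resolvent consistency. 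Finally, analyticity of $(e^{tA_{0,m}})_{t\ge 0}$ is obtained by the same scheme applied on a sector: the complex kernel $k(z,\cdot)$ for $z$ in a sector $|\arg z|<\pi/2$ satisfies $\int_{\RR}|z'|^m |k(z,z')|\,\mathrm d z' \le C_m |z|^{m/2}(\cos(\arg z))^{-(m+1)/2}$ and $\int_{\RR}|k(z,z')|\,\mathrm dz'\le (\cos(\arg z))^{-1/2}$, so the bounded-operator estimates extend to a (possibly smaller) sector with the growth $\|e^{zA_{0,m}}\|_{\mathcal L(X_{1,m})}\le M_\theta e^{\omega_m\,\mathrm{Re}\,z}$, which is precisely the sectoriality needed for $A_{0,m}\in\mathcal H(X_{1,m})$. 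The only subtlety to watch here is uniformity of the constants as $\arg z\to\pm\pi/2$, handled by shrinking the half-angle of the sector, exactly as in the unweighted case.
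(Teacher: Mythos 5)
Your overall route --- transporting the explicit Gaussian kernel to the weighted scale, then identifying the generator --- is a reasonable alternative to the paper's Lumer--Phillips-plus-interpolation argument, but as written it has several genuine gaps that all trace back to the same source: the relationship between $X_{1,m}$ and $L_1(0,\infty)$.

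First, the claimed moment bound is incorrect. From $|x|^m\le c_m(|x-y|^m+|y|^m)$ you get $\|k(t,\cdot)*g\|_{X_m}\le c_m\|g\|_{X_m}+c_mC_mt^{m/2}\|g\|_{L_1}$ with $c_m=2^{m-1}>1$, not $\|g\|_{X_m}+C_mt^{m/2}\|g\|_{L_1}$. The leading constant matters for the claim $A_{0,m}\in\mathcal G_+(X_{1,m},1,\omega_m)$. More seriously, the bound involves $\|g\|_{L_1}$, and $X_{1,m}\not\subset L_1(0,\infty)$: the weight $x+x^m$ vanishes as $x\to 0$, so for example $g(x)=x^{-3/2}\mathbf 1_{(0,1)}(x)$ lies in $X_{1,m}$ for every $m\ge 1$ but not in $L_1(0,\infty)$. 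Density of $C_c^\infty$ does not save you here, because the uniform bound you would need to pass to the limit must be expressed in terms of $\|g\|_{X_{1,m}}$ alone. To repair this step you must exploit the structure of the Dirichlet kernel $k(t,x-y)-k(t,x+y)$, which vanishes to first order at $y=0$: equivalently, the odd Gaussian moments $\int_\RR|z|^{m-1}z\,k(t,z)\,\mathrm dz=0$ allow you to replace $\|g\|_{L_1}$ by $\|g\|_{X_1}$ (or lower moments), as in the paper's Step~5 computation. Without this cancellation, the whole-line odd-extension argument does not close.

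Second, in the generator identification you write that for $f\in\dom(A_{0,m})$ one has ``$f,f''\in X_{1,m}\subset L_1(0,\infty)$, hence $f\in\dom(G)$.'' Both clauses fail: the inclusion $X_{1,m}\subset L_1(0,\infty)$ is false, and even after invoking \Cref{L.2.1} (which gives $f\in X_0$) one does \emph{not} obtain $f''\in L_1(0,\infty)$ from $f''\in X_{1,m}$, since the weight degenerates near the origin. So $\dom(A_{0,m})\not\subset\dom(G)$ in general, and the reverse inclusion $\dom(A_{0,m})\subset\dom(G_m)$ is not established this way. The paper sidesteps this entirely by proving the range condition $\mathrm{rg}(\lambda-A_{0,m})=X_{1,m}$ directly inside $X_{1,m}$ (approximation by $C_c^\infty$, Kato's inequality, Fatou), combined with dissipativity to invoke Lumer--Phillips; no passage through $L_1(0,\infty)$ is needed. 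Finally, note that the kernel route, even if repaired, gives the multiplicative constant~$1$ easily only for $m\ge 3$ (where a polynomial expansion keeps the coefficient of $y^m$ equal to~$1$); the range $m\in(1,3)$ requires an interpolation argument between $X_{1,1}$ and $X_{1,3}$, which is exactly how the paper handles it.
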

	
	Two steps are needed to show \Cref{P.2.4}. We first establish \Cref{P.2.4} for $m=1$ and $m\ge 3$, see \Cref{L.2.5} below. An interpolation argument then completes the proof for $m\in (1,3)$.
	
	\begin{lemma}\label{L.2.5}
		Let $m\in \{1\} \cup (3,\infty)$. Then $A_{0,m}\in \mathcal{G}_+(X_{1,m},1,\omega_m)\cap \mathcal{H}(X_{1,m})$ with  
		\begin{equation*}
			\omega_1 := 0 \;\text{ and }\; \omega_m := 4^{1/(m-1)} m (m-3)^{(m-3)/(m-1)}\,, \qquad m\ge 3\,.
		\end{equation*}
		Moreover, $e^{tA_{0,m}} = e^{tA_{0,1}}|_{X_{1,m}}$ for $t\ge 0$, $(0,\infty)\subset \rho(A_{0,m})$, and 
		$$
		(\lambda - A_{0,m})^{-1} = (\lambda-A_{0,1})^{-1}|_{X_{1,m}}\,,\qquad \lambda>0\,.
		$$
	\end{lemma}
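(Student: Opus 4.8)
The plan is to prove \Cref{L.2.5} by establishing the semigroup property and estimates separately for $m=1$ and for $m\ge 3$, and in each case by transferring the known generation result for $G$ on $L_1(0,\infty)$ to the weighted space via an explicit isometry. Concretely, for $m\ge 1$ the map $T_m : X_{1,m} \to L_1(0,\infty)$, $(T_m f)(x) := x f(x)$ when $m=1$, and more generally the multiplication-type correspondence relating $X_m$ to $L_1$, is a lattice isomorphism; but since $X_{1,m}=X_1\cap X_m$ is an intersection, the cleanest route is to work directly with the representation formula \eqref{Rep}. So first I would \emph{define} the operators $S_m(t)$ on $X_{1,m}$ by the right-hand side of \eqref{Rep} (with $S_m(0):=\mathrm{id}$) and check that $S_m(t) = W(t)|_{X_{1,m}}$ maps $X_{1,m}$ into itself. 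The key estimate is the moment bound: using $k(t,\cdot)\ge 0$, the reflection $k(t,x-y)-k(t,x+y)\le k(t,x-y)$ for $x,y>0$, and the explicit Gaussian moments
\begin{equation*}
	\int_0^\infty x^m k(t,x-y)\ \mathrm{d}x \le \int_{-\infty}^\infty (|y| + |z|)^m k(t,z)\ \mathrm{d}z \le C_m\big( y^m + t^{m/2} \big)\,,
\end{equation*}
one obtains, after multiplying by $|f(y)|$ and integrating, a bound of the form $\|S_m(t)f\|_{X_m} \le \|f\|_{X_m} + c_m t^{m/2} \|f\|_{L_1}$ (the leading term has coefficient $1$ because $(y+|z|)^m$ expands with leading term $y^m$ and the $k(t,\cdot)$-integral of $y^m$ is exactly $y^m$, the lower-order terms producing powers of $t^{1/2}$). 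Combining with the contraction bound on $X_1$ coming from $W(t)$ being a contraction on $L_1((0,\infty),x\,\mathrm{d}x)$ (which is exactly the no-flux statement already recorded), this is not yet an exponential bound, so the correct statement must instead be derived from a differential inequality.

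Thus the heart of the argument is the \emph{a priori moment estimate} via Kato's inequality, \Cref{L.2.3}. For $f\in\dom(A_{0,m})$ set $u(t):=e^{tA_{0,1}}f$ and test the equation $\partial_t u = u''$ against $x^m\,\mathrm{sign}(u)$; applying \Cref{L.2.3} with $\ell(x)=x^m$ (and $\ell(x)=x$) gives
\begin{equation*}
	\frac{\mathrm{d}}{\mathrm{d}t}\|u(t)\|_{X_m} = \int_0^\infty x^m\,\mathrm{sign}(u)\,u''\ \mathrm{d}x \le -m\int_0^\infty x^{m-1}|u|'(x)\ \mathrm{d}x = m(m-1)\int_0^\infty x^{m-2}|u(x)|\ \mathrm{d}x\,,
\end{equation*}
after one more integration by parts justified by the decay of $|u|$ at $0$ and $\infty$ from \Cref{L.2.1}. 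For $m\ge 3$ we interpolate $x^{m-2}$ between $x^1$ and $x^m$: since $m-2 = \theta\cdot 1 + (1-\theta)\cdot m$ with $\theta = 2/(m-1)$, Young's inequality yields $\int x^{m-2}|u| \le \varepsilon \|u\|_{X_m} + C(\varepsilon)\|u\|_{X_1}$ for any $\varepsilon>0$; choosing $\varepsilon$ and optimizing the constant produces exactly $\omega_m = 4^{1/(m-1)} m (m-3)^{(m-3)/(m-1)}$, while the $X_1$-norm is nonincreasing (the $m=1$ case, where the right-hand side is $0$, giving $\omega_1=0$). This delivers $\|e^{tA_{0,m}}f\|_{X_{1,m}}\le e^{\omega_m t}\|f\|_{X_{1,m}}$ on the dense subspace $\dom(A_{0,m})\supset C_c^\infty((0,\infty))$ (\Cref{L.2.2}), hence on all of $X_{1,m}$; together with strong continuity (which follows from that of $W(t)$ on $L_1$ plus the uniform bound and density) we get $(S_m(t))_{t\ge0}$ is a $C_0$-semigroup on $X_{1,m}$ with $\|S_m(t)\|\le e^{\omega_m t}$, and positivity is inherited from \eqref{Rep}. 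Denoting its generator $\hat A_m$, one has $\hat A_m \subset A_{0,m}$ and both are closed; the resolvent identity $(\lambda - \hat A_m)^{-1} = (\lambda - A_{0,1})^{-1}|_{X_{1,m}}$ for $\lambda$ large shows the resolvent of $A_{0,1}$ leaves $X_{1,m}$ invariant, and a standard argument (range and injectivity of $\lambda - A_{0,m}$) upgrades this to $\hat A_m = A_{0,m}$ with $(0,\infty)\subset\rho(A_{0,m})$.

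It remains to see analyticity. Here the cleanest option is to note that $G\in\mathcal{H}(L_1(0,\infty))$ means $W(t)$ extends analytically to a sector and satisfies $\|W(t)\|_{\mathcal L}+\|tGW(t)\|_{\mathcal L}\le C$ for $t$ in a (smaller) sector; since $GW(t)f = \partial_x^2 W(t)f$ acts again by a convolution-type kernel (the $x$-derivatives of $k$), the same Gaussian-moment computation as above, now applied to $\partial_x^2 k(t,\cdot)$ whose $X_m$- and $L_1$-mass against $|f|$ is controlled by $t^{-1}(\|f\|_{X_m} + t^{m/2}\|f\|_{L_1})$ type bounds, gives $\|t A_{0,m} e^{tA_{0,m}}\|_{\mathcal L(X_{1,m})}\le C_m$ for small $t>0$, which combined with the exponential bound yields $A_{0,m}\in\mathcal H(X_{1,m})$. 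For $m=1$ one may alternatively invoke that $A_{0,1}$ is, via the isometry $f\mapsto xf$, similar to the Dirichlet Laplacian acting on an $L_1$-space with a bounded (first-order) perturbation — but the kernel estimate is self-contained and avoids identifying the conjugated operator. I expect the main obstacle to be the bookkeeping in the kernel estimates: getting the leading coefficient in the $X_m$-bound to be genuinely $1$ (not merely $O(1)$), so that the exponential rate comes only from the lower-order interpolation term and one recovers the sharp constant $\omega_m$, and likewise justifying the integrations by parts and the application of \Cref{L.2.3} at the level of $C_c^\infty$ before passing to the limit.
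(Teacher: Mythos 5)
Your generation argument follows a genuinely different route from the paper's. The paper proves dissipativity of $A_{0,m}-\omega_m$ and a range condition (both via Kato's inequality applied to the \emph{resolvent} equation $\lambda f - f'' = g$), then invokes Lumer--Phillips, and only afterwards identifies the semigroup with the heat kernel. You instead work with the semigroup directly: define $S_m(t)$ by the kernel formula, derive the exponential bound from a \emph{time}-differential inequality $\frac{\mathrm d}{\mathrm dt}\|u(t)\|_{X_m}\le m(m-1)\|u(t)\|_{X_{m-2}}$ (again via Kato), extend by density, and identify the generator afterwards. Both routes hinge on the same $X_{m-2}$-interpolation by Young's inequality and produce the same sharp $\omega_m$. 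The paper's elliptic-resolvent version is somewhat cleaner because one never needs to justify differentiating the non-smooth norm $t\mapsto\|u(t)\|_{X_m}$ or check that $W(t)f\in\dom(A_{0,m})$ for all $t>0$; you should make explicit that you start on $C_c^\infty$, where the Gaussian kernel gives all the regularity and decay in $x$, before passing to the limit. Also, the identification $\hat A_m=A_{0,m}$ that you leave as a ``standard argument'' does need the injectivity of $\lambda-A_{0,m}$ for $\lambda>0$, which in turn must come from Kato's inequality with $\ell(x)=x$ --- so you do not in fact avoid the paper's dissipativity estimate entirely, you just use it in a lighter form.

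There is however a genuine gap in your analyticity argument. You propose to estimate $\|tA_{0,m}e^{tA_{0,m}}f\|_{X_{1,m}}$ by bounding the kernel via $k(t,x-y)-k(t,x+y)\le k(t,x-y)$ and then expanding $(|y|+|z|)^m$, which produces a term $t^{m/2}\|f\|_{L_1(0,\infty)}$ (and, for the same reason, lower-order terms $t^{(m-r)/2}\|f\|_{X_r}$ with $0\le r<1$). But $X_{1,m}=X_1\cap X_m$ does \emph{not} embed into $L_1(0,\infty)$ --- the weight $x$ allows blowup near the origin (e.g.\ $f(x)\sim x^{-3/2}\mathbf{1}_{(0,1)}(x)$ lies in $X_{1,m}$ but not in $L_1(0,\infty)$) --- so $\|f\|_{L_1(0,\infty)}$ is not controlled by $\|f\|_{X_{1,m}}$ and your claimed operator-norm bound $\|tA_{0,m}e^{tA_{0,m}}\|_{\mathcal L(X_{1,m})}\le C_m$ does not follow. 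The fix, which is what the paper's Step~5 does, is to retain the \emph{full} reflection difference $k(t,x-y)-k(t,x+y)$ inside the moment integral: this difference vanishes linearly at $y=0$, and the change of variables $x=y+2z\sqrt t$ turns $\int_0^\infty x^m\bigl[k(t,x-y)-k(t,x+y)\bigr]\,\mathrm dx$ into $\pi^{-1/2}\int_{\mathbb R}|y+2z\sqrt t|^{m-1}(y+2z\sqrt t)e^{-z^2}\,\mathrm dz$, which for each fixed $t$ is dominated by a sum of $y^r$ with $r\in[1,m]$ only. This is exactly the cancellation you already noted was important for proving that $W(t)$ is a contraction on $X_1$, and it must be preserved again here. (The paper additionally uses the pointwise positivity $(1+|x-y|^2/4t)k(t,x-y)-(1+|x+y|^2/4t)k(t,x+y)\ge 0$ to justify Fubini--Tonelli and perform the same change of variables with the $(1+z^2)$ factor.) With that repair your plan matches the paper's Step~5 and the lemma is proved.
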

	
	\begin{proof}
		We first note that $A_{0,m}$ is a closed operator on $X_{1,m}$ and that its domain is dense in $X_{1,m}$ due to \Cref{L.2.1} and \Cref{L.2.2}. We divide the remainder of the proof into several steps.
		
		\medskip
		
		\noindent\textbf{Step~1.} We first show the dissipativity of $A_{0,m}-\omega_m$ on $X_{1,m}$. To this end, let $\lambda>0$ and $f\in \dom(A_{0,m})$. By Kato's inequality \eqref{E.0} (with $\ell(x) = x^m$), \Cref{L.2.1}, and the boundary condition $f(0)=0$,
		\begin{align*}
			\|\lambda f - (A_{0,m}-\omega_m)f\|_{X_m} & \ge \int_0^\infty x^m\ \mathrm{sign}(f(x)) [(\lambda+\omega_m) f(x)-f''(x)]\ \mathrm{d}x \\
			& \ge (\lambda+\omega_m) \|f\|_{X_m} + m \int_0^\infty x^{m-1}\ |f|'(x)\ \mathrm{d}x \\
			& = (\lambda+\omega_m) \|f\|_{X_m} - m(m-1) \|f\|_{X_{m-2}}\,.
		\end{align*}
		In particular, when $m=1$,
		\begin{equation}
			\| \lambda f - A_{0,1}f\|_{X_1} \ge \lambda \|f\|_{X_1}\,, \label{E.7}
		\end{equation}
		so that $A_{0,1}$ is dissipative on $X_1$. We next handle the case $m\ge 3$. Then $m-2\in [1,m)$ and we infer from Young's inequality that, for $\varepsilon>0$,
		\begin{equation*}
			m(m-1) \|f\|_{X_{m-2}} \le m (m-3) \varepsilon \|f\|_{X_m} + 2m \varepsilon^{(3-m)/2} \|f\|_{X_1}\,.
		\end{equation*}
		Hence,
		\begin{equation*}
			(\lambda+\omega_m) \|f\|_{X_m} \le m (m-3) \varepsilon \|f\|_{X_m} + 2m \varepsilon^{(3-m)/2} \|f\|_{X_1} + \|\lambda f - (A_{0,m}-\omega_m)f\|_{X_m}\,.
		\end{equation*}
		Combining \eqref{E.7} with this inequality gives
		\begin{align*}
			(\lambda+\omega_m)  \|f\|_{X_{1,m}} & \le \|\lambda f - (A_{0,m}-\omega_m) f\|_{X_{1,m}}  \\
			& \qquad + m (m-3) \varepsilon \|f\|_{X_m} + 2m \varepsilon^{(3-m)/2} \|f\|_{X_1}\,.
		\end{align*}
		We now choose $\varepsilon = \varepsilon_m := (2/(m-3))^{2/(m-1)}$. Since
		\begin{equation*}
			\omega_m = m (m-3) \varepsilon_m = m \varepsilon_m^{(3-m)/2}\,,
		\end{equation*}
		we readily conclude that 
		\begin{equation*}
			\lambda  \|f\|_{X_{1,m}}  \le \|\lambda f - (A_{0,m}-\omega_m ) f\|_{X_{1,m}} \,,
		\end{equation*}
		so that $A_{0,m}-\omega_m I$ is a dissipative operator on $X_{1,m}$. 
		
		\medskip
	

\noindent\textbf{Step~2.} We next show that $\mathrm{rg}(\lambda - A_{0,m})=X_{1,m}$ for $\lambda>0$. Consider $g\in X_{1,m}$. According to \Cref{L.2.2}, there is a sequence $(g_n)_{n\ge 1}$ in $C_c^\infty((0,\infty))$ such that 
		\begin{equation}
			\lim_{n\to\infty} \|g_n-g\|_{X_{1,m}} = 0\,. \label{E.3}
		\end{equation}
Since $g_n\in L_1(0,\infty)$ and $(0,\infty)\subset \rho(G)$, there is a unique $f_n\in \dom(G)$ such that $\lambda f_n - G f_n = g_n$; that is, 
		\begin{equation}
			\lambda f_n - f_n'' = g_n \;\;\text{ in }\;\; (0,\infty)\,, \qquad f_n(0)=0\,. \label{E.4}
		\end{equation} 
Now, let $R>1$. We multiply \eqref{E.4} by $(x\wedge R)^m\ \mathrm{sign}(f_n(x))$  and integrate over $(0,\infty)$. Using Kato's inequality \eqref{E.0} (with $\ell(x)= (x\wedge R)^m$), we obtain
		\begin{align*}
			\|g_n\|_{X_m} & \ge \int_0^\infty (x\wedge R)^m\ \mathrm{sign}(f_n(x)) [\lambda f_n(x)-f_n''(x)]\ \mathrm{d}x \\
			& \ge \lambda \int_0^\infty (x\wedge R)^m\ |f_n(x)|\ \mathrm{d}x + m \int_0^R x^{m-1} |f_n|'(x)\ \mathrm{d}x \\
			& = \lambda \int_0^\infty (x\wedge R)^m\ |f_n(x)|\ \mathrm{d}x - m(m-1) \int_0^R x^{m-2}\ f_n(x)\ \mathrm{d}x \,.
		\end{align*}
In particular, for $m=1$ we get
		\begin{equation*}
			\|g_n\|_{X_1}  \ge \lambda \int_0^\infty (x\wedge R)\ |f_n(x)|\ \mathrm{d}x  \,,
		\end{equation*}
so that, letting $R\to \infty$ and using Fatou's lemma,		
		\begin{equation}\label{E.91}
			\lambda \|f_n\|_{X_1} \le \|g_n\|_{X_1}\,. 
		\end{equation}
	The same argument entails that
	\begin{equation}\label{E.91a}
			\lambda \|f_n-f_l\|_{X_1} \le \|g_n-g_l\|_{X_1}\,, \qquad n, l\ge 1\,.
		\end{equation}
If $m\ge 3$, then $m-2\in [1,m)$,  and we use  Young's inequality  to deduce that, for $\varepsilon>0$, 
		\begin{align*}
			\lambda \int_0^\infty (x\wedge R)^m\ |f_n(x)|\ \mathrm{d}x & \le \|g_n\|_{X_m} + m(m-3)\varepsilon \int_0^R x^m\ |f_n(x)|\ \mathrm{d}x\\
			& \qquad + 2m \varepsilon^{(3-m)/2} \int_0^R x\ |f_n(x)|\ \mathrm{d}x \\
			& \le \|g_n\|_{X_m} + m(m-3)\varepsilon \int_0^\infty (x\wedge R)^m\ |f_n(x)|\ \mathrm{d}x\\
			& \qquad + 2m \varepsilon^{(3-m)/2} \|f_n\|_{X_1}\,.
		\end{align*}
		Choosing $\varepsilon = \lambda/(2m(m-3))$, we combine \eqref{E.91} and the above inequality to conclude that
		\begin{equation*}
			\frac{\lambda}{2} \int_0^\infty (x\wedge R)^m\ |f_n(x)|\ \mathrm{d}x \le \|g_n\|_{X_m} + \frac{2m}{\lambda} \left( \frac{2m(m-3)}{\lambda} \right)^{(m-3)/2} \|g_n\|_{X_1}\,.
		\end{equation*}
		We now let $R\to\infty$ in this inequality and deduce from Fatou's lemma that $f_n\in X_m$ with 
		\begin{equation}\label{ui}
			\lambda \|f_n\|_{X_m} \le 2 \|g_n\|_{X_m} + \frac{4m}{\lambda} \left( \frac{2m(m-3)}{\lambda} \right)^{(m-3)/2} \|g_n\|_{X_1}\,.
		\end{equation}
The same argument entails that 
\begin{equation}\label{uii}
			\lambda \|f_n-f_l\|_{X_m} \le 2 \|g_n-g_l\|_{X_m} + \frac{4m}{\lambda} \left( \frac{2m(m-3)}{\lambda} \right)^{(m-3)/2} \|g_n-g_l\|_{X_1}
		\end{equation}
for $n\ge 1$ and $l\ge 1$. 
Therefore, for $m\in \{1\}\cup [3,\infty)$,  the estimates \eqref{E.91a} and \eqref{uii} along with \eqref{E.3} guarantee that $(f_n)_{n\ge 1}$ is a Cauchy sequence in $X_{1,m}$ and that there is $f\in X_{1,m}$ such that
		\begin{equation}
			\lim_{n\to\infty} \|f_n-f\|_{X_{1,m}} = 0\,. \label{E.5}
		\end{equation}
Since $f_n''=\lambda f_n-g_n$ for all $n\ge 1$ by \eqref{E.4}, it readily follows from \eqref{E.3} and \eqref{E.5} that $(f_n'')_{n\ge 1}$ converges to $\lambda f-g$ in $X_{1,m}$ and to $f''$ in the sense of distributions. 
 Therefore, $f''\in X_{1,m}$, $f''=\lambda f-g$, and $\|f_n''-f''\|_{X_{1,m}}\to 0$ as $n\to\infty$. 
Finally, by \eqref{P.1c},
		\begin{align*}
			|f(0)| & = |f(0)-f_n(0)| \le \int_0^\infty |(f'-f_n')(x)|\ \mathrm{d}x \\
			& \le \int_0^\infty \int_x^\infty |(f''-f_n'')(y)|\ \mathrm{d}y\mathrm{d}x = \int_0^\infty y\ |(f''-f_n'')(y)|\ \mathrm{d}y = \|f''-f_n''\|_{X_1}\,,
		\end{align*}
from which we deduce that $f(0)=0$. Consequently, $f\in \dom(A_{0,m})$ and  $\lambda f - A_{0,m} f = g$. Since $\lambda - A_{0,m}$ is one-to-one by \eqref{E.7}, we have thus shown for any $\lambda>0$ that 
\begin{equation}
		\mathrm{rg}(\lambda - A_{0,m})=X_{1,m} \;\text{ and }\; (\lambda-A_{0,m})^{-1}g = (\lambda - A_{0,1})^{-1}g \;\text{ for all }\; g\in X_{1,m}\,. \label{E.10}
\end{equation}

\medskip
		
\noindent\textbf{Step~3.} For $m\in \{1\}\cup [3,\infty)$, we infer from \textbf{Step~1} - \textbf{Step~2} and the Lumer-Phillips theorem \cite[Theorem~1.4.3]{Paz1983} that $A_{0,m}-\omega_m$ belongs to $\mathcal{G}(X_{1,m},1,0)$. Hence, $A_{0,m}\in \mathcal{G}(X_{1,m},1,\omega_m)$. Also, it readily follows from \eqref{E.10} that $(0,\infty)\subset \rho(A_{0,m})$ and $(\lambda - A_{0,m})^{-1} = (\lambda-A_{0,1})^{-1}|_{X_{1,m}}$. In particular, the latter, along with the exponential formula \cite[Theorem~1.8.3]{Paz1983}, entails that $e^{tA_{0,m}} = e^{tA_{0,1}}|_{X_{1,m}}$ for all $t\ge 0$. 
		
\medskip
		
\noindent\textbf{Step~4.} We next derive the representation formulation for $(e^{tA_{0,1}})_{t\ge 0}$. To this end, we first note that, for $t>0$, the operator $W(t)$ defined in \eqref{E.1} is a bounded operator on $X_1$. Indeed, since $|x-y|\le |x+y|=x+y$ for $(x,y)\in (0,\infty)^2$, 
\begin{equation}
		k(t,x-y) - k(t,x+y) =\frac{1}{\sqrt{4\pi t}} \left( e^{-|x-y|^2/4t} - e^{-|x+y|^2/4t} \right)\ge 0\,,\qquad (x,y)\in (0,\infty)^2\,.\label{E.6}
\end{equation}
By Fubini-Tonelli's theorem and \eqref{E.6},
\begin{align*}
	\|W(t)f\|_{X_1} & \le \int_0^\infty x \int_0^\infty [k(t,x-y) - k(t,x+y)] |f(y)|\ \mathrm{d}y\mathrm{d}x \\
	& = \frac{1}{\sqrt{\pi}} \int_0^\infty |f(y)| \left( \int_{-y/2\sqrt{t}}^\infty (y+2z\sqrt{t}) e^{-z^2}\ \mathrm{d}z - \int_{-\infty}^{-y/2\sqrt{t}} (-y-2z\sqrt{t}) e^{-z^2}\ \mathrm{d}z \right)\ \mathrm{d}y \\
	& = \frac{1}{\sqrt{\pi}} \int_0^\infty |f(y)| \left( \int_{-\infty}^\infty (y+2z\sqrt{t}) e^{-z^2}\ \mathrm{d}z \right)\ \mathrm{d}y = \|f\|_{X_1}\,.
\end{align*}
Thus, $W(t)$ is a contraction on $X_1$. Since $(\lambda - G)^{-1} = (\lambda  - A_{0,1})^{-1}$ on $L_1(0,\infty)\cap X_1$ for all $\lambda>0$, it follows from the exponential formula, see \cite[Theorem~1.8.3]{Paz1983}, that $e^{tG}=e^{tA_{0,1}}$ on $L_1(0,\infty)\cap X_1$ for all $t\ge 0$. Therefore, $W(t)=e^{tA_{0,1}}$ on $L_1(0,\infty)\cap X_1$ for all $t\ge 0$. Since $L_1(0,\infty)\cap X_1$ is dense in $X_1$ by \Cref{L.2.2} and $e^{tA_{0,1}}$ and $W(t)$ are both bounded operators on $X_1$, we conclude that $e^{tA_{0,1}}=W(t)$ on $X_1$ for all $t\ge 0$. Together with the outcome of \textbf{Step~3}, this identity proves \eqref{Rep}. In particular, $(e^{tA_{0,m}})_{t\ge 0}$ is a positive semigroup according to \eqref{E.6}. 
		
\medskip
		
\noindent\textbf{Step~5.} We are left with showing the analyticity of $(e^{tA_{0,m}})_{t\ge 0}$ on $X_{1,m}$. To this end, let $f\in X_{1,m}$. Clearly, 
\begin{equation*}
	t \longmapsto \int_0^\infty [k(t,x-y)-k(t,x+y)] f(y)\ \mathrm{d}y
\end{equation*}
is differentiable on $(0,\infty)$ and, since
\begin{equation*}
	2t \partial_t k(t,x) = \left( - 1 + \frac{|x|^2}{2t} \right) k(t,x)\,, \qquad (t,x)\in (0,\infty)\times\mathbb{R}\,,
\end{equation*}
we derive for $(t,x)\in (0,\infty)^2$ that
\begin{align*}
	2t\frac{\mathrm{d}}{\mathrm{d}t} e^{tA_{0,m}}f(x) & = - e^{tA_{0,m}}f(x) \\
	& \qquad + 2 \int_0^\infty \left( \frac{|x-y|^2}{4t} k(t,x-y) - \frac{|x+y|^2}{4t} k(t,x+y) \right) f(y)\ \mathrm{d}y \\
	& = - 3 e^{tA_{0,m}}f(x) \\
	& \qquad + 2 \int_0^\infty \left[ \left( 1 + \frac{|x-y|^2}{4t} \right) k(t,x-y) - \left( 1+ \frac{|x+y|^2}{4t} \right) k(t,x+y) \right] f(y)\ \mathrm{d}y\,.
\end{align*}
Let $t>0$. Since $z\mapsto (1+z) e^{-z}$ is non-increasing on $(0,\infty)$ and $|x-y|\le x+y$ for $(x,y)\in (0,\infty)^2$, we see that
\begin{equation}
	\left( 1 + \frac{|x-y|^2}{4t} \right) k(t,x-y) - \left( 1+ \frac{|x+y|^2}{4t} \right) k(t,x+y) \ge 0\,, \qquad (x,y)\in (0,\infty)^2\,, \label{E.6.5.1}
\end{equation}
and infer from Fubini-Tonelli's theorem that
\begin{align*}
	\int_0^\infty x^m & \left| \int_0^\infty \left[ \left( 1 + \frac{|x-y|^2}{4t} \right) k(t,x-y) - \left( 1+ \frac{|x+y|^2}{4t} \right) k(t,x+y) \right] f(y)\ \mathrm{d}y \right|\ \mathrm{d}x \\ 
	&  \le \int_0^\infty x^m \int_0^\infty \left[ \left( 1 + \frac{|x-y|^2}{4t} \right) k(t,x-y) - \left( 1+ \frac{|x+y|^2}{4t} \right) k(t,x+y) \right] |f(y)|\ \mathrm{d}y\mathrm{d}x \\
	&  = \frac{1}{\sqrt{\pi}} \int_0^\infty |f(y)| \left[ \int_{-y/2\sqrt{t}}^\infty \left( y+2z\sqrt{t} \right)^m (1+z^2) e^{-z^2}\ \mathrm{d}z \right. \\ 
	& \hspace{5cm} \left. - \int_{-\infty}^{-y/2\sqrt{t}} \left( -y-2z\sqrt{t} \right)^m (1+z^2) e^{-z^2}\ \mathrm{d}z \right]\ \mathrm{d}y \\
	&  = \frac{1}{\sqrt{\pi}} \int_0^\infty |f(y)| \int_{-\infty}^\infty \left| y+2z\sqrt{t} \right|^{m-1} \left( y+2z\sqrt{t} \right) (1+z^2) e^{-z^2}\ \mathrm{d}z\mathrm{d}y \,.
\end{align*}
Consequently,
\begin{align*}
	2t \left\| \frac{\mathrm{d}}{\mathrm{d}t} e^{tA_{0,m}}f \right\|_{X_{1,m}} & \le 3 \left\| e^{tA_{0,m}}f \right\|_{X_{1,m}} \\
	&  \qquad + \frac{2}{\sqrt{\pi}} \int_0^\infty |f(y)| \int_{-\infty}^\infty  \left( y+2z\sqrt{t} \right) (1+z^2) e^{-z^2}\ \mathrm{d}z\mathrm{d}y \\
	& \qquad + \frac{2}{\sqrt{\pi}} \int_0^\infty |f(y)|   \int_{-\infty}^\infty \left| y+2z\sqrt{t} \right|^{m-1} \left( y+2z\sqrt{t} \right) (1+z^2) e^{-z^2}\ \mathrm{d}z \, \mathrm{d}y
\end{align*}
so that
\begin{equation*}
	\limsup_{t\to 0} t \left\| \frac{\mathrm{d}}{\mathrm{d}t} e^{tA_{0,m}}f \right\|_{X_{1,m}} \le 3 \|f\|_{X_{1,m}}
\end{equation*}
by Lebesgue's convergence theorem. It is well-known that this property implies the analyticity of $(e^{tA_{0,m}})_{t\ge 0}$ on $X_{1,m}$. Thus, the proof is complete.
\end{proof}

The proof of \Cref{P.2.4} is now a consequence of the previous lemma and an interpolation argument as shown next.
	
\begin{proof}[Proof of \Cref{P.2.4}]
We only have to consider the case $m\in (1,3)$. Since $A_{0,1}\in\mathcal{H}( X_{1,1})$ and $A_{0,3}\in\mathcal{H}(X_{1,3})$ according to \Cref{L.2.5}, Hille's characterization implies that there are $\lambda_0>0$ and $\kappa\ge 1$ such that
\begin{equation*}
	\|(\lambda-A_{0,1})^{-1}\|_{\mathcal{L}(X_{1,1})} +\|(\lambda-A_{0,3})^{-1}\|_{\mathcal{L}(X_{1,3})}\le \frac{\kappa}{\vert\lambda-\lambda_0\vert}\,,\quad \mathsf{Re}\, \lambda> \lambda_0\,.
\end{equation*}
Since $(\lambda-A_{0,3})^{-1}=(\lambda-A_{0,1})^{-1}|_{X_{1,3}}$, it readily follows by interpolation that
\begin{equation}\label{M1}
	\|(\lambda-A_{0,1})^{-1}|_{X_{1,m}}\|_{\mathcal{L}(X_{1,m})} \le  \frac{\kappa}{\vert\lambda-\lambda_0\vert}\,,\quad \mathsf{Re}\, \lambda> \lambda_0\,.
\end{equation}
Obviously, $A_{0,m}$ is closed and densely defined in $X_{1,m}$.		Let $g\in X_{1,m}$ be arbitrary and $\mathsf{Re}\, \lambda> \lambda_0$. There is a unique $f\in \dom(A_{0,1})$ such that $(\lambda-A_{0,1})f=g$. By \eqref{M1}, $f=(\lambda-A_{0,1})^{-1}|_{X_{1,m}}g\in X_{1,m}$ and $f''=A_{0,m}f=\lambda f-g\in X_{1,m}$. From this identity, we deduce $f\in \dom(A_{0,m})$ with $A_{0,m} f=A_{0,1}f$ and $(\lambda-A_{0,m})f=g$. Since $f$ is unique, we conclude that $\lambda\in\rho(A_{0,m})$ and 
\begin{equation*}
	(\lambda-A_{0,m})^{-1}g=f=(\lambda-A_{0,1})^{-1}|_{X_{1,m}}g\,;
\end{equation*}
that is, $(\lambda-A_{0,m})^{-1}=(\lambda-A_{0,1})^{-1}|_{X_{1,m}}$. Invoking \eqref{M1} we derive that
\begin{equation*}
	\|(\lambda-A_{0,m})^{-1}\|_{\mathcal{L}(X_{1,m})} \le  \frac{\kappa}{\vert\lambda-\lambda_0\vert}\,,\quad \mathsf{Re}\, \lambda> \lambda_0\,,
\end{equation*}
and thus conclude that $A_{0,m}\in\mathcal{H}(X_{1,m})$ with $e^{tA_{0,m}} = e^{tA_{0,1}}|_{X_{1,m}}$ for $t\ge 0$. The fact that $A_{0,m} \in \mathcal{G}(X_{1,m},1,\omega_m)$ follows again by interpolation using \Cref{L.2.5}.
\end{proof}
	
\section{The Absorption Semigroup} \label{S.2.2}
	
Let $a\in L_{\infty,loc}([0,\infty))$ be a non-negative function and $m\ge 1$. We recall the definition \eqref{A.1} of the Schr\"odinger operator $A_{a,m}$ on $X_{1,m}$ given by
\begin{equation*}
	\begin{split}
		\dom(A_{a,m}) & = \{ f\in X_{1,m}\ :\ f\in \dom(A_{0,m})\,, \ af \in X_{1,m}\}\,, \\
		A_{a,m}f & = f'' - a f\,, \qquad f\in \dom(A_{a,m})\,. 
	\end{split} 
\end{equation*}
	
The main result of this section is that $A_{a,m}\in \mathcal{G}_+(X_{1,m},1,\omega_m)\cap \mathcal{H}(X_{1,m})$, the parameter $\omega_m$ being defined in \Cref{P.2.4}. The proof relies on \cite{ArBa1993}.
	
\begin{proposition}\label{P.A.1}
Assume that  $a$ satisfies \eqref{A.0} and let $m\ge 1$. Then $A_{a,m}\in \mathcal{G}_+(X_{1,m},1,\omega_m)\cap \mathcal{H}(X_{1,m})$. Moreover, $e^{tA_{a,m}} = e^{tA_{a,1}}|_{X_{1,m}}$ for $t\ge 0$.
\end{proposition}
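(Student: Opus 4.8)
The strategy is to treat $A_{a,m}$ as the generator of the absorption semigroup obtained by perturbing $A_{0,m}$ with the multiplication operator $f\mapsto -af$, and to quote the general theory of absorption semigroups on $L_1$-type spaces developed in \cite{ArBa1993} (or the Trotter--Kato/Miyadera route). First I would observe that $-a$ is a nonnegative potential (up to a sign), bounded on compact subsets of $[0,\infty)$ by \eqref{A.0}, so the multiplication operator $\mathcal{M}_a f := af$ with domain $\{f\in X_{1,m} : af\in X_{1,m}\}$ is a closed, densely defined, positive operator on the Banach lattice $X_{1,m}$. For $n\ge 1$ set $a_n := a\wedge n \in L_\infty(0,\infty)$; then $\mathcal{M}_{a_n}$ is bounded and $A_{a_n,m} := A_{0,m} - \mathcal{M}_{a_n}$ is a bounded perturbation of $A_{0,m}\in\mathcal{G}_+(X_{1,m},1,\omega_m)\cap\mathcal{H}(X_{1,m})$ (Proposition~\ref{P.2.4}), hence $A_{a_n,m}\in\mathcal{G}_+(X_{1,m})\cap\mathcal{H}(X_{1,m})$. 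Since $a_n\ge 0$, the semigroups $(e^{tA_{a_n,m}})_{t\ge0}$ are contractions — indeed Kato's inequality \eqref{E.0} (with $\ell(x)=x^m$, exactly as in Step~1 of the proof of Lemma~\ref{L.2.5}, the extra term $-a_n|f|\le 0$ only helping) gives dissipativity of $A_{a_n,m}-\omega_m$ on $X_{1,m}$, so $A_{a_n,m}\in\mathcal{G}_+(X_{1,m},1,\omega_m)$ uniformly in $n$.

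Next I would pass to the limit $n\to\infty$. Because $0\le a_n\le a_{n+1}\to a$, the resolvents $(\lambda - A_{a_n,m})^{-1}$ form, for fixed $\lambda>\omega_m$, a nonincreasing sequence of positive operators bounded by $(\lambda-\omega_m)^{-1}$, hence converge strongly (on the dense set $\dom(A_{0,m})$, then everywhere by the uniform bound) to a positive operator $R_\lambda$ satisfying $\|R_\lambda\|\le (\lambda-\omega_m)^{-1}$. One checks that $R_\lambda$ is injective with dense range and satisfies the resolvent identity, so it is the resolvent of a closed, densely defined operator $\tilde A$ with $\tilde A\in\mathcal{G}_+(X_{1,m},1,\omega_m)$ by Hille--Yosida; this is precisely the construction of the absorption semigroup in \cite{ArBa1993}. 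The remaining point is to identify $\tilde A$ with $A_{a,m}$, i.e. that the limit resolvent "sees" the full potential $a$ and imposes $af\in X_{1,m}$. For $g\in X_{1,m}$ and $f_n := (\lambda-A_{a_n,m})^{-1}g$ one has $\lambda f_n - f_n'' + a_n f_n = g$; testing against $\mathrm{sign}(f_n)\,x^m$ and using Kato's inequality yields, beyond the $X_{1,m}$-bound on $f_n$, the uniform bound $\int_0^\infty x^m a_n|f_n|\,\mathrm{d}x\le \|g\|_{X_m}+\text{(lower order)}$, whence by monotone convergence $af\in X_m$ for the limit $f$; the $X_1$-bound on $af$ is obtained the same way with weight $x$. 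Then $f_n''=\lambda f_n - g - a_n f_n \to \lambda f - g - af$ in $X_{1,m}$ and also to $f''$ distributionally, so $f\in\dom(A_{a,m})$ and $(\lambda-A_{a,m})f=g$; conversely $\lambda-A_{a,m}$ is injective by dissipativity (Kato again), so $R_\lambda=(\lambda-A_{a,m})^{-1}$ and $\tilde A=A_{a,m}$.

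Finally, analyticity: I would show $A_{a,m}\in\mathcal{H}(X_{1,m})$ by a perturbation argument, writing $A_{a,m}=A_{0,m}-\mathcal{M}_a$ and exhibiting $\mathcal{M}_a$ as a suitably small (Desch--Schappacher or relatively bounded with small bound in sectorial sense) perturbation of the sectorial operator $A_{0,m}$ — alternatively, since each $e^{tA_{a_n,m}}$ is analytic of the same sectorial type as $e^{tA_{0,m}}$ with uniform constants (the bounded perturbations $\mathcal{M}_{a_n}$ being dominated in the sectorial estimates by the positivity/contractivity just established), the graph-norm/derivative estimate $\limsup_{t\to0} t\|\tfrac{\mathrm d}{\mathrm dt}e^{tA_{a,m}}f\|\le C\|f\|$ passes to the limit, giving analyticity of the limit semigroup on $X_{1,m}$. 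The consistency statement $e^{tA_{a,m}}=e^{tA_{a,1}}|_{X_{1,m}}$ follows, as in Lemma~\ref{L.2.5} and Proposition~\ref{P.2.4}, from the corresponding identity for the $a_n$-approximations — which itself reduces, via the bounded perturbation series for $e^{tA_{a_n,m}}$ in terms of $e^{tA_{0,m}}$ and $\mathcal{M}_{a_n}$, to the already-known consistency $e^{tA_{0,m}}=e^{tA_{0,1}}|_{X_{1,m}}$ — and then from strong resolvent convergence. The main obstacle I anticipate is the identification of the limit operator with $A_{a,m}$, specifically extracting the uniform weighted bound on $a_n f_n$ and showing no mass is lost in the passage to the limit (so that $\dom(\tilde A)$ is exactly $\dom(A_{a,m})$ and not larger); this is where the $L_1$-structure and Kato's inequality in the weighted space, rather than abstract sectorial perturbation theory, do the real work.
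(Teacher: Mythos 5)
Your proposal follows the same overall route as the paper: build the absorption semigroup generated by $A_{a,m}=A_{0,m}-\mathcal{M}_a$ via the monotone truncations $a\wedge n$, identify the abstract generator's domain with $\dom(A_{a,m})$ by Kato-type weighted estimates, and inherit positivity, the exponential bound, and analyticity from $A_{0,m}$. The difference is packaging: the paper delegates the construction of the extension and its domain characterization to \cite[Proposition~4.3]{ArBa1993}, then proves a separate lemma (\Cref{ACDC}) identifying that abstract domain with $\dom(A_{a,m})$, and gets analyticity by citing \cite[Theorem~6.1]{ArBa1993} (analyticity is inherited by a semigroup dominated by an analytic positive semigroup). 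You instead re-derive the construction by hand (monotone convergence of resolvents, pseudo-resolvent argument, etc.). Both are legitimate, but your version leaves two real gaps.

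First, in the identification step you write that $f_n''=\lambda f_n-g-a_nf_n\to\lambda f-g-af$ in $X_{1,m}$, but this requires proving $a_nf_n\to af$ in $X_{1,m}$, which does not follow from the uniform bound $\sup_n\int(x+x^m)a_n|f_n|\,\mathrm dx<\infty$ and $af\in X_{1,m}$ alone: one must rule out escape of mass near $x=\infty$. The paper handles this (in Step~2 of \Cref{ACDC}) with a cutoff $\chi_R$ and the bound
\begin{equation*}
\int_{2R}^\infty (x+x^m)\,(a\wedge n)|f_n|\,\mathrm dx \;\le\; \int_R^\infty (x+x^m)|g_n|\,\mathrm dx + \frac{C}{R^2}\,,
\end{equation*}
which gives uniform integrability at infinity; combined with $a\in L_{\infty,\mathrm{loc}}$ and $f_n\to f$ in $X_{1,m}$ on bounded sets, the $X_{1,m}$-convergence follows. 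In your setup the right side is fixed ($g_n\equiv g$), which simplifies the bound, but the cutoff argument is still needed and should be spelled out.

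Second, the analyticity argument is not justified as stated. The assertion that each $e^{tA_{a_n,m}}$ is analytic ``of the same sectorial type'' as $e^{tA_{0,m}}$ ``with uniform constants'' does not follow from the bounded perturbation theorem, whose sectorial constants degrade as $\|\mathcal M_{a_n}\|\to\infty$; nor does positivity plus the $L_\infty$-type exponential bound automatically control the sector angle or the constant in the resolvent estimate on a sector. The step from the domination $0\le e^{tA_{a,m}}\le e^{tA_{0,m}}$ to analyticity of $e^{tA_{a,m}}$ is a genuine theorem (this is exactly \cite[Theorem~6.1]{ArBa1993}, applied with $-\omega_m+A_{0,m}$ analytic and quasi-contractive); to make your argument rigorous you would either need to invoke that result as the paper does, or establish the uniform-in-$n$ sectorial resolvent estimate directly (for instance by redoing the $t\|\frac{\mathrm d}{\mathrm dt}e^{tA_{a_n,m}}f\|_{X_{1,m}}\lesssim\|f\|_{X_{1,m}}$ estimate with uniform constants, which is not obvious from what you have written).
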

	
\begin{proof}
Let us recall that $X_{1,m}$ is a Banach lattice with order-continuous norm and introduce $Y := L_1((0,\infty),(x+x^m)a(x)\mathrm{d}x)$. We first observe that $\left( \dom(A_{0,m})\cap Y \right)^\perp = \{0\}$, where the disjoint complement $F^\perp$ of a subset $F$ of $X_{1,m}$ is given by
\begin{equation*}
		F^\perp := \{ g \in X_{1,m}\ :\ \min\{|f|,|g|\} = 0 \;\text{ for all }\; f\in F\}\,.
\end{equation*} 
Indeed,  since $C_c^\infty((0,\infty))$ is a subset of $\dom(A_{0,m})\cap Y$, we readily deduce that $g\equiv 0$ for $g\in \left( \dom(A_{0,m})\cap Y \right)^\perp$. Consequently, we are in a position to apply \cite[Proposition~4.3]{ArBa1993} and conclude that there is an extension $\hat{A}_{a,m}\in \mathcal{G}_+(X_{1,m})$  of $A_{a,m}$ with domain
\begin{equation*}
		\dom(\hat{A}_{a,m}) := \left\{
		\begin{array}{cc}
			f\in X_{1,m}\ :\ & \text{there exist $(f_n)_{n\ge 1}$ in $\dom(A_{0,m})$ and $g\in X_{1,m}$ such that}\\
			&  \\
			& \displaystyle{\lim_{n\to\infty} \left( \|f_n-f\|_{X_{1,m}} + \|A_{0,m}f_n - (a\wedge n) f_n + g \|_{X_{1,m}} \right) = 0}
		\end{array}
		\right\}\,.
\end{equation*}
It first follow from \Cref{ACDC} below that
$\dom(\hat{A}_{a,m})=\dom(A_{a,m})$ and therefore $\hat{A}_{a,m}=A_{a,m}$. Moreover,
 $0\le e^{t A_{a,m}}=e^{t \hat{A}_{a,m}}\le e^{t A_{0,m}}$ for $t\ge 0$ by \cite[p.432]{ArBa1993}. Since $A_{0,m}\in\mathcal{G}_+(X_{1,m},1,\omega_m)$ due to \Cref{P.2.4}, this ordering property, along with \cite[Remark 2.68]{BaAr2006}, implies
$$
\| e^{t \hat{A}_{a,m}}\|_{\mathcal{L}(X_{1,m})}\le \| e^{t A_{0,m}}\|_{\mathcal{L}(X_{1,m})}\le e^{\omega_m t}\,,\qquad t\ge 0\,.
$$ 
Hence $\hat{A}_{a,m}\in \mathcal{G}_+(X_{1,m},1,\omega_m)$.
 Finally, recalling that
$$
(-\omega_m+A_{0,m})\in\mathcal{G}_+(X_{1,m},1,0)\cap \mathcal{H}(X_{1,m})
$$  
by \Cref{P.2.4}, we infer from \cite[Theorem~6.1]{ArBa1993} that $A_{a,m}\in \mathcal{H}(X_{1,m})$.
\end{proof}
	
It  remains to check that $\dom(\hat{A}_{a,m})=\dom(A_{a,m})$.  This property actually follows from the monotonicity of the Laplace operator $A_{0,m}$ and the multiplication  $f\mapsto af$.
	
\begin{lemma}\label{ACDC}
Assume that  $a$ satisfies \eqref{A.0} and let $m\ge 1$. Then		 $\dom(\hat{A}_{a,m})=\dom(A_{a,m})$.
\end{lemma}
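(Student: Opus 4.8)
The plan is to prove the two inclusions $\dom(A_{a,m})\subseteq\dom(\hat A_{a,m})$ and $\dom(\hat A_{a,m})\subseteq\dom(A_{a,m})$ separately, exploiting that $\hat A_{a,m}$ is obtained as the generator produced by the monotone approximation $a\wedge n\nearrow a$ in \cite{ArBa1993}.

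For the inclusion $\dom(A_{a,m})\subseteq\dom(\hat A_{a,m})$, let $f\in\dom(A_{a,m})$ and take the constant sequence $f_n:=f$, which already lies in $\dom(A_{0,m})$ since $\dom(A_{a,m})\subset\dom(A_{0,m})$. Then $A_{0,m}f_n-(a\wedge n)f_n=f''-(a\wedge n)f$, and since $|(a\wedge n)f|\le a|f|$ with $af\in X_{1,m}$, dominated convergence gives $(a\wedge n)f\to af$ in $X_{1,m}$; hence $A_{0,m}f_n-(a\wedge n)f_n\to f''-af=A_{a,m}f$ in $X_{1,m}$. Taking $g:=-A_{a,m}f$ shows $f\in\dom(\hat A_{a,m})$, with $\hat A_{a,m}f=A_{a,m}f$.

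For the reverse inclusion, let $f\in\dom(\hat A_{a,m})$ with approximating sequence $(f_n)$ in $\dom(A_{0,m})$ and $g\in X_{1,m}$ so that $f_n\to f$ and $A_{0,m}f_n-(a\wedge n)f_n\to -g$ in $X_{1,m}$. Writing $f_n''=\big(A_{0,m}f_n-(a\wedge n)f_n\big)+(a\wedge n)f_n$, the first summand converges to $-g$, so the question reduces to controlling $(a\wedge n)f_n$. The key tool is the weighted Kato inequality of \Cref{L.2.3} with $\ell(x)=x+x^m$ (or the truncated version $\ell(x)=(x\wedge R)+(x\wedge R)^m$ followed by $R\to\infty$, exactly as in the proof of \Cref{L.2.5}): pairing the identity $\lambda f_n-A_{0,m}f_n+(a\wedge n)f_n=\lambda f_n-(A_{0,m}f_n-(a\wedge n)f_n)$ with $\ell\,\mathrm{sign}(f_n)$, and using that the $\ell'$-term from Kato's inequality is handled by the moment interpolation of \Cref{L.2.1} (absorbing the lower-order moment into $\|f_n\|_{X_{1,m}}$ at the cost of a shift $\omega_m$), one obtains a uniform bound
\begin{equation*}
	\|(a\wedge n)f_n\|_{X_{1,m}} \le C\big(\|f_n\|_{X_{1,m}}+\|A_{0,m}f_n-(a\wedge n)f_n\|_{X_{1,m}}\big)\le C'\,,
\end{equation*}
so that, after passing to a subsequence, $(a\wedge n)f_n$ converges weakly in $X_{1,m}$; combined with $f_n\to f$ a.e.\ (along a subsequence) and $a\wedge n\nearrow a$ pointwise, Fatou's lemma forces $af\in X_{1,m}$ and, by weak-$*$ lower semicontinuity together with the a.e.\ limit, $(a\wedge n)f_n\to af$ in $X_{1,m}$. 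Consequently $f_n''\to -g+af$ in $X_{1,m}$ and in $\mathcal{D}'((0,\infty))$, whence $f''=-g+af\in X_{1,m}$ and $af\in X_{1,m}$; the boundary condition $f(0)=0$ follows as in \textbf{Step~2} of the proof of \Cref{L.2.5} from $|f(0)-f_n(0)|\le\|f''-f_n''\|_{X_1}$. Therefore $f\in\dom(A_{a,m})$, which finishes the proof.

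The main obstacle is the uniform-in-$n$ bound on $\|(a\wedge n)f_n\|_{X_{1,m}}$: one must be careful that applying Kato's inequality to $f_n''$ alone is not quite legitimate (the term $f_n''$ need not be weighted-integrable against $x^m$ unless $(a\wedge n)f_n$ already is), so the estimate has to be run on the full combination $A_{0,m}f_n-(a\wedge n)f_n$ with the truncation $\ell=(x\wedge R)+(x\wedge R)^m$ and a final passage $R\to\infty$ via Fatou, precisely mirroring the resolvent estimates in \Cref{L.2.5}. Once this a priori estimate is in place, the identification of the limit and the verification of $f\in\dom(A_{a,m})$ are routine.
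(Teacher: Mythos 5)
Your Step~1, the uniform bound $\|(a\wedge n)f_n\|_{X_{1,m}}\le C'$, is in the right spirit and is essentially what the paper does, though two remarks apply. First, unlike in the range estimate of \Cref{L.2.5} (where the approximants lay only in $\dom(G)$), here $f_n\in\dom(A_{0,m})$ already gives $f_n''\in X_m$ and hence $f_n'\in X_{m-1}$, so \Cref{L.2.3} can be applied directly with $\ell(x)=x^m$ and the truncation you propose is unnecessary; the $\lambda f_n$ you add also cancels from both sides and buys nothing. Second, absorbing the $m(m-1)\|f_n\|_{X_{m-2}}$ term ``at the cost of a shift $\omega_m$'' is too vague: the paper must treat $m\ge 3$ and $m\in(1,3)$ separately, and the latter case crucially bootstraps from the already-established $m=1$ estimate $\|(a\wedge n)f_n\|_{X_1}\le\kappa$ in order to control $\|f_n''\|_{X_1}$ uniformly. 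A shift $\omega_m$ is irrelevant, since you are deriving an a priori bound, not proving dissipativity.

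The genuine gap sits in the step you call ``routine'': establishing $(a\wedge n)f_n\to af$ in $X_{1,m}$. You claim that the norm bound $\|(a\wedge n)f_n\|_{X_{1,m}}\le C'$ permits extraction of a weakly convergent subsequence. This is false: $X_{1,m}$ is a weighted $L_1$-space, hence not reflexive, and norm-bounded sets are \emph{not} weakly relatively compact; by Dunford--Pettis one needs uniform integrability and tightness, neither of which follows from the norm bound alone. (Had you the weak convergence, your concluding step would be fine, since weak $L_1$-convergence implies uniform integrability and then a.e.\ convergence plus Vitali gives strong convergence — but the premise is exactly what is missing.) The paper closes this gap by a tightness argument: applying \Cref{L.2.3} with $\ell(x)=x^m\chi_R(x)$ for a smooth cutoff $\chi_R$ vanishing on $(0,R)$ and equal to $1$ on $(2R,\infty)$, and using the uniform integrability of the \emph{convergent} sequence $(g_n)$, it derives the uniform tail estimate
\begin{equation*}
\lim_{R\to\infty}\sup_{n\ge 1}\int_{2R}^\infty (x+x^m)\,(a(x)\wedge n)\,|f_n(x)|\ \mathrm{d}x = 0\,.
\end{equation*}
Combined with the fact that $a\wedge n=a$ on $(0,2R)$ for $n$ large (since $a\in L_{\infty,loc}$) and with $f_n\to f$ in $X_{1,m}$, this yields $(a\wedge n)f_n\to af$ in $X_{1,m}$ directly, with no appeal to weak compactness. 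That is the missing ingredient in your argument; once it is in place, the rest of Step~3 (identifying $f''=af-g$ and checking $f(0)=0$) is indeed routine as you say.
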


\begin{proof}
Pick $f\in \dom(\hat{A}_{a,m})$. Then there are a sequence $(f_n)_{n\ge 1}$ in $\dom(A_{0,m})$ and $g\in X_{1,m}$ such that
\begin{equation}
	\lim_{n\to\infty} \left( \|f_n-f\|_{X_{1,m}} + \|g_n - g \|_{X_{1,m}} \right) = 0\,, \label{A.2}
\end{equation}
with $g_n := - A_{0,m}f_n + (a\wedge n) f_n$ for $n\ge 1$. In particular,
\begin{equation*}
	\kappa := \sup_{n\ge 1}\left\{ \|f_n\|_{X_{1,m}} + \|g_n\|_{X_{1,m}} \right\} < \infty\,. 
\end{equation*}
		
\medskip
				
\noindent\textbf{Step~1.} Let us first prove that $af\in X_{1,m}$. Indeed, we infer from \Cref{L.2.3} that, for $n\ge 1$, 
\begin{align*}
	\|g_n\|_{X_m} & \ge \int_0^\infty x^m\, \mathrm{sign}(f_n(x))\, g_n(x)\ \mathrm{d}x \\
	& \ge m \int_0^\infty x^{m-1}\ |f_n|'(x)\ \mathrm{d}x + \int_0^\infty x^m\ (a(x)\wedge n) |f_n(x)|\ \mathrm{d}x \\
	& = - m(m-1) \|f_n\|_{X_{m-2}} + \int_0^\infty x^m\ (a(x)\wedge n) |f_n(x)|\ \mathrm{d}x\,.
\end{align*}
In particular,  we derive
\begin{equation}\label{lp}
\int_0^\infty x\ (a(x)\wedge n) |f_n(x)|\ \mathrm{d}x\le \|g_n\|_{X_1}\le \kappa\,.
\end{equation}
Next, if $m\ge 3$, then it follows from Young's inequality that 
\begin{align*}
	\int_0^\infty  x^m\ (a(x)\wedge n) |f_n(x)|\ \mathrm{d}x & \le m(m-1) \left[ \frac{m-3}{m-1} \|f_n\|_{X_m} + \frac{2}{m-1} \|f_n\|_{X_1} \right] + \|g_n\|_{X_m} \\
	& \le [m(m-1)+1] \kappa\,.
\end{align*}
Likewise, if $m\in (1,3)$, then \Cref{L.2.1} implies that
\begin{align*}
	\int_0^\infty x^m\ (a(x)\wedge n) |f_n(x)|\ \mathrm{d}x & \le m(m-1) \left[ \frac{1}{m-1} \|f_n''\|_{X_1} + \|f_n\|_{X_1} \right] + \|g_n\|_{X_m} \\
	& \le m \|(a\wedge n) f_n -g_n\|_{X_1} + [m(m-1)+1]\kappa \\
	& \le m \sup_{l\ge 1} \{\|(a\wedge l) f_l\|_{X_1}\} + (m^2+1) \kappa\\
	& \le(m^2+m+1) \kappa \,,
\end{align*}
where the last inequality is due to  \eqref{lp}. Thus, in all cases for $m\ge 1$, we have shown that
\begin{equation*}
	\int_0^\infty (x+x^m)\ (a(x)\wedge n) |f_n(x)|\ \mathrm{d}x \le(m^2+m+2) \kappa\,.
\end{equation*}
 Fixing $N\ge 1$, we deduce from the previous estimate that, for all $n\ge N$,
\begin{equation*}
	\int_0^\infty (x+x^m)\ (a(x)\wedge N) |f_n(x)|\ \mathrm{d}x \le \int_0^\infty x^m\ (a(x)\wedge n) |f_n(x)|\ \mathrm{d}x \le (m^2+m+2) \kappa\,.
\end{equation*}
We then let $n\to\infty$ and infer from \eqref{A.2} that
\begin{equation*}
	\int_0^\infty (x+x^m)\ (a(x)\wedge N) |f(x)|\ \mathrm{d}x \le (m^2+m+2) \kappa\,.
\end{equation*}
Using Fatou's lemma to let $N\to\infty$, we conclude that $af\in X_{1,m}$ with
\begin{equation}
	\|a f\|_{X_{1,m}} \le  (m^2+m+2) \kappa\,. \label{A.6a}
\end{equation}   

\medskip
		
\noindent\textbf{Step~2.} We next show that $((a\wedge n) f_n)_{n\ge 1}$ converges to $af$ in  $X_{1,m}$. Let $\chi\in C^\infty((0,\infty))$ be such that $\chi(x)=1$ for $x> 2$, $\chi(x)=0$ for $x\in (0,1)$, and $\chi(x)\in [0,1]$ for $x\in [1,2]$. Introducing $\chi_R(x) := \chi(x/R)$ for $x\in (0,\infty)$ and $R>1$, we deduce from \Cref{L.2.3} (with $\ell(x) = x^m \chi_R(x)$) that
\begin{align*}
	\int_0^\infty x^m\ (a(x)\wedge n) \chi_R(x) |f_n(x)|\ \mathrm{d}x & + \int_0^\infty [x^m \chi_R'(x) + m x^{m-1} \chi_R(x)] |f_n|'(x)\ \mathrm{d}x \\
	& \hspace{1cm} \le \int_0^\infty x\chi_R(x) \mathrm{sign}(f_n(x)) g_n(x)\ \mathrm{d}x \\
	& \hspace{1cm} \le \int_R^\infty x\ |g_n(x)|\ \mathrm{d}x\,.
\end{align*}
Since 
\begin{align*}
	& \int_0^\infty [x^m \chi_R'(x) + m x^{m-1} \chi_R(x)] |f_n|'(x)\ \mathrm{d}x \\
	& \hspace{2cm} = - \int_0^\infty [x^m \chi_R''(x) + 2m x^{m-1} \chi_R'(x) + m(m-1) x^{m-2} \chi_R(x)] |f_n(x)|\ \mathrm{d}x \\
	& \hspace{2cm} = - \int_0^\infty x^m\ |f_n(x)| \left[ \frac{1}{R^2} \chi''\left( \frac{x}{R} \right) + \frac{2m}{Rx} \chi'\left( \frac{x}{R} \right) + \frac{m(m-1)}{x^2} \chi\left( \frac{x}{R} \right) \right]\ \mathrm{d}x 
\end{align*}
and
\begin{equation*}
	\left| \chi''(y) + \frac{2m}{y}\chi'(y) + \frac{m(m-1)}{y^2} \chi(y)\right| \le 3m^2 \|\chi''\|_{L_\infty(0,\infty)}\,, \qquad y\in (0,\infty)\,,
\end{equation*}
we further obtain
\begin{align*}
	\int_{2R}^\infty x^m\ (a(x)\wedge n) |f_n(x)|\ \mathrm{d}x & \le \int_0^\infty x^m\ (a(x)\wedge n) \chi_R(x) |f_n(x)|\ \mathrm{d}x \\
	& \le \sup_{l\ge 1} \left\{ \int_R^\infty x^m\ |g_l(x)|\ \mathrm{d}x \right\} + \frac{3m^2}{R^2} \|\chi''\|_{L_\infty(0,\infty)} \|f_n\|_{X_m} \\
	& \le \sup_{l\ge 1} \left\{ \int_R^\infty x^m\ |g_l(x)|\ \mathrm{d}x \right\} + \frac{3m^2\kappa}{R^2} \|\chi''\|_{L_\infty(0,\infty)}
\end{align*}
for $n\ge 1$. Since
\begin{equation*}
	\lim_{R\to\infty} \sup_{l\ge 1}\left\{ \int_R^\infty (x+x^m)\ |g_l(x)|\ \mathrm{d}x \right\} = 0
\end{equation*}
by \eqref{A.2}, we conclude
\begin{equation}\label{A.4a}
	\lim_{R\to\infty} \sup_{n\ge 1} \left\{ \int_{2R}^\infty (x+x^m)\ (a(x)\wedge n) |f_n(x)|\ \mathrm{d}x \right\}= 0\,. 
\end{equation}
Now, let $R>1$. Since $a\in L_\infty(0,2R)$ by \eqref{A.0}, there is $n_R\ge 1$ such that $a(x)\wedge n=a(x)$ for $x\in (0,2R)$ and $n\ge n_R$. Consequently, for $n\ge n_R$,
\begin{align*}
	\|(a\wedge n) f_n - af\|_{X_{1,m}} & \le \int_0^{2R} (x+x^m)\ a(x) |(f_n-f)(x)|\ \mathrm{d}x + \int_{2R}^\infty (x+x^m)\ (a(x)\wedge n) |f_n(x)|\ \mathrm{d}x \\
	& \hspace{2cm} + \int_{2R}^\infty (x+x^m)\ a(x) |f(x)|\ \mathrm{d}x \\
	& \le \|a\|_{L_\infty(0,2R)} \|f_n-f\|_{X_{1,m}} + \sup_{l\ge 1} \left\{ \int_{2R}^\infty (x+x^m)\ (a(x)\wedge l) |f_l(x)|\ \mathrm{d}x \right\} \\
	& \hspace{2cm} + \int_{2R}^\infty (x+x^m)\ a(x) |f(x)|\ \mathrm{d}x \,.
\end{align*}
We then pass to the limit as $n\to\infty$ and infer from \eqref{A.2} that
\begin{align*}
	\limsup_{n\to\infty} \|(a\wedge n) f_n - af\|_{X_{1,m}} &\le \sup_{l\ge 1} \left\{ \int_{2R}^\infty (x+x^m)\ (a(x)\wedge l) |f_l(x)|\ \mathrm{d}x \right\}\\
	&\qquad + \int_{2R}^\infty (x+x^m)\ a(x) |f(x)|\ \mathrm{d}x\,.
\end{align*}
We finally let $R\to\infty$ with the help of \eqref{A.6a} and \eqref{A.4a} and end up with
\begin{equation}
	\lim_{n\to\infty} \|(a\wedge n) f_n - af\|_{X_{1,m}} = 0\,. \label{A.7}
\end{equation}

\medskip
		
\noindent\textbf{Step~3.} We finally show that $f\in \dom( A_{0,m})$. Indeed, it readily follows from \eqref{A.2} that $(f_n'')_{n\ge 1}$ converges to $f''$ in the sense of distributions, while \eqref{A.2}, \eqref{A.6a}, and \eqref{A.7}  guarantee that $(f_n'')_{n\ge 1}=((a\wedge n) f_n - g_n)_{n\ge 1}$ converges to $af -g$ in $X_{1,m}$. Therefore, $f''$ belongs to $X_{1,m}$ with $f''=af-g$ and $(f_n'')_{n\ge 1}$ converges to $f''$ in $X_{1,m}$. Since $f_n(0)=0$ for $n\ge 1$, this convergence along with \Cref{L.2.1} ensures that $f(0)=0$ and we have proved that $f\in \dom(A_{0,m})$.
\end{proof}
	
For further use, we show that the graph norm of $A_{a,m}$ in $X_{1,m}$ controls independently the diffusive and absorption terms in $X_{1,m}$.
	
\begin{lemma}\label{L.A.2} 
	Assume  \eqref{A.0}. For $f\in \dom(A_{a,1})$,
	\begin{equation}
		\frac{1}{3} \left( \|A_{0,1}f\|_{X_1} + \|af\|_{X_1} \right) \le \|A_{a,1}f\|_{X_1}\,. \label{A.8a}
	\end{equation}
	Let $m>1$. For $f\in \dom(A_{a,m})$, 
	\begin{equation}
		\frac{1}{4(m+1)} \left( \|A_{0,m}f\|_{X_{1,m}} + \|af\|_{X_{1,m}} \right) - m \|f\|_{X_{1,m}} \le \|A_{a,m}f\|_{X_{1,m}}\,. \label{A.8b}
	\end{equation}
\end{lemma}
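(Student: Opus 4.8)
The strategy is to test the equation defining $A_{a,m}$ against a suitable sign-weight and use Kato's inequality (\Cref{L.2.3}) together with the estimates already established for $A_{0,m}$ and for the multiplication operator $f \mapsto af$ in the proof of \Cref{ACDC}. The point is that $A_{a,m}f = f'' - af = A_{0,m}f - af$, so that the two terms $A_{0,m}f$ and $af$ partially cancel; the content of the lemma is that no catastrophic cancellation occurs, i.e. the graph norm of $A_{a,m}$ still dominates each piece separately (up to a lower-order correction when $m>1$).

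\medskip

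\textbf{The case $m=1$.} Fix $f \in \dom(A_{a,1})$ and write $g := A_{a,1}f = f'' - af$, so $g \in X_1$. Testing against $x\,\mathrm{sign}(f(x))$ and invoking \Cref{L.2.3} with $\ell(x)=x$ yields
\begin{equation*}
	\|g\|_{X_1} \ge \int_0^\infty x\,\mathrm{sign}(f(x))\,[-f''(x)+a(x)f(x)]\ \mathrm{d}x \ge \int_0^\infty |f|'(x)\ \mathrm{d}x + \|af\|_{X_1} \ge \|af\|_{X_1} - \|f'\|_{L_1(0,\infty)}\,,
\end{equation*}
but in fact the term $\int_0^\infty |f|'(x)\,\mathrm{d}x$ equals $-|f(0)| = 0$ after an integration by parts (using $f(0)=0$ and $x|f(x)|\to 0$ from \Cref{L.2.1}), which immediately gives $\|af\|_{X_1} \le \|A_{a,1}f\|_{X_1}$. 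Then $\|A_{0,1}f\|_{X_1} = \|f''\|_{X_1} = \|g + af\|_{X_1} \le \|g\|_{X_1} + \|af\|_{X_1} \le 2\|A_{a,1}f\|_{X_1}$, and adding the two bounds produces \eqref{A.8a} with the constant $1/3$.

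\medskip

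\textbf{The case $m>1$.} Here one repeats the argument with the weight $\ell(x) = x + x^m$, i.e.\ testing $g = A_{a,m}f$ against $(x+x^m)\,\mathrm{sign}(f(x))$. Kato's inequality gives
\begin{equation*}
	\|g\|_{X_{1,m}} \ge \int_0^\infty (1 + m x^{m-1})\,|f|'(x)\ \mathrm{d}x + \|af\|_{X_{1,m}} = -m(m-1)\|f\|_{X_{m-2}} + \|af\|_{X_{1,m}}\,,
\end{equation*}
after integrating by parts in the second weight term. The obstruction is the negative term $-m(m-1)\|f\|_{X_{m-2}}$, which must be absorbed: if $m \ge 3$ one uses Young's inequality $\|f\|_{X_{m-2}} \le \varepsilon \|f\|_{X_m} + C_\varepsilon \|f\|_{X_1}$ exactly as in Step~1 of the proof of \Cref{L.2.5}, choosing $\varepsilon$ small; if $m \in (1,3)$ one instead invokes \eqref{P.2a}/\eqref{P.2} from \Cref{L.2.1} to bound $\|f\|_{X_{m-2}}$ by $\|f''\|_{X_1} + \|f\|_{X_1}$, and then recycles the $m=1$ bound $\|af\|_{X_1}\le \|A_{a,1}f\|_{X_1}$ and $\|f''\|_{X_1} \le 2\|A_{a,1}f\|_{X_1} \le 2\|A_{a,m}f\|_{X_{1,m}}$ to control the correction in terms of the graph norm and $\|f\|_{X_{1,m}}$. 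Either way one arrives at $\|af\|_{X_{1,m}} \le C(m)\big(\|A_{a,m}f\|_{X_{1,m}} + \|f\|_{X_{1,m}}\big)$ with $C(m) \le 2(m+1)$ after bookkeeping; then $\|A_{0,m}f\|_{X_{1,m}} = \|g + af\|_{X_{1,m}} \le \|A_{a,m}f\|_{X_{1,m}} + \|af\|_{X_{1,m}}$ gives the companion estimate, and summing yields \eqref{A.8b} with the stated constant $1/(4(m+1))$ and correction $m\|f\|_{X_{1,m}}$.

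\medskip

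\textbf{Main obstacle.} The only genuine difficulty is the $X_{m-2}$-term coming from the integration by parts in the weighted Kato inequality when $m>1$: for $m\in(1,3)$ the exponent $m-2$ may be negative, so one cannot simply interpolate between $X_1$ and $X_m$, and must instead call on the sharpened regularity estimates \eqref{P.2}–\eqref{P.2a} of \Cref{L.2.1} (valid precisely because $m-2 \in (-1,1)$). Tracking the constants carefully enough to land on $1/(4(m+1))$ is routine but requires splitting into the two ranges $m\ge 3$ and $1<m<3$ and using in the latter case the already-proved $m=1$ inequality \eqref{A.8a}.
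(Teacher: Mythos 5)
Your proof is correct and follows essentially the same route as the paper: Kato's inequality with a power weight, the resulting $\|f\|_{X_{m-2}}$ term absorbed by Young's inequality when $m\ge 3$ and by the interpolation estimate \eqref{P.2}--\eqref{P.2a} together with the already-established $m=1$ bound when $m\in(1,3)$. Testing with the combined weight $\ell(x)=x+x^m$ rather than with $x$ and $x^m$ separately, as the paper does, is only a cosmetic repackaging and yields the same estimates.
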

	
\begin{proof}
Let $f\in \dom(A_{a,1})$ and set $g:= -A_{a,1}f = -f''+af$. It follows from \Cref{L.2.3} (with $\ell(x)=x$) that
\begin{equation*}
	\|g\|_{X_1} \ge \int_0^\infty x\ \mathrm{sign}(f(x)) g(x)\ \mathrm{d}x \ge \int_0^\infty |f|'(x)\ \mathrm{d}x + \|a f\|_{X_1} = \|a f\|_{X_1}\,.
\end{equation*}
Consequently, 
\begin{equation}
	\|a f\|_{X_1} \le \|A_{a,1}f\|_{X_1} \;\text{ and }\; \|A_{0,1}f\|_{X_1} = \|A_{a,1}f + af\|_{X_1}\le 2 \|A_{a,1}f\|_{X_1}\,, \label{A.9}
\end{equation}
from which we deduce \eqref{A.8a}. 
		
Next, let $m>1$ and consider $f\in \dom(A_{a,m})$. We set $g:= -A_{a,m}f = -f''+af$ and infer from \Cref{L.2.3} (with $\ell(x)=x^m$) that
\begin{align*}
	\|g\|_{X_m} & \ge \int_0^\infty x^m\ \mathrm{sign}(f(x)) g(x)\ \mathrm{d}x \ge m \int_0^\infty x^{m-1} |f|'(x)\ \mathrm{d}x + \|a f\|_{X_m} \\
	& = \|a f\|_{X_m} - m(m-1) \|f\|_{X_{m-2}}\,.
\end{align*}
\begin{subequations}\label{A.10}
	Either $m\ge 3$ and it follows from Young's inequality and the above inequality that
	\begin{align}
		\|a f\|_{X_m} & \le \|g\|_{X_m} + m(m-1) \left( \frac{m-3}{m-1} \|f\|_{X_m} + \frac{2}{m-1} \|f\|_{X_1} \right) \nonumber \\
		& \le \|g\|_{X_m} + m(m-3) \|f\|_{X_m} + 2m \|f\|_{X_1} \nonumber \\
		& \le \|g\|_{X_m} + m^2 \|f\|_{X_{1,m}} \,. \label{A.10a}
	\end{align}
	Or $m\in (1,3)$ and we infer from \eqref{P.2} and \eqref{A.9} that
	\begin{align}
		\|a f\|_{X_m} & \le \|g\|_{X_m} + m(m-1) \left( \frac{1}{m-1}  \|f''\|_{X_1} + \|f\|_{X_1} \right) \nonumber \\
		& \le \|g\|_{X_m} + m \|A_{0,1}f\|_{X_1} + m(m-1) \|f\|_{X_1} \nonumber \\
		& \le \|g\|_{X_m} + 2m \|A_{a,1}f\|_{X_1} + m^2 \|f\|_{X_1} \,. \label{A.10b}
	\end{align}
\end{subequations}
Collecting \eqref{A.9} and \eqref{A.10} leads us to 
\begin{equation*}
	\|af\|_{X_{1,m}} \le (1+2m) \|A_{a,m}f\|_{X_{1,m}} + m^2 \|f\|_{X_{1,m}} \,,
\end{equation*}
 which in turn gives
 \begin{equation*}
 	\|A_{0,m}f\|_{X_{1,m}} \le \|A_{a,m}f\|_{X_{1,m}} + \|af\|_{X_{1,m}} \le 2(1+m) \|A_{a,m}f\|_{X_{1,m}} + m^2 \|f\|_{X_{1,m}}\,.
 \end{equation*}
Consequently, 
\begin{equation*}
	\frac{1}{4(1+m)} \left( \|A_{0,m}f\|_{X_{1,m}} + \|af\|_{X_{1,m}} \right) \le \|A_{a,m}f\|_{X_{1,m}} + \frac{m^2}{2(m+1)} \|f\|_{X_{1,m}}\,,
\end{equation*}
from which \eqref{A.8b} follows.
\end{proof}
	
\section{The Fragmentation-Diffusion Semigroup} \label{S.2.3}
	
We now consider the operator $\mathbb{A}_m=A_{a,m}+B_m$, where we recall that the nonlocal operator $B_{m}$ on $X_{1,m}$ is defined by 
\begin{equation*}
	\begin{split}
		\dom(B_m) & = \{ f\in X_{1,m}\ : \ af \in X_{1,m}\}\,, \\
		B_mf(x) & = \int_x^\infty a(y) b(x,y) f(y)\ \mathrm{d}y\,, \quad x\in (0,\infty)\,, \qquad f\in \dom(B_m)\,.
	\end{split}
\end{equation*}
We first, show that $B_m$ is $A_{a,m}$-bounded in $X_{1,m}$. 
	
\begin{lemma}\label{L.B.1}
Assume \eqref{A.0} and \eqref{B.0}. Let $m\ge 1$ and consider a measurable function $f$ on $(0,\infty)$ such that $af\in X_m$. Then 
\begin{equation}
	\int_0^\infty x^m \left| \int_x^\infty a(y) b(x,y) f(y)\ \mathrm{d}y \right|\ \mathrm{d}x \le \|af\|_{X_m}\,. \label{B.3}
\end{equation}
In addition,
\begin{equation}
M_1(B_mf)=M_1(af)\,,\qquad f\in \dom(B_m)\,, \label{M.200}
\end{equation}
and $B_m$ is $A_{a,m}$-bounded in $X_{1,m}$.
\end{lemma}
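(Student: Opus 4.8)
The plan is to prove the three assertions of \Cref{L.B.1} in order, starting from the pointwise bound \eqref{B.3}.

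\textbf{Step 1: the weighted estimate \eqref{B.3}.} First I would apply Tonelli's theorem to the double integral
\begin{equation*}
	\int_0^\infty x^m \int_x^\infty a(y)\, b(x,y)\, |f(y)|\ \mathrm{d}y\, \mathrm{d}x = \int_0^\infty a(y)\, |f(y)| \left( \int_0^y x^m\, b(x,y)\ \mathrm{d}x \right)\ \mathrm{d}y\,,
\end{equation*}
which is legitimate since $a b |f|\ge 0$. The inner integral is controlled using \eqref{B.0}: for $m\ge 1$ and $x\in (0,y)$ we have $x^m\le y^{m-1} x$, hence $\int_0^y x^m\, b(x,y)\ \mathrm{d}x \le y^{m-1} \int_0^y x\, b(x,y)\ \mathrm{d}x = y^m$. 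Substituting back gives $\int_0^\infty x^m \int_x^\infty a(y)\, b(x,y)\, |f(y)|\ \mathrm{d}y\, \mathrm{d}x \le \int_0^\infty y^m\, a(y)\, |f(y)|\ \mathrm{d}y = \|af\|_{X_m}$, which yields \eqref{B.3} after bounding the modulus of the integral by the integral of the modulus. In particular, taking $m=1$ shows $B_m f\in X_1$ and taking the exponent $m$ shows $B_m f\in X_m$, so $B_m f\in X_{1,m}$ whenever $af\in X_{1,m}$, i.e. $B_m$ maps $\dom(B_m)$ into $X_{1,m}$ and is well-defined.

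\textbf{Step 2: the mass identity \eqref{M.200}.} For $f\in\dom(B_m)$ we have $af\in X_{1,m}\subset X_1$, so the computation in Step 1 with $m=1$ shows the double integral converges absolutely. Tonelli (now Fubini, by absolute convergence) then gives
\begin{equation*}
	M_1(B_m f) = \int_0^\infty x \int_x^\infty a(y)\, b(x,y)\, f(y)\ \mathrm{d}y\, \mathrm{d}x = \int_0^\infty a(y)\, f(y) \left( \int_0^y x\, b(x,y)\ \mathrm{d}x \right)\ \mathrm{d}y = \int_0^\infty y\, a(y)\, f(y)\ \mathrm{d}y = M_1(af)\,,
\end{equation*}
using \eqref{B.0} for the inner integral; this is \eqref{M.200}. (One should note that $f$ here need not be nonnegative, which is exactly why the absolute convergence from Step 1 is needed before swapping the order of integration.)

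\textbf{Step 3: $A_{a,m}$-boundedness.} Since $\dom(A_{a,m})\subset\dom(B_m)$ and, by \eqref{B.3} applied with both exponents $1$ and $m$, $\|B_m f\|_{X_{1,m}}\le \|af\|_{X_{1,m}}$ for $f\in\dom(B_m)$, it suffices to bound $\|af\|_{X_{1,m}}$ by the graph norm of $A_{a,m}$. For $m=1$ this is \eqref{A.9} (equivalently \eqref{A.8a}) from \Cref{L.A.2}, giving $\|B_1 f\|_{X_1}\le \|af\|_{X_1}\le \|A_{a,1}f\|_{X_1}$. For $m>1$, the estimate \eqref{A.8b} of \Cref{L.A.2} gives $\|af\|_{X_{1,m}}\le 4(m+1)\|A_{a,m}f\|_{X_{1,m}} + 4m(m+1)\|f\|_{X_{1,m}}$, hence $\|B_m f\|_{X_{1,m}}\le 4(m+1)\|A_{a,m}f\|_{X_{1,m}} + 4m(m+1)\|f\|_{X_{1,m}}$, which is precisely the statement that $B_m$ is $A_{a,m}$-bounded. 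I expect no real obstacle here: the only subtle point is the order of quantifiers in Step 2 (justifying Fubini for sign-changing $f$ before using \eqref{B.0}), and the fact that \eqref{B.3} must be invoked separately for the two weights making up the norm of $X_{1,m}$; everything else is a direct application of \eqref{B.0} and of \Cref{L.A.2}.
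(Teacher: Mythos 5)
Your proof is correct and follows essentially the same route as the paper: Fubini--Tonelli plus the monotonicity bound $x^m\le y^{m-1}x$ together with \eqref{B.0} for both \eqref{B.3} and \eqref{M.200}, and then \Cref{L.A.2} for the $A_{a,m}$-boundedness. If anything, your explicit handling of $m=1$ via \eqref{A.8a} is slightly more careful than the paper, which invokes only \eqref{A.8b} (stated for $m>1$) at that step.
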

	
\begin{proof}
We infer from \eqref{B.0} and Fubini-Tonelli's theorem that \begin{align*}
	\int_0^\infty x^m \int_x^\infty a(y) b(x,y) |f(y)|\ \mathrm{d}y\mathrm{d}x & = \int_0^\infty a(y) |f(y)| \int_0^y x^m b(x,y)\ \mathrm{d}x\mathrm{d}y \\
	& \le \int_0^\infty y^{m-1} a(y) |f(y)| \int_0^y x b(x,y)\ \mathrm{d}x\mathrm{d}y = \|af\|_{X_m}\,,
\end{align*}
from which \eqref{B.3} readily follows. Next, \eqref{M.200} is a straightforward consequence of \eqref{B.0} and Fubini's theorem.

Finally, let $f\in \dom(A_{a,m}) \subset \dom(B_m)$. By \eqref{A.8b} and \eqref{B.3},
\begin{align*}
\|B_mf \|_{X_{1,m}} & \le \int_0^\infty (x+x^m)
\left| \int_x^\infty a(y) b(x,y) f(y)\ \mathrm{d}y \right|\ \mathrm{d}x \le \|a f\|_{X_{1,m}} \\
& \le 4 (m+1) \|A_{a,m}f\|_{X_{1,m}} + 4m(m+1) \|f\|_{X_{1,m}}\,,
\end{align*}
so that $B_m$ is $A_{a,m}$-bounded. 
\end{proof}
	
As already observed in the literature, see, e.g., \cite[Theorem~5.1.47~(c)]{BLL2020a}, the inequality \eqref{B.10} implies that, for each $m>1$, there is $\delta_m\in (0,1)$ such that 
\begin{equation}
	(1-\delta_m) y^m \ge \int_0^y x^m b(x,y)\ \mathrm{d}x\,, \qquad y\in (0,\infty)\,. \label{B.11}
\end{equation}

An immediate consequence of \eqref{B.11} is a strict domination of $af$ over $B_m f$ in $X_m$. 
	
\begin{lemma}\label{L.B.1.5}
Assume \eqref{A.0}, \eqref{B.0}, and \eqref{B.10}. Let $m>1$ and consider $f\in \dom(B_m)$. Then
\begin{equation*}
	\|B_m f\|_{X_m} \le (1-\delta_m)  \|af\|_{X_m}\,. 
\end{equation*}
\end{lemma}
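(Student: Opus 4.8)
The plan is to mimic the computation already carried out in \Cref{L.B.1} for the inequality \eqref{B.3}, but now using the sharper bound \eqref{B.11} in place of the mass-conservation identity \eqref{B.0}. First I would note that for $f\in\dom(B_m)$ we have $af\in X_{1,m}\subset X_m$, so all the integrals below are finite and Fubini-Tonelli's theorem applies. Starting from the definition of $B_m$ and estimating crudely by moving the absolute value inside the inner integral, I write
\begin{align*}
\|B_m f\|_{X_m} & = \int_0^\infty x^m \left| \int_x^\infty a(y) b(x,y) f(y)\ \mathrm{d}y \right|\ \mathrm{d}x \le \int_0^\infty x^m \int_x^\infty a(y) b(x,y) |f(y)|\ \mathrm{d}y\mathrm{d}x\,.
\end{align*}

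Next I would swap the order of integration via Fubini-Tonelli's theorem, which turns the iterated integral into $\int_0^\infty a(y)|f(y)| \int_0^y x^m b(x,y)\ \mathrm{d}x\mathrm{d}y$. At this point, instead of bounding $x^m\le y^{m-1}x$ on $(0,y)$ as in \Cref{L.B.1}, I apply \eqref{B.11} directly to the inner integral $\int_0^y x^m b(x,y)\ \mathrm{d}x \le (1-\delta_m) y^m$. This yields
\begin{align*}
\|B_m f\|_{X_m} & \le \int_0^\infty a(y) |f(y)| \int_0^y x^m b(x,y)\ \mathrm{d}x\mathrm{d}y \le (1-\delta_m) \int_0^\infty y^m a(y) |f(y)|\ \mathrm{d}y = (1-\delta_m) \|af\|_{X_m}\,,
\end{align*}
which is exactly the claimed estimate.

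There is essentially no obstacle here: the only point requiring a word of care is the justification of the sign manipulation $\left|\int_x^\infty a(y)b(x,y)f(y)\,\mathrm{d}y\right| \le \int_x^\infty a(y)b(x,y)|f(y)|\,\mathrm{d}y$, which is immediate since $a,b\ge 0$, and the applicability of Fubini-Tonelli, which is licensed once we know $af\in X_m$ (so that the doubly-indexed integrand, after inserting $|f|$, has finite integral by the very computation above read with $(1-\delta_m)$ replaced by $1$, cf. \eqref{B.3}). The statement \eqref{B.11} itself is already recorded in the excerpt as a consequence of \eqref{B.10}, so I would simply invoke it. The whole argument is three displayed lines and needs no further machinery.
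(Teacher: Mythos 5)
Your proposal is correct and follows the paper's proof verbatim: move the absolute value inside, swap the order of integration by Fubini--Tonelli, and apply \eqref{B.11} to the inner integral $\int_0^y x^m b(x,y)\,\mathrm{d}x$. The extra remarks about finiteness and the justification of Fubini are sound but not needed beyond what the paper already records in \Cref{L.B.1}.
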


\begin{proof}
It readily follows from \eqref{B.11} and Fubini's theorem that
\begin{align*}
	\|B_m f\|_{X_m} & \le \int_0^\infty x^m \int_x^\infty a(y) b(x,y) |f(y)|\ \mathrm{d}y\mathrm{d}x = \int_0^\infty a(y) |f(y)| \int_0^y x^m b(x,y)\ \mathrm{d}x\mathrm{d}y \\
	& \le (1-\delta_m)  \int_0^\infty y^m a(y) |f(y)| \mathrm{d}y =(1-\delta_m)  \|af\|_{X_m}\,.
\end{align*}	
\end{proof}

We shall see next that the property~\eqref{B.11} ensures that $B_m$ is a Miyadera perturbation of $A_{a,m}$. Recall that a similar result is available for the fragmentation equation without diffusion \cite{Ban2020}. 
	
\begin{proposition}\label{P.B.2}
	Let $m>1$ and assume \eqref{A.0}, \eqref{B.0}, and \eqref{B.10}. Then there are $q_m\in (0,1)$ and $t_m>0$ such that
	\begin{equation*}
	\int_0^{t_m} \| B_m e^{sA_{a,m}}f\|_{X_{1,m}}\ \mathrm{d}s \le  q_m \|f\|_{X_{1,m}}\,, \qquad f\in \dom(A_{a,m}) \,.
	\end{equation*}
	In particular, $B_m$ is a Miyadera perturbation of $A_{a,m}$.
\end{proposition}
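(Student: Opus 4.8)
The plan is to estimate the time integral of $\|B_m e^{sA_{a,m}}f\|_{X_{1,m}}$ directly, exploiting two facts: the pointwise domination $0 \le e^{sA_{a,m}}|g| \le e^{sA_{0,m}}|g|$ from \Cref{P.A.1}, and the strict domination $\|B_m h\|_{X_m} \le (1-\delta_m)\|ah\|_{X_m}$ from \Cref{L.B.1.5}, together with the crude bound $\|B_m h\|_{X_1} \le \|ah\|_{X_1}$ from \eqref{B.3}. So the core quantity to control is $\int_0^t \|a\,e^{sA_{a,m}}f\|_{X_{1,m}}\,\mathrm{d}s$ for $f\in\dom(A_{a,m})$, and it suffices to show this is bounded by a constant times $\|f\|_{X_{1,m}}$ with the constant becoming small as $t\to 0$ in the $X_m$-component. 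Since $(e^{sA_{a,m}})_s$ is strongly continuous and $f\in\dom(A_{a,m})$, one has $af = A_{a,m}f - A_{0,m}f = A_{a,m}f - (e^{sA_{a,m}}f)' $-type identities; more usefully, the standard absorption-semigroup identity $\int_0^t a\,e^{sA_{a,m}}f\,\mathrm{d}s = f - e^{tA_{a,m}}f + \int_0^t A_{0,m}e^{sA_{a,m}}f\,\mathrm{d}s$ is awkward because of the $A_{0,m}$ term, so instead I would integrate the inequality coming from Kato's inequality \eqref{E.0} applied to $u(s):=e^{sA_{a,m}}f$.

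More precisely, first I would reduce to $f\ge 0$ by the usual splitting $f=f^+-f^-$ and positivity of the semigroup, and note $0\le u(s)=e^{sA_{a,m}}f\le e^{sA_{0,m}}f$ pointwise. Writing the evolution $\partial_s u = u'' - au$ and testing against $x^m$ (legitimate since $u(s)\in\dom(A_{a,m})$ for $s>0$ by analyticity, and using \Cref{L.2.1} to kill boundary terms), one gets
\begin{equation*}
\frac{\mathrm{d}}{\mathrm{d}s} M_m(u(s)) = \int_0^\infty x^m u''(s,x)\,\mathrm{d}x - \int_0^\infty x^m a(x) u(s,x)\,\mathrm{d}x = m(m-1) M_{m-2}(u(s)) - \|a u(s)\|_{X_m}\,.
\end{equation*}
Integrating over $(0,t)$ yields $\int_0^t \|au(s)\|_{X_m}\,\mathrm{d}s = M_m(f) - M_m(u(t)) + m(m-1)\int_0^t M_{m-2}(u(s))\,\mathrm{d}s \le M_m(f) + m(m-1)\int_0^t M_{m-2}(u(s))\,\mathrm{d}s$. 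For the lower-order moment $M_{m-2}$ I would use \Cref{P.2.4} plus interpolation / the moment bounds available for the heat semigroup (via $u(s)\le e^{sA_{0,m}}f$ and the explicit kernel representation \eqref{Rep}), controlling $\int_0^t M_{m-2}(e^{sA_{0,m}}f)\,\mathrm{d}s$ by $C(t)\|f\|_{X_{1,m}}$ with $C(t)\to 0$ as $t\to 0^+$; here \Cref{L.2.1}-type estimates or the elementary bound $M_{m-2}(e^{sA_{0,m}}f)\le \varepsilon^{m-1}M_m + \varepsilon^{m-3}M_1$ combined with $\|e^{sA_{0,m}}f\|_{X_{1,m}}\le e^{\omega_m s}\|f\|_{X_{1,m}}$ does the job. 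The analogous computation with weight $x$ (where $m(m-1)\to 0$) gives directly $\int_0^t \|au(s)\|_{X_1}\,\mathrm{d}s \le M_1(f) \le \|f\|_{X_{1,m}}$.

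Assembling: for $f\ge 0$,
\begin{align*}
\int_0^{t}\|B_m e^{sA_{a,m}}f\|_{X_{1,m}}\,\mathrm{d}s &\le \int_0^t \big( \|a u(s)\|_{X_1} + (1-\delta_m)\|a u(s)\|_{X_m}\big)\,\mathrm{d}s \\
&\le M_1(f) + (1-\delta_m)\Big( M_m(f) + m(m-1)\int_0^t M_{m-2}(u(s))\,\mathrm{d}s\Big)\,.
\end{align*}
This is not yet a contraction because of the $M_1(f)$ term, so I would run the $X_1$-computation more carefully: on $(0,t)$ one actually has $\int_0^t\|au(s)\|_{X_1}\,\mathrm{d}s = M_1(f) - M_1(u(t))$, and mass conservation-type identities (cf.\ \eqref{M.200}, \eqref{M.100}) show that the $X_1$ contribution of $A_{a,1}$ is exactly cancelled by $B_1$ in the full operator — but since here we only have $B_m e^{sA_{a,m}}$, the honest route is to observe $\|au(s)\|_{X_1}=-\frac{\mathrm{d}}{\mathrm{d}s}M_1(u(s))$ is integrable with integral $\le M_1(f)$, and then use that $M_1(f)\le \varepsilon^{m-1}$-scaling against $\|f\|_{X_{1,m}}$ is \emph{not} small — so instead one should choose $t_m$ small enough that the genuinely small pieces ($\int_0^t M_{m-2}$ and, in the $X_m$-component, $M_m(f)-M_m(u(t))$ which can be made a fraction of $M_m(f)$ plus the small lower-order term) combine with the fixed fraction $(1-\delta_m)$ to beat $1$; the $M_1$ term is absorbed because $\|\cdot\|_{X_{1,m}}=\|\cdot\|_{X_1}+\|\cdot\|_{X_m}$ and $1-\delta_m<1$ strictly, so taking $q_m$ close to $1-\delta_m$ and $t_m$ small suffices once one checks $\int_0^{t_m}\|au(s)\|_{X_1}\,\mathrm{d}s$ alone is $\le q_m\|f\|_{X_1}$, which holds for any $t_m$ with $q_m\ge$ the ratio — \textbf{this balancing of the $X_1$ versus $X_m$ constants is the main obstacle}, and I expect the clean way around it is the refined Kato estimate on $(0,t)$ showing $\int_0^t\|au(s)\|_{X_1}\,\mathrm{d}s \le M_1(f)-M_1(u(t))$ together with a lower bound $M_1(u(t))\ge (1-o(1))M_1(f)$ as $t\to0$, so that this integral is itself $o(1)\|f\|_{X_1}$. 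Finally I would pass from $f\ge0$ to general $f\in\dom(A_{a,m})$ by the splitting and linearity, and invoke \cite{Voi1987}-style Miyadera perturbation to conclude. The routine calculus (Fubini, boundary terms, Young's inequality for $M_{m-2}$, dominated convergence) I would leave to the reader.
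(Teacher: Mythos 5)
You have the right skeleton for the $X_m$-component (Kato's inequality / the moment ODE $\tfrac{\mathrm d}{\mathrm ds}\|F\|_{X_m}+\|aF\|_{X_m}\le m(m-1)\|F\|_{X_{m-2}}$, integration in time, absorption of the $(1-\delta_m)$ factor), and you correctly identify the sticking point: the crude bound $\int_0^t\|aF(s)\|_{X_1}\,\mathrm ds\le M_1(f)$ has constant $1$ and cannot produce $q_m<1$. But the fix you then propose does not work. You suggest that the refined identity $\int_0^t\|aF(s)\|_{X_1}\,\mathrm ds=M_1(f)-M_1(F(t))$ together with $M_1(F(t))\ge(1-o(1))M_1(f)$ as $t\to0$ makes this integral $o(1)\|f\|_{X_1}$. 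That convergence is not uniform over $f\in\dom(A_{a,m})$: the Miyadera condition needs a \emph{fixed} $t_m$ and $q_m$ working for all such $f$, and since $a$ is unbounded, for any fixed $t>0$ one can choose $f$ concentrated where $a$ is large so that $M_1(F(t))/M_1(f)$ is as small as you please. There is no $f$-independent lower bound on $M_1(F(t))$ in terms of $M_1(f)$ alone, so this route breaks. The paper instead controls the $X_1$-integral by the pointwise split $a(x)\le\|a\|_{L_\infty(0,R)}\,\mathbf 1_{(0,R)}(x)+R^{1-m}x^{m-1}a(x)$, giving
\begin{equation*}
\int_0^t\|aF(s)\|_{X_1}\,\mathrm ds\le\|a\|_{L_\infty(0,R)}\,t\,\|f\|_{X_1}+R^{1-m}\int_0^t\|aF(s)\|_{X_m}\,\mathrm ds\,.
\end{equation*}
The first term is linear in $t$ (small for $t$ small, $R$ fixed), and the second carries the factor $R^{1-m}$, which, for $R=R_m:=(\delta_m/2)^{1/(1-m)}$, is eaten by the slack in $1-\delta_m$ coming from \Cref{L.B.1.5}. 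That truncation-and-trade-against-$X_m$ step is the missing idea.

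A secondary point you gloss over as routine: bounding $\int_0^t\|F(s)\|_{X_{m-2}}\,\mathrm ds$ when $m\in(1,3)$, so $m-2\in(-1,1)$. The elementary Young-type interpolation $M_{m-2}\lesssim M_1+M_m$ you mention only applies when $m-2\ge1$. For $m\in(1,3)$ you must instead use \eqref{P.2a} ($X_{m-2}$-norm controlled by $\|f''\|_{X_1}$ and $\|f\|_{X_1}$), combined with \Cref{L.A.2} and the analyticity estimate $\|A_{a,1}e^{sA_{a,1}}\|_{\mathcal L(X_1)}\le Ce^t/s$ for $s\in(0,t)$, which produces the integrable singularity $s^{(m-3)/2}$ on $(0,t)$. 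This is a genuinely different (and not routine) estimate from the $m\ge3$ case.
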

	
\begin{proof}
Consider $f\in \dom(A_{a,m})$ and set $F(t) := e^{tA_{a,m}}f$ for $t\ge 0$. Owing to \Cref{P.A.1}, we have
\begin{equation}
	\|F(t)\|_{X_1} \le \|f\|_{X_1}\,, \quad \|F(t)\|_{X_{1,m}} \le e^{\omega_m t} \|f\|_{X_{1,m}}\,, \qquad t\ge 0\,. \label{B.12}
\end{equation}
In addition, $F$ is a classical solution to 
\begin{equation*}
	\frac{\mathrm{d}}{\mathrm{d}t} F - A_{a,m} F = 0\,, \quad t>0\,, \qquad F(0)=f\,,
\end{equation*}
and we deduce from \Cref{L.2.3} (with $\ell(x)=x^m$) that
\begin{equation*}
	\frac{\mathrm{d}}{\mathrm{d}t} \|F\|_{X_m} + \|a F\|_{X_m} \le m(m-1) \|F\|_{X_{m-2}}\,, \qquad t\ge 0\,.
\end{equation*}
Hence, after integration with respect to time,
\begin{equation}
	\int_0^t \|a F(s)\|_{X_m}\ \mathrm{d}s \le \|f\|_{X_m} + m(m-1) \int_0^t \|F(s)\|_{X_{m-2}}\ \mathrm{d}s\,, \qquad t\ge 0\,. \label{B.13}
\end{equation}
		
Now, let $t>0$. It follows from \eqref{B.0}, \eqref{B.3} (with $m=1$), and \Cref{L.B.1.5} that
\begin{equation}
	\int_0^t \| B_mF(s)\|_{X_{1,m}}\ \mathrm{d}s \le \int_0^t \|a F(s)\|_{X_1}\ \mathrm{d}s + (1-\delta_m) \int_0^t \|a F(s)\|_{X_m}\ \mathrm{d}s\,. \label{B.14}
\end{equation}
For $R>1$, we infer from \eqref{A.0} and \eqref{B.12} that
\begin{align*}
	\int_0^t \|a F(s)\|_{X_1}\ \mathrm{d}s & \le \|a\|_{L_\infty(0,R)} \int_0^t \int_0^R x |F(s,x)|\ \mathrm{d}x\mathrm{d}s + R^{1-m} \int_0^t \int_R^\infty x^m a(x) |F(s,x)|\ \mathrm{d}x\mathrm{d}s \\
	& \le \|a\|_{L_\infty(0,R)} \int_0^t \|F(s)\|_{X_1}\ \mathrm{d}s + R^{1-m} \int_0^t \int_0^\infty x^m a(x) |F(s,x)|\ \mathrm{d}x\mathrm{d}s \\
	& \le \|a\|_{L_\infty(0,R)} \|f\|_{X_1} t + R^{1-m} \int_0^t \| a F(s)\|_{X_m}\ \mathrm{d}s\,. 
\end{align*}
Combining \eqref{B.13}, \eqref{B.14}, and the above estimate with $R=R_m:=(\delta_m/2)^{1/(1-m)}$ gives
\begin{align}
	\int_0^t \| B_mF(s)\|_{X_{1,m}}\ \mathrm{d}s & \le \|a\|_{L_\infty(0,R_m)} \|f\|_{X_1} t + \left( 1 - \frac{\delta_m}{2} \right) \int_0^t \| a F(s)\|_{X_m}\ \mathrm{d}s \nonumber \\
	& \le \|a\|_{L_\infty(0,R_m)} \|f\|_{X_1} t + \left( 1 - \frac{\delta_m}{2} \right) \|f\|_{X_m} \nonumber \\
	& \qquad + m(m-1)\left( 1 - \frac{\delta_m}{2} \right) \int_0^t \|F(s)\|_{X_{m-2}}\ \mathrm{d}s \nonumber \\
	& \le \|a\|_{L_\infty(0,R_m)} \|f\|_{X_1} t + \left( 1 - \frac{\delta_m}{2} \right) \|f\|_{X_m} \nonumber \\
	& \qquad + m(m-1) \int_0^t \|F(s)\|_{X_{m-2}}\ \mathrm{d}s\,. \label{B.15} 
\end{align}
		
At this point, we handle the cases $m\ge 3$ and $m\in (1,3)$ in a different way. We first consider $m\ge 3$. We use Young's inequality, along with \eqref{B.12}, to obtain
\begin{align}
	m(m-1) \int_0^t \|F(s)\|_{X_{m-2}}\ \mathrm{d}s & \le m(m-1) \int_0^t \left[ \frac{m-3}{m-1} \|F(s)\|_{X_m} + \frac{2}{m-1} \|F(s)\|_{X_1} \right]\ \mathrm{d}s \nonumber \\
	& \le \frac{m(m-3)}{1+\omega_m} \|f\|_{X_{1,m}} \left( e^{(1+\omega_m)t} - 1 \right) + 2m \|f\|_{X_1} t\,. \label{B.16}
\end{align}
Collecting \eqref{B.15} and \eqref{B.16} leads us to
\begin{align*}
	\int_0^t \| B_mF(s)\|_{X_{1,m}}\ \mathrm{d}s & \le \left[ \|a\|_{L_\infty(0,R_m)} t + \frac{m(m-3)}{1+\omega_m} \left( e^{(1+\omega_m)t} - 1 \right) + 2m t \right] \|f\|_{X_1} \\
	& \qquad + \left[ 1 - \frac{\delta_m}{2} + \frac{m(m-3)}{1+\omega_m} \left( e^{(1+\omega_m)t} - 1 \right) \right] \|f\|_{X_m}\,.
\end{align*}
We now pick $t_m>0$ such that 
\begin{equation*}
	\left( \|a\|_{L_\infty(0,R_m)} + 2m \right) t_m \le 1 - \frac{\delta_m}{2} \quad \text{ and } \quad \frac{m(m-3)}{1+\omega_m} \left( e^{(1+\omega_m)t_m} - 1 \right)\le \frac{\delta_m}{4}
\end{equation*}
and infer from the previous estimate (with $t=t_m$) that
\begin{equation*}
	\int_0^{t_m} \| B_mF(s)\|_{X_{1,m}}\ \mathrm{d}s \le \left( 1 - \frac{\delta_m}{4} \right) \|f\|_{X_{1,m}}\,.
\end{equation*}
Recalling that $B_m$ is $A_{a,m}$-bounded by \Cref{L.B.1}, we have thus established that $B_m$ is a Miyadera perturbation of $A_{a,m}$ for $m\ge 3$.
		
Let us now consider $m\in (1,3)$. In that case, $m-2\in (-1,1)$ and it follows from \Cref{L.2.1}, \Cref{L.A.2}, and \eqref{B.12} that, for $s\in (0,t)$,  
\begin{align*}
	\|F(s)\|_{X_{m-2}} & \le \frac{2(3-m)^{(m-3)/2}}{m-1} \|F''(s)\|_{X_1}^{(3-m)/2} \|F(s)\|_{X_1}^{(m-1)/2} \\
	& \le \frac{6(3-m)^{(m-3)/2}}{m-1} \|A_{a,1}F(s)\|_{X_1}^{(3-m)/2} \|f\|_{X_1}^{(m-1)/2}\,.
\end{align*}
Owing to the analyticity of $\left( e^{tA_{a,m}} \right)_{t\ge 0}$, see \Cref{P.A.1}, we further infer from \cite[Theorem~2.5.2]{Paz1983} that there is $C>0$ such that
	\begin{equation*}
		\|A_{a,1} e^{s A_{a,1}} \|_{\mathcal{L}(X_1)} \le C \frac{e^s}{s} \le C \frac{e^t}{s}\,, \qquad s\in (0,t)\,.
	\end{equation*}
Combining the above two estimates gives
\begin{equation*}
	\|F(s)\|_{X_{m-2}} \le C(m) \|f\|_{X_1} e^{(3-m)t/2} s^{(m-3)/2}\,, \qquad s\in (0,t)\,.
\end{equation*}
Hence, recalling \eqref{B.15}, 
\begin{align*}
	\int_0^t \| B_m F(s)\|_{X_{1,m}}\ \mathrm{d}s & \le \left[ \|a\|_{L_\infty(0,R_m)} t + C(m) e^{(3-m)t/2} t^{(m-1)/2} \right] \|f\|_{X_1} \\
	& \qquad  + \left( 1 - \frac{\delta_m}{2} \right) \|f\|_{X_m} \,.
\end{align*}
We now choose $t_m>0$ such that 
\begin{equation*}
	\|a\|_{L_\infty(0,R_m)} t_m + C(m) e^{(3-m)t_m/2} t_m^{(m-1)/2} \le 1 - \frac{\delta_m}{2}
\end{equation*}
and deduce from the previous inequality (with $t=t_m$) that
\begin{equation*}
	\int_0^{t_m} \| B_m F(s)\|_{X_{1,m}}\ \mathrm{d}s \le  \left( 1 - \frac{\delta_m}{2} \right) \|f\|_{X_{1,m}} \,.
\end{equation*}
Consequently, using again \Cref{L.B.1}, $B_m$ is also a Miyadera perturbation of $A_{a,m}$ when $m\in (1,3)$. 
\end{proof}

We are now in a position to prove the first two statements in \Cref{T.1} for the operator $\mathbb{A}_m=A_{a,m}+B_m$:

\begin{proof}[Proof of \Cref{T.1}~(a)-(b)] We handle the cases $m=1$ and $m>1$ separately.

\medskip

\noindent{(a)}. If $m=1$, then $A_{a,1}\in \mathcal{G}_+(X_1,1,0)$ by \Cref{P.A.1}, so that it generates a substochastic semigroup in $X_1$. Moreover, $\dom(A_{a,1})\subset \dom(B_1)$ and $B_1$ is obviously positive due to the nonnegativity of $a$ and $b$. Also, for $f\in \dom(A_{a,1})$,
\begin{equation*}
M_1(A_{a,1}f+B_1f) = - \int_0^\infty f'(x)\ \mathrm{d}x - M_1(af) + M_1(B_1f) = 0
\end{equation*}
by \Cref{L.2.1}, \eqref{M.200}, and the Dirichlet boundary condition. Consequently, we infer from \cite{Voi1987} and \cite[Theorem~4.9.16]{BLL2020a} that there is an extension $\tilde{\mathbb{A}}_1\in \mathcal{G}_+(X_1,1,0)$ of $\mathbb{A}_1$. 

\medskip

\noindent{(b)}. Let $m>1$. Since $A_{a,m}\in\mathcal{H}(X_{1,m})$ by \Cref{P.A.1} and $B_m$ is a Miyadera perturbation of $A_{a,m}$ by \Cref{P.B.2}, it follows from \cite[Corollary~III.3.16 \& Exercise~III.3.17]{EnNa2000} that $\mathbb{A}_m=A_{a,m}+B_m\in \mathcal{H}(X_{1,m})$ with  $\dom(\mathbb{A}_m) = \dom(A_{a,m})$. Note that $\D(A_{a,m})$  and $\D(\mathbb{A}_m)$ are both Banach spaces and that $\D(A_{a,m})$ is continuously embedded in $\D(\mathbb{A}_m)$, since $B_m$ is $A_{a,m}$-bounded in $X_{1,m}$ according to  \Cref{L.B.1}. Consequently, $\D(\mathbb{A}_m)\doteq \D(A_{a,m})$ by the open mapping theorem.

We now check the positivity of $\left( e^{t\mathbb{A}_m} \right)_{t\ge 0}$, bearing in mind that we already know from \Cref{P.A.1} that $A_{a,m}$ is resolvent positive. Pick $\lambda>0$ sufficiently large. Then $\lambda - \mathbb{A}_m$ is invertible with inverse given by 
\begin{align*}
(\lambda-\mathbb{A}_m)^{-1}&=(\lambda-A_{a,m}-B_m)^{-1}  = (\lambda-A_{a,m})^{-1} \left( 1 - B_m (\lambda - A_{a,m})^{-1} \right)^{-1} \\
& = (\lambda-A_{a,m})^{-1} \sum_{j=0}^\infty \left[ B_m (\lambda - A_{a,m})^{-1} \right]^j\,,
\end{align*}
where the Neumann series converges since $B_m$ is a Miyadera perturbation of $A_{a,m}$, see the proof of \cite[Theorem~III.3.14]{EnNa2000}. Now, $B_m$ is obviously a positive operator on $X_{1,m}$ due to the non-negativity of $a$ and $b$, and the positivity of $(\lambda-\mathbb{A}_m)^{-1}$ directly follows from the above identity.

Finally, as in the proof of~(a), we have $M_1(\mathbb{A}_m f) = 0$
for any $f\in \dom(A_{a,m})$ by \Cref{L.2.1}, \eqref{M.200}, and the Dirichlet boundary condition, so that \eqref{M.100} immediately follows.
\end{proof}

\begin{proof}[Proof of \Cref{P.10}~(a)]
Let $m\ge 1$. The operator $A_{a,m}$ belongs to $\mathcal{G}_+(X_{1,m})\cap \mathcal{H}(X_{1,m})$ by \Cref{P.A.1}. Since $a\in L_\infty(0,\infty)$ and $b$ satisfies \eqref{B.0}, the operator $B_m$ is a positive bounded operator on $X_{1,m}$. On the one hand, it now follows from well-known perturbation results that $\mathbb{A}_m=A_{a,m}+B_m$ belongs to $\mathcal{H}(X_{1,m})$, see  \cite[Theorem~3.2.1]{Paz1983}. On the other hand, the same argument as in the proof of \Cref{T.1}~(b) ensures the positivity of $\left( e^{t\mathbb{A}_m} \right)_{t\ge 0}$. Finally, for $m=1$, it readily follows from \cite[Proposition~4.9.16]{BLL2020a} that $\mathbb{A}_1=\tilde{\mathbb{A}}_1\in \mathcal{G}_+(X_1,1,0)$, thereby completing the proof. 
\end{proof}

\section{Immediate Compactness of the Semigroup} \label{S.2.4}

We now turn to compactness properties of the semigroup  $(e^{t\mathbb{A}_m})_{t\ge 0}$ for $m>1$ as stated in \Cref{T.1}~(c). To avoid loss of compactness for large sizes, we further require $a$ to diverge to infinity for large sizes, thus excluding bounded overall fragmentation rates.
	
\begin{lemma}\label{L.C.1}
	Let $m\ge 1$ and assume that $a$ satisfies \eqref{A.0} and \eqref{C.0}. Then $\D(A_{a,m})\doteq \D(\mathbb{A}_m)$ is compactly embedded in $X_{1,m}$.
\end{lemma}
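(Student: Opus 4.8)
The plan is to establish the compact embedding $\D(A_{a,m}) \hookrightarrow X_{1,m}$ by showing that every bounded sequence in $\D(A_{a,m})$ has a subsequence converging in $X_{1,m}$, combining a local compactness argument (coming from the second-order differential operator, via Sobolev embeddings on bounded intervals) with a tail control argument (coming from the divergence of $a$ at infinity, which provides the weighted integrability needed to truncate). Note first that $\D(A_{a,m}) \doteq \D(\mathbb{A}_m)$ has already been observed in the proof of \Cref{T.1}(b), so it suffices to work with $\D(A_{a,m})$.

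Concretely, let $(f_n)_{n\ge1}$ be a sequence in $\dom(A_{a,m})$ with $\|f_n\|_{A_{a,m}} \le 1$ for all $n$. By \Cref{L.A.2}, both $\|f_n''\|_{X_{1,m}}$ and $\|a f_n\|_{X_{1,m}}$ are bounded, and by \Cref{L.2.1} the functions $f_n$ are bounded in $BC([0,\infty))$ with $f_n' \in L_1(0,\infty)$ bounded as well. For the local compactness: on each bounded interval $(0,R)$, the bounds on $f_n$, $f_n'$, and $f_n''$ in $L_1$ (hence, using the uniform $L_\infty$ bound on $f_n$ and $f_n'$, in $W^{2,1}(0,R)$) give, via the compact embedding $W^{2,1}(0,R) \hookrightarrow C^1([0,R])$ or at least $W^{2,1}(0,R)\hookrightarrow\hookrightarrow W^{1,1}(0,R)\hookrightarrow C([0,R])$, a subsequence converging uniformly on $[0,R]$; a diagonal argument over $R = k \to \infty$ produces a single subsequence (not relabeled) converging locally uniformly on $[0,\infty)$ to some limit $f$, with $\|f\|_{L_\infty} \le 1$ and $xf, x^m f$ locally integrable. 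For the tail control: given $\varepsilon > 0$, use \eqref{C.0} to choose $R_\varepsilon$ so large that $a(x) \ge 1/\varepsilon$ for $x \ge R_\varepsilon$; then
\begin{equation*}
\int_{R_\varepsilon}^\infty (x + x^m) |f_n(x)|\ \mathrm{d}x \le \varepsilon \int_{R_\varepsilon}^\infty (x+x^m) a(x) |f_n(x)|\ \mathrm{d}x \le \varepsilon \|a f_n\|_{X_{1,m}} \le C\varepsilon\,,
\end{equation*}
uniformly in $n$, and the same estimate holds for the limit $f$ by Fatou's lemma. Splitting $\|f_n - f\|_{X_{1,m}}$ into the contribution on $(0,R_\varepsilon)$ — which tends to $0$ by the locally uniform convergence together with $\int_0^{R_\varepsilon}(x+x^m)\,\mathrm{d}x < \infty$ — and the contribution on $(R_\varepsilon,\infty)$, which is $\le 2C\varepsilon$ uniformly, shows $f_n \to f$ in $X_{1,m}$. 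Since also $f \in X_{1,m}$ this gives the claimed compact embedding.

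The main obstacle is organizing the local compactness cleanly: one must be careful that $\dom(A_{a,m})$-bounded sequences are genuinely bounded in $W^{2,1}_{loc}$ with uniform constants up to the boundary $x=0$ (which is fine since $f_n(0)=0$ and \Cref{L.2.1} gives the uniform $L_\infty$ bounds on $f_n$ and $f_n'$), and to extract a single locally-uniformly-convergent subsequence via a diagonal procedure over an exhausting sequence of intervals. A minor secondary point is verifying that the limit $f$ actually lies in $\dom(\mathbb{A}_m)$ is \emph{not} needed for the statement — only the compactness of the embedding into $X_{1,m}$ is asserted — so one need not identify $f$ beyond membership in $X_{1,m}$, which follows from the uniform estimates and Fatou. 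Everything else is a routine $\varepsilon/3$-type splitting.
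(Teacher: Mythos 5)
Your overall strategy --- local compactness plus uniform tail smallness from the bound on $\|af_n\|_{X_{1,m}}$ and the divergence \eqref{C.0}, combined by an $\varepsilon/3$-splitting --- is sound and in the same spirit as the paper, which uses Arzel\`a--Ascoli on $[1/R,R]$ followed by Dunford--Pettis and Vitali. However, your local compactness step contains a genuine error. You assert that the sequence $(f_n)$ is bounded in $W^{2,1}(0,R)$ ``up to the boundary $x=0$.'' This is not available: the hypothesis controls $\|f_n''\|_{X_{1,m}}$, that is, $\int_0^\infty (x+x^m)|f_n''|\,\mathrm{d}x$, and the weight $x+x^m$ vanishes at $x=0$, so it gives no control on $\int_0^R |f_n''|\,\mathrm{d}x$ near the origin. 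For the same reason there is no uniform $L_\infty$ bound on $f_n'$; \Cref{L.2.1} only yields $x|f_n'(x)|\le \|f_n''\|_{X_1}$, which degenerates as $x\to 0$. So the invocation of a compact embedding $W^{2,1}(0,R)\hookrightarrow C^1([0,R])$ (or $W^{2,1}\hookrightarrow\hookrightarrow W^{1,1}\hookrightarrow C$) does not apply.

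The gap is fixable without changing your global structure. Either proceed as the paper does: apply Arzel\`a--Ascoli on intervals $[\delta,R]$ bounded away from the origin, using $|f_n|\le C$ and $|f_n'(x)|\le C/x\le C/\delta$ there, extract a diagonal subsequence converging pointwise on $(0,\infty)$, and then handle the contribution on $(0,R_\varepsilon)$ by dominated convergence with the dominating function $C(x+x^m)\in L_1(0,R_\varepsilon)$ (you only have pointwise, not uniform, convergence near $0$). Alternatively, note that you \emph{do} have a uniform $W^{1,1}(0,R)$ bound --- $\|f_n\|_{L_1(0,R)}\le RC$ from the $L_\infty$ bound, and $\|f_n'\|_{L_1(0,R)}\le\|f_n''\|_{X_1}$ from \eqref{P.1a} --- so Rellich--Kondrachov in $L_1(0,R)$ gives a subsequence converging in $L_1(0,R)$; since $(x+x^m)\le R+R^m$ on $(0,R)$ this already gives the needed local convergence in $X_{1,m}$. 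Either repair turns your proposal into a correct proof, and is comparable in effort to the paper's Dunford--Pettis/Vitali route, which has the advantage of avoiding the explicit diagonal extraction but is otherwise equivalent.
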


\begin{proof}
Recall that the relation $\D(A_{a,m})\doteq \D(\mathbb{A}_m)$ is established in the proof of \Cref{T.1}~(b).	Let $(f_n)_{n\ge 1}$ be a bounded sequence in $\D (A_{a,m})$. According to \Cref{L.2.1} and \Cref{L.A.2}, there is $C>0$ such that
	\begin{subequations}\label{C.1}
	\begin{align}
	\sup_{x\ge 0} \left\{ |f_n(x)| + x|f_n'(x)| \right\} & \le C\,, \qquad n\ge 1\,, \label{C.1a} \\
	\|f_n\|_{X_{1,m}}  + \|f_n''\|_{X_1} + \|a f_n\|_{X_m} & \le C\,, \qquad n\ge 1\,. \label{C.1b}
	\end{align}
\end{subequations}
On the one hand, we infer from \eqref{C.1a} and Arzel\`a-Ascoli's theorem that $(f_n)_{n\ge 1}$ is relatively compact in $C([1/R,R])$ for each $R>1$. There are thus a subsequence $(f_{n_j})_{j\ge 1}$ and $f\in C((0,\infty))$ such that
\begin{equation} 
	\lim_{j\to\infty} f_{n_j}(x) = f(x)\,, \qquad x\in (0,\infty)\,. \label{C.2}
\end{equation}
On the other hand, it follows from \eqref{C.1} that, if $R>1$ and $E$ is a measurable subset of $(0,\infty)$, then, for $n\ge 1$,
\begin{align}
	\int_E (x+x^m) |f_n(x)|\ \mathrm{d}x & \le \int_E (x+x^m) \mathbf{1}_{(0,R)}(x) |f_n(x)|\ \mathrm{d}x + \int_R^\infty (x+x^m) |f_n(x)|\ \mathrm{d}x \nonumber \\
	& \le (R+R^m) \|f_n\|_{L_\infty(0,\infty)} |E\cap (0,R)| \nonumber \\
	& \qquad + \dfrac{2}{\inf_{x\ge R} \{a(x)\}} \int_R^\infty x^m\ a(x) |f_n(x)|\ \mathrm{d}x \nonumber \\
	& \le 2 C R^m |E\cap (0,R)| + \dfrac{2C}{\inf_{x\ge R} \{a(x)\}}\,. \label{C.3}
\end{align}
A first consequence of \eqref{C.3} with $E=(R,\infty)$ is that
\begin{equation*}
	\sup_{n\ge 1} \int_R^\infty (x+x^m) |f_n(x)|\ \mathrm{d}x \le \dfrac{2C}{\inf_{x\ge R} \{a(x)\}}\,,
\end{equation*}
from which we deduce by \eqref{C.0} that
\begin{equation}
	\lim_{R\to\infty} \sup_{n\ge 1} \int_R^\infty (x+x^m) |f_n(x)|\ \mathrm{d}x = 0\,. \label{C.4}
\end{equation}
We next infer from \eqref{C.3} that, for $\delta>0$,
\begin{equation*}
	\eta(\delta) := \sup\left\{ \int_E (x+x^m) |f_n(x)|\ \mathrm{d}x\ :\ n \ge 1\,, \ E\in \mathcal{B}((0,\infty))\,,  \ |E|\le\delta \right\}
\end{equation*}
satisfies
\begin{equation*}
	\eta(\delta) \le 2 C R^m \delta + \dfrac{2C}{\inf_{x\ge R} \{a(x)\}}
\end{equation*}
for all $R>1$. We first let $\delta\to 0$ and then $R\to\infty$ in the above estimate and use once more \eqref{C.0} to conclude that
\begin{equation}
	\lim_{\delta\to 0} \eta(\delta) = 0\,. \label{C.5}
\end{equation}
Gathering \eqref{C.4} and \eqref{C.5} implies that the sequence $(f_n)_{n\ge 1}$ is uniformly integrable in $X_{1,m}$ and thus weakly compact in $X_{1,m}$ by Dunford-Pettis' theorem. This just established weak compactness in $X_{1,m}$, along with the pointwise convergence \eqref{C.2} and Vitali's theorem, entails that $(f_{n_j})_{j\ge 1}$ converges to $f$ in $X_{1,m}$, thereby completing the proof.
\end{proof}

We are now in a position to finish off the proof of \Cref{T.1}.

\begin{proof}[Proof of \Cref{T.1}~(c)]
Let $m>1$. By \Cref{L.C.1}, $(\lambda - \mathbb{A}_m)^{-1}$ is compact for $\lambda>0$ large enough and, since $m>1$, the analyticity of $\left( e^{t\mathbb{A}_m} \right)_{t\ge 0}$ implies that it is continuous with respect to the operator norm for positive times \cite[Lemma~2.4.2]{Paz1983}. We now may apply \cite[Theorem~2.3.3]{Paz1983} to conclude that $\left( e^{t\mathbb{A}_m} \right)_{t\ge 0}$ is immediately compact.
\end{proof}

The compactness result of \Cref{L.C.1} is not valid under the sole assumption~\eqref{A.0} on $a$. 
In particular, we show that it fails when $a$ is bounded.

\begin{lemma}\label{L.C.2}
    Let $m\ge 1$ and assume that $a\in L_\infty(0,\infty)$. Then the embedding of  $\D(A_{a,m})\doteq D(\mathbb{A}_m)$ in $X_{1,m}$ is not compact.
\end{lemma}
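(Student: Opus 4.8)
The plan is to construct a sequence that is bounded in $\D(A_{a,m})$ but possesses no subsequence converging in $X_{1,m}$, exploiting the fact that when $a$ is bounded nothing forces the mass of elements of $\dom(A_{a,m})$ to remain concentrated near the origin, so a fixed bump may be pushed off to infinity. As a first step, I would record that $\D(A_{a,m})\doteq\D(\mathbb{A}_m)$: when $a\in L_\infty(0,\infty)$ the operator $B_m$ is bounded on $X_{1,m}$ (as noted in the proof of \Cref{P.10}~(a)), so $\mathbb{A}_m$ is a bounded perturbation of $A_{a,m}$ and the two graph norms are equivalent. It therefore suffices to show that the embedding $\D(A_{a,m})\hookrightarrow X_{1,m}$ is not compact.

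To this end, fix $\varphi\in C_c^\infty((0,\infty))$ with $\varphi\ge 0$, $\varphi\not\equiv 0$, and $\mathrm{supp}\,\varphi\subset(1,2)$, and set $f_n(x):=n^{-m}\varphi(x-n)$ for $n\ge 1$ and $x>0$. Each $f_n$ belongs to $C_c^\infty((0,\infty))$ with $\mathrm{supp}\,f_n\subset(n+1,n+2)$, so that $f_n(0)=0$, $f_n''\in X_{1,m}$, and $af_n\in X_{1,m}$ (because $a\in L_\infty(0,\infty)$); hence $f_n\in\dom(A_{a,m})$. Using the elementary bounds $n\le y+n\le 2n$ for $y\in(1,2)$ and $n\ge 1$, one checks that $\sup_{n\ge 1}\big(\|f_n\|_{X_{1,m}}+\|f_n''\|_{X_{1,m}}\big)<\infty$ — here the normalization by $n^{-m}$ is precisely what compensates the growth of the weights $x$ and $x^m$ along the translates — while $\|af_n\|_{X_{1,m}}\le\|a\|_{L_\infty(0,\infty)}\|f_n\|_{X_{1,m}}$; consequently $(f_n)_{n\ge 1}$ is bounded in $\D(A_{a,m})$. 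The same estimates also yield $\|f_n\|_{X_{1,m}}\ge\|\varphi\|_{L_1(0,\infty)}>0$ for every $n\ge 1$.

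It remains to verify that $(f_n)_{n\ge 1}$ has no subsequence converging in $X_{1,m}$. Since $\mathrm{supp}\,f_n\subset(n+1,n+2)$, we have $f_n(x)\to 0$ as $n\to\infty$ for every $x>0$; thus, if a subsequence $(f_{n_j})_{j\ge 1}$ converged to some $g$ in $X_{1,m}$, then, extracting a further subsequence converging almost everywhere on $(0,\infty)$, we would obtain $g=0$ a.e., whence $\|f_{n_j}\|_{X_{1,m}}\to\|g\|_{X_{1,m}}=0$, contradicting $\|f_n\|_{X_{1,m}}\ge\|\varphi\|_{L_1(0,\infty)}>0$. Therefore the embedding $\D(A_{a,m})\doteq\D(\mathbb{A}_m)\hookrightarrow X_{1,m}$ is not compact. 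There is no serious difficulty in this argument; the only point that deserves attention is the rescaling factor $n^{-m}$, which ensures that the three contributions $\|f_n\|_{X_{1,m}}$, $\|f_n''\|_{X_{1,m}}$, and $\|af_n\|_{X_{1,m}}$ to the graph norm remain simultaneously bounded.
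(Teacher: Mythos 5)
Your proof is correct and takes essentially the same route as the paper: translate a fixed compactly supported bump off to infinity, normalize by a factor (you use $n^{-m}$, the paper uses $(n+n^m)^{-1}$) that compensates the growth of the weights so the graph norm stays bounded, and observe that the $X_{1,m}$-norm stays away from zero while the functions vanish pointwise, so no subsequence can converge. The only cosmetic difference is how the final contradiction is phrased (you extract an a.e.\ convergent subsequence; the paper invokes $\|\varphi_n\|_{L_\infty}\to 0$), and your brief justification of $\D(A_{a,m})\doteq\D(\mathbb{A}_m)$ via boundedness of $B_m$ is a harmless addition.
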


\begin{proof}
    Let $\varphi\in C_c^\infty(\mathbb{R})$ be such that $0\le \varphi\le 1$, $\mathrm{supp}\ \varphi\subset [-1,1]$, and $\|\varphi\|_{L_1(\mathbb{R})}=1$. We fix $m\ge 1$ and set
    \begin{equation*}
        \varphi_n(x) := \frac{1}{n+n^m} \varphi(x-n)\,, \qquad x\in (0,\infty)\,, \quad n\ge 1\,.
    \end{equation*}
Straightforward computations show that
\begin{align*}
    \|\varphi_n\|_{X_{1,m}} + \|a \varphi_n\|_{X_{1,m}} & \le \left( 1 + \|a\|_{L_\infty(0,\infty)} \right) \|\varphi_n\|_{X_{1,m}} \\
    & \le \left( 1 + \|a\|_{L_\infty(0,\infty)} \right) \left( 1 + \int_{-1}^1 \frac{1+m2^m n^{m-1}}{n+n^m} \varphi(y)\ \mathrm{d}y \right) \\
    & \le (2 + m 2^m) \left( 1 + \|a\|_{L_\infty(0,\infty)} \right)
\end{align*}
and
\begin{align*}
    \|\varphi_n''\|_{X_{1,m}} \le \frac{n+1 + (n+1)^m}{n+n^m} \int_{-1}^1 |\varphi''(y)|\ \mathrm{d}y \le 2^m \|\varphi''\|_{L_1(\mathbb{R})}\,,
\end{align*}
as well as
\begin{equation}
    \lim_{n\to\infty} \|\varphi_n\|_{X_{1,m}} = 1\,, \qquad \lim_{n\to\infty} \|\varphi_n\|_{L_\infty(0,\infty)} = 0\,. \label{NC.1}
\end{equation}
Therefore, the sequence $(\varphi_n)_{n\ge 1}$ is bounded in $\D(A_{a,m})$ but cannot converge in $X_{1,m}$ due to \eqref{NC.1}.
\end{proof}

\begin{proof}[Proof of \Cref{P.10}~(b)]
Let $m\ge 1$ and $a\in L_\infty(0,\infty)$. Since $\D(\mathbb{A}_m)$ is not compactly embedded in $X_{1,m}$ by \Cref{L.C.2}, the resolvent $(\lambda-\mathbb{A}_m)^{-1}$ is not compact. Hence, \cite[Theorem 2.3.3]{Paz1983} implies that the semigroup $(e^{t\mathbb{A}_m})_{t\ge 0}$ is not compact.
\end{proof}

\section{Steady States and Convergence} \label{S.2.5}

Throughout this section, we assume that $a$ and $b$ satisfy \eqref{A.0}, \eqref{B.0}, \eqref{B.10}, and \eqref{C.0}, and that $a>0$ and $b>0$. 

We begin with the construction of stationary solutions with the help of Schauder's fixed point theorem.

\begin{lemma}\label{L.100}
There is a unique nonnegative
\begin{equation*}
\psi_1\in \bigcap_{r\ge 1} \dom(\mathbb{A}_r)
\end{equation*} 
such that $M_1(\psi_1)=1$ and $\mathrm{ker}(\mathbb{A}_m) = \mathrm{ker}(\mathbb{A}_m^2) = \mathbb{R}\psi_1$ for all $m\ge 1$. 
\end{lemma}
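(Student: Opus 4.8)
The goal is to produce a nonnegative stationary solution $\psi_1$ with unit mass, to show it lies in $\bigcap_{r\ge 1}\dom(\mathbb{A}_r)$, and to identify $\ker(\mathbb{A}_m)=\ker(\mathbb{A}_m^2)=\mathbb{R}\psi_1$. The main tool for existence is Schauder's fixed point theorem, applied inside a bounded closed convex set of $X_{1,m}$ on which the semigroup acts; the hypotheses \eqref{A.0}, \eqref{B.0}, \eqref{B.10}, \eqref{C.0} together with $a>0$, $b>0$ are exactly what is needed to run this, using the immediate compactness from \Cref{T.1}(c).

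\emph{Step 1: An invariant set and a fixed point.} First I would construct a nonempty, bounded, closed, convex subset $\mathcal{S}$ of $X_{1,m}^+$ (for a fixed $m>1$) that is invariant under $e^{t\mathbb{A}_m}$ for all $t\ge 0$. The natural choice is something like $\mathcal{S}=\{f\in X_{1,m}^+ : M_1(f)=1,\ \|f\|_{X_{1,m}}\le \rho\}$ for a suitably large $\rho$: the mass constraint is preserved by \eqref{M.100}, positivity is preserved by \Cref{T.1}(b), and the $X_{1,m}$-bound is preserved once $\rho$ is chosen using the decay of the higher moments. The key estimate here comes from feeding a function into Kato's inequality \Cref{L.2.3} with $\ell(x)=x^m$ together with the strict domination \Cref{L.B.1.5}: one obtains a differential inequality of the form $\frac{d}{dt}\|F\|_{X_m}+\delta_m\|aF\|_{X_m}\le m(m-1)\|F\|_{X_{m-2}}$, and since $a$ diverges at infinity by \eqref{C.0}, the term $\|aF\|_{X_m}$ dominates $\|F\|_{X_m}$ up to a contribution controlled on a bounded $x$-interval by $\|F\|_{X_1}=M_1(f)=1$; this yields an a priori bound $\limsup_{t\to\infty}\|F(t)\|_{X_m}\le \rho_0$ and, after adjusting, invariance of $\mathcal{S}$. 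Then for any fixed $t_0>0$ the map $f\mapsto e^{t_0\mathbb{A}_m}f$ sends $\mathcal{S}$ into itself; it is continuous, and by the immediate compactness of $(e^{t\mathbb{A}_m})_{t\ge 0}$ (\Cref{T.1}(c)) it is compact. Schauder's theorem gives a fixed point $f_{t_0}\in\mathcal{S}$. A standard argument (take a sequence $t_0=1/n$, use compactness to extract a limit, and exploit the semigroup property / continuity in $t$) upgrades these to a common fixed point $\psi_1$ of the whole semigroup, hence $\psi_1\in\dom(\mathbb{A}_m)$ with $\mathbb{A}_m\psi_1=0$, $\psi_1\ge 0$, $M_1(\psi_1)=1$.

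\emph{Step 2: Regularity across the scale.} Next I would show $\psi_1\in\dom(\mathbb{A}_r)$ for every $r\ge 1$. Since $\psi_1$ is a fixed point of $e^{t\mathbb{A}_m}$, the a priori moment bounds of Step 1 (which can be run for every $r>1$ simultaneously, because $\psi_1$ has fixed mass and $a\to\infty$) show $\psi_1\in X_{1,r}$ for all $r\ge 1$; combined with $a\psi_1\in X_{1,r}$ from the same estimate and $\psi_1''=a\psi_1-B_r\psi_1\in X_{1,r}$ (using \Cref{L.B.1}), together with the boundary condition $\psi_1(0)=0$ inherited via \Cref{L.2.1}, this places $\psi_1\in\dom(A_{a,r})=\dom(\mathbb{A}_r)$ for all $r\ge 1$, and $\mathbb{A}_r\psi_1=0$ for all such $r$.

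\emph{Step 3: One-dimensionality of the kernel.} Finally I would prove $\ker(\mathbb{A}_m)=\ker(\mathbb{A}_m^2)=\mathbb{R}\psi_1$. The inclusion $\mathbb{R}\psi_1\subset\ker(\mathbb{A}_m)\subset\ker(\mathbb{A}_m^2)$ is immediate. For the reverse, suppose $g\in\ker(\mathbb{A}_m^2)$; then $h:=\mathbb{A}_m g\in\ker(\mathbb{A}_m)$. Since $0=M_1(\mathbb{A}_m g)=M_1(h)$ and (by the mass identity) elements of $\ker(\mathbb{A}_m)$ with zero mass must vanish once we know the kernel is spanned by a strictly positive element — this is the crux — it suffices to handle $\ker(\mathbb{A}_m)$ itself and then bootstrap. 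So let $g\in\ker(\mathbb{A}_m)$, i.e. $g''=ag-B_mg$, $g(0)=0$. Positivity plus irreducibility is the engine: because $a>0$ and $b>0$, the resolvent $(\lambda-\mathbb{A}_m)^{-1}$ is a positive \emph{irreducible} compact operator for large $\lambda$ (the strict positivity of $b$ makes $B_m$ strictly positive, which propagates through the Neumann series in the proof of \Cref{T.1}(b)); by the Krein--Rutman / de Pagter theorem the spectral bound $s(\mathbb{A}_m)=0$ is a simple eigenvalue with strictly positive eigenvector, necessarily $\psi_1$, and moreover $0$ is algebraically simple, which gives both $\ker(\mathbb{A}_m)=\mathbb{R}\psi_1$ and $\ker(\mathbb{A}_m^2)=\mathbb{R}\psi_1$ at once. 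The independence of the resulting $\psi_1$ from $m$ follows because $e^{t\mathbb{A}_r}=e^{t\mathbb{A}_m}|_{X_{1,r}}$-type compatibility (inherited from \Cref{P.A.1} and the perturbation construction) forces the stationary states to coincide after normalization.

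\emph{Main obstacle.} The delicate point is Step 3: establishing irreducibility of the semigroup (equivalently, strict positivity / positivity-improving property of the resolvent) from $a>0$, $b>0$, and then invoking the Krein--Rutman theory in the form that yields \emph{algebraic} simplicity of the eigenvalue $0$ — not merely geometric simplicity — since only that delivers $\ker(\mathbb{A}_m^2)=\mathbb{R}\psi_1$. Verifying that $B_m$ with $b>0$ really makes the resolvent positivity-improving on the Banach lattice $X_{1,m}$ (rather than merely positive) requires a short but careful argument tracing the support-spreading through one application of $B_m$ followed by the absorption resolvent $(\lambda-A_{a,m})^{-1}$, whose kernel is strictly positive off the diagonal by \Cref{P.2.4}/\Cref{P.A.1}. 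I expect this to be the technical heart of the lemma; the Schauder argument in Steps 1--2, while requiring care with the moment estimates, is comparatively routine given the tools already assembled.
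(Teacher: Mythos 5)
Your Steps~1 and~2 match the paper's construction closely: the paper also uses Kato's inequality (\Cref{L.2.3}) with $\ell(x)=x^m$, the strict domination of \Cref{L.B.1.5}, the divergence~\eqref{C.0} to split the $\|aF\|_{X_m}$ term across a threshold $x_*$, Young's inequality, and a Gronwall estimate to build an invariant closed convex set $\mathcal{C}_m := \{ f\in X_{1,m}^+ : M_1(f)=1,\ M_m(f)\le\mu_m\}$ for $m\ge 3$, then invokes Schauder together with the immediate compactness from \Cref{T.1}(c) (via the $t_0\to 0$ trick you describe, cited from Amann and Gamba--Panferov--Villani). The paper then produces a stationary solution $\psi_m$ for each $m\ge 3$, extends to $m\in[1,3)$ by inclusion, and uses uniqueness of the normalized nonnegative stationary solution in $X_1$ to identify all the $\psi_m$ with a single $\psi_1\in\bigcap_{r\ge 1}\dom(\mathbb{A}_r)$ --- a minor variant of your single-fixed-point bootstrap, but the same idea.

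Where you genuinely diverge is Step~3, specifically the core fact that $\ker(\mathbb{A}_m)=\mathbb{R}\psi_1$. The paper does \emph{not} argue via irreducibility and Krein--Rutman/de~Pagter; instead it deduces one-dimensionality of the kernel from dissipativity estimates on $\mathbb{A}_1$ in $X_1$, deferring the details to the cited proofs of Lemma~3.5 in \cite{EMRR2005} and Proposition~3 in \cite{Lau2004} (the standard argument: if $g\in\ker(\mathbb{A}_1)$, then Kato's inequality plus $M_1(\mathbb{A}_1\cdot)=0$ force $g_\pm$ to be stationary, hence proportional to $\psi_1$ by uniqueness of the normalized nonnegative steady state). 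The algebraic step $\ker(\mathbb{A}_m^2)\subset\ker(\mathbb{A}_m)$ is then the same mass-conservation one-liner you identify: $\mathbb{A}_m f=\mu\psi_1$ forces $\mu=M_1(\mathbb{A}_m f)=0$. Your alternative --- establishing that $(\lambda-\mathbb{A}_m)^{-1}$ is a compact, positive, \emph{irreducible} operator (irreducibility coming from $a>0$, $b>0$, and the strictly positive heat kernel in \Cref{P.2.4}) and invoking de~Pagter plus Krein--Rutman to get $s(\mathbb{A}_m)=0$ as an algebraically simple eigenvalue --- is also sound and has the advantage of delivering both $\ker(\mathbb{A}_m)=\mathbb{R}\psi_1$ and $\ker(\mathbb{A}_m^2)=\mathbb{R}\psi_1$ at once, but it pays for that by requiring the support-spreading argument you flag as the technical heart, which the paper sidesteps entirely. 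Note also that your route quietly needs $s(\mathbb{A}_m)=0$ before the identification $\ker(\mathbb{A}_m)=\mathbb{R}\psi_1$; this is available from $\mathbb{A}_m\subset\tilde{\mathbb{A}}_1\in\mathcal{G}(X_1,1,0)$ together with the existence of the nonzero fixed point $\psi_1$, but it should be stated, since the paper itself defers the spectral-bound discussion to \Cref{L.101} which \emph{uses} \Cref{L.100}.
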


\begin{proof} We split the proof into three steps.

\medskip

\noindent\textbf{Step~1.}  The uniqueness of a solution $\psi\in \dom(\mathbb{A}_1)$ to $\mathbb{A}_1\psi=0$ satisfying $M_1(\psi)=1$ relies on the dissipativity properties of $\mathbb{A}_1$ in $X_1$ and can be shown exactly as in the proofs of \cite[Lemma~3.5]{EMRR2005} and \cite[Proposition~3]{Lau2004}, to which we refer. 

\medskip

\noindent\textbf{Step~2.} We now turn to the existence part. 
Let $m\ge 3$ and consider $f\in X_{1,m}^+$ satisfying $M_1(f)=1$. Setting $F(t) := e^{t\mathbb{A}_m}f$ for $t\ge 0$, it readily follows from \Cref{T.1}~(b) that
\begin{equation}
F(t)\ge 0 \;\text{ and }\; M_1(F(t))=1\,, \qquad t\ge 0\,. \label{Z.1}
\end{equation}
Next, by \eqref{B.11} and Fubini's theorem, 
\begin{align*}
\frac{\mathrm{d}}{\mathrm{d}t} M_m(F(t)) & = - m \int_0^\infty x^{m-1} \partial_x F(t,x)\ \mathrm{d}x - \int_0^\infty x^m a(x) F(t,x)\ \mathrm{d}x \\
& \qquad + \int_0^\infty a(y) F(t,y) \int_0^y x^m b(x,y)\ \mathrm{d}x\mathrm{d}y \\
& \le m(m-1) M_{m-2}(F(t)) - \delta_m M_m(aF(t))\,.
\end{align*}
Owing to \eqref{C.0}, there is $x_*>0$ such that $a(x)\ge 1$ for $x\ge x_*$. Consequently, using \eqref{Z.1},
\begin{align*}
\frac{\mathrm{d}}{\mathrm{d}t} M_m(F(t)) + \delta_m M_m(F(t)) & \le \frac{\mathrm{d}}{\mathrm{d}t} M_m(F(t)) + \delta_m \int_{x_*}^\infty x^m F(t,x)\ \mathrm{d}x + \delta_m \int_0^{x_*} x^m F(t,x)\ \mathrm{d}x \\
& \le \frac{\mathrm{d}}{\mathrm{d}t} M_m(F(t)) + \delta_m M_m(aF(t)) + \delta_m x_*^{m-1} \int_0^{x_*} x F(t,x)\ \mathrm{d}x \\
& \le m(m-1) M_{m-2}(F(t)) + \delta_m x_*^{m-1} \,.
\end{align*}
Since $m\ge 3$, we now deduce from Young's inequality that
\begin{align*}
\frac{\mathrm{d}}{\mathrm{d}t} M_m(F(t)) + \delta_m M_m(F(t)) & \le \frac{\delta_m}{2} M_m(F(t)) + 2m \left( \frac{2m(m-3)}{\delta_m} \right)^{(m-3)/2} M_1(F(t)) \\
& \qquad + \delta_m x_*^{m-1} \,.
\end{align*}
Hence, by \eqref{Z.1},
\begin{equation*}
\frac{\mathrm{d}}{\mathrm{d}t} M_m(F(t)) + \frac{\delta_m}{2} M_m(F(t)) \le   2m \left( \frac{2m(m-3)}{\delta_m} \right)^{(m-3)/2} + \delta_m x_*^{m-1}=: \frac{\delta_m \mu_m}{2}\,, \qquad t\ge 0\,.
\end{equation*}
After integration with respect to time, we conclude that
\begin{equation}
M_m(F(t)) \le \max\left\{ M_m(f) , \mu_m \right\}\,, \qquad t\ge 0\,. \label{Z.2}
\end{equation}
Now, introducing
\begin{equation*}
\mathcal{C}_m := \{ f\in X_{1,m}^+\ :\ M_1(f)=1\,, \ M_m(f) \le \mu_m\}\,, 
\end{equation*}
which is  a closed convex subset of $X_{1,m}$,
an immediate consequence of \eqref{Z.1} and \eqref{Z.2} is that 
\begin{equation*}
e^{t\mathbb{A}_m}\mathcal{C}_m \subset \mathcal{C}_m\,, \qquad t\ge 0\,.
\end{equation*}
Owing to the compactness of $e^{t\mathbb{A}_m}$ in $X_{1,m}$ for all $t>0$, see \Cref{T.1}~(c), we argue as in the proofs of \cite[Theorem~22.13]{Ama1990} and \cite[Theorem~5.2]{GPV2004} to deduce from Schauder's fixed point theorem that there is $\psi_m\in \mathcal{C}_m$ such that
\begin{equation*}
e^{t\mathbb{A}_m}\psi_m = \psi_m\,, \qquad t\ge 0\,.
\end{equation*}
Equivalently, $\mathbb{A}_m\psi_m=0$ and we have thus shown  the existence of a stationary solution to \eqref{FD.1} for $m\ge 3$. 
Obviously, $\psi_3$ also belongs to $\dom(\mathbb{A}_m)$ and satisfies $\mathbb{A}_m\psi_3=0$  for any $m\in [1,3)$. Thus, there is at least one stationary solution $\psi_m$ to \eqref{FD.0} for any $m\ge 1$. Obviously, $\psi_m\in \dom(\mathbb{A}_1)$ solves $\mathbb{A}_1\psi_m=0$ for every $m\ge 1$, and we infer from \textbf{Step~1} that $\psi_m=\psi_1$ for every $m\ge 1$.

\medskip

\noindent\textbf{Step~3.} We finally identify $\mathrm{ker}(\mathbb{A}_m^2)$. To this end, let $f\in \mathrm{ker}(\mathbb{A}_m^2)$. Then $\mathbb{A}_m f$ belongs to $\mathrm{ker}(\mathbb{A}_m)$, so that \textbf{Step~2} implies that there is $\mu\in\mathbb{C}$ such that $\mathbb{A}_m f = \mu \psi_1$. Therefore,
\begin{equation*}
\mu = \mu M_1(\psi_1) = M_1(\mathbb{A}_m f)=0\,.
\end{equation*}
Hence, $f\in \mathrm{ker}(\mathbb{A}_m)$.
\end{proof}

We now supply refined information on the spectrum of $\mathbb{A}_m$ for $m>1$. 

\begin{lemma}\label{L.101}
Let $m>1$. The spectrum $\sigma(\mathbb{A}_m)$ of $\mathbb{A}_m$ only consists of isolated eigenvalues and satisfies 
\begin{equation}
\sigma(\mathbb{A}_m) \subset \{0\} \cup \{ \lambda\in \mathbb{C}\ :\ \mathsf{Re}\, \lambda < -\varepsilon_m \} \label{Sp.2}
\end{equation}
for some $\varepsilon_m>0$. Moreover, $s(\mathbb{A}_m)=0$ is a simple eigenvalue of $\mathbb{A}_m$. 
\end{lemma}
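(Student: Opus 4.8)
The plan is to build on three properties of $\mathbb{A}_m$ already at hand: the immediate compactness of $\left(e^{t\mathbb{A}_m}\right)_{t\ge 0}$ (\Cref{T.1}~(c)), its positivity, and the mass conservation \eqref{M.100} together with the identity $M_1(\mathbb{A}_m g)=0$ for $g\in\dom(\mathbb{A}_m)$ noted in the proof of \Cref{T.1}~(b). First I would record the spectral structure coming from compactness: since $\D(\mathbb{A}_m)\doteq\D(A_{a,m})$ embeds compactly in $X_{1,m}$ (\Cref{L.C.1}), the resolvent $R(\lambda,\mathbb{A}_m)$ is compact whenever it exists, so $\sigma(\mathbb{A}_m)$ consists only of isolated eigenvalues of finite algebraic multiplicity, each a pole of $R(\cdot,\mathbb{A}_m)$; and since $\left(e^{t\mathbb{A}_m}\right)_{t\ge 0}$ is analytic (\Cref{T.1}~(b)), $\sigma(\mathbb{A}_m)$ lies in a sector opening to the left, so each half-plane $\{\mathsf{Re}\,\lambda\ge\gamma\}$ contains only finitely many of these eigenvalues. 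Moreover $0\in\sigma(\mathbb{A}_m)$ because $\mathbb{A}_m\psi_1=0$ by \Cref{L.100}.

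Next I would show that $s(\mathbb{A}_m)=0$. If $\mathbb{A}_m f=\lambda f$ with $f\in\dom(\mathbb{A}_m)\setminus\{0\}$, then positivity of the semigroup gives $e^{t\,\mathsf{Re}\,\lambda}\,|f|=\bigl|e^{t\mathbb{A}_m}f\bigr|\le e^{t\mathbb{A}_m}|f|$ for $t\ge 0$, and applying the mass-conserving functional $M_1$ yields $e^{t\,\mathsf{Re}\,\lambda}\,\|f\|_{X_1}\le\|f\|_{X_1}$; since $\|f\|_{X_1}>0$, letting $t\to\infty$ forces $\mathsf{Re}\,\lambda\le 0$. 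With $0\in\sigma(\mathbb{A}_m)$ this gives $s(\mathbb{A}_m)=0$. I would also record at this point that $\left(e^{t\mathbb{A}_m}\right)_{t\ge 0}$ is positivity improving: by domain monotonicity and the strict positivity of the Dirichlet heat kernel on bounded intervals the absorption semigroup $\left(e^{tA_{a,m}}\right)_{t\ge 0}$ has an everywhere positive kernel, and $e^{t\mathbb{A}_m}\ge e^{tA_{a,m}}\ge 0$ since $B_m\ge 0$; in particular, from $\psi_1=e^{t\mathbb{A}_m}\psi_1$ with $\psi_1\ge 0$ and $\psi_1\not\equiv 0$ we deduce $\psi_1>0$ on $(0,\infty)$.

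It remains to check that $0$ is the only spectral point on the imaginary axis and that it is simple. Suppose $\mathbb{A}_m f=i\beta f$ with $f\ne 0$ and $\beta\in\mathbb{R}$. The estimate above with $\mathsf{Re}\,\lambda=0$ gives $|f|\le e^{t\mathbb{A}_m}|f|$ and $M_1(|f|)=M_1\bigl(e^{t\mathbb{A}_m}|f|\bigr)$, so the nonnegative function $e^{t\mathbb{A}_m}|f|-|f|$ has vanishing mass and hence vanishes; thus $|f|\in\ker(\mathbb{A}_m)=\mathbb{R}\psi_1$, i.e.\ $|f|=c\psi_1$ with $c=M_1(|f|)>0$. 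Since $\psi_1>0$ and $f,\psi_1\in C^1((0,\infty))$ by \Cref{L.2.1}, we may write $f=c\psi_1 e^{i\theta}$ for a real function $\theta$; inserting this into $\mathbb{A}_m f=i\beta f$, taking the real part, and subtracting $\mathbb{A}_m\psi_1=0$ leads to
\begin{equation*}
\psi_1(x)\,\theta'(x)^2+\int_x^\infty a(y)\,b(x,y)\,\psi_1(y)\,\bigl[1-\cos\!\bigl(\theta(y)-\theta(x)\bigr)\bigr]\ \mathrm{d}y=0\,,\qquad x>0\,,
\end{equation*}
where both terms are nonnegative; hence $\theta$ is constant, $f$ is a scalar multiple of $\psi_1$, so $i\beta f=\mathbb{A}_m f=0$ and $\beta=0$, and therefore $\sigma(\mathbb{A}_m)\cap i\mathbb{R}=\{0\}$. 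Simplicity of $0$ follows by iterating \Cref{L.100}: if $\mathbb{A}_m^{k+1}g=0$, then $\mathbb{A}_m g\in\ker(\mathbb{A}_m^k)=\mathbb{R}\psi_1$, say $\mathbb{A}_m g=c\psi_1$, whence $c=c\,M_1(\psi_1)=M_1(\mathbb{A}_m g)=0$ and $g\in\ker(\mathbb{A}_m)$; so the generalized eigenspace of $0$ equals $\ker(\mathbb{A}_m)=\mathbb{R}\psi_1$, which is one-dimensional, making $0$ a simple eigenvalue and a first-order pole of the resolvent. Finally, since only finitely many eigenvalues satisfy $\mathsf{Re}\,\lambda\ge -1$ and, by the above, each $\lambda\in\sigma(\mathbb{A}_m)\setminus\{0\}$ has $\mathsf{Re}\,\lambda<0$, there is $\varepsilon_m>0$ with $\mathsf{Re}\,\lambda<-\varepsilon_m$ for all such $\lambda$, which is \eqref{Sp.2}.

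The main difficulty is the rigidity step for purely imaginary eigenvalues: one must secure $\psi_1>0$, arrange enough regularity to define the phase $\theta$ and to differentiate $c\psi_1 e^{i\theta}$ inside the equation, and then extract the displayed nonnegativity identity from the nonlocal term — it is exactly there that positivity of the semigroup, the mass identity \eqref{M.100}, and the standing hypotheses $a>0$ and $b>0$ all enter. Alternatively, this step together with the simplicity of $0$ can be read off from the Perron--Frobenius theory for irreducible positive semigroups (the semigroup being positivity improving, hence irreducible), which forces the peripheral spectrum to reduce to $\{s(\mathbb{A}_m)\}$ and $s(\mathbb{A}_m)$ to be an algebraically simple eigenvalue; see, e.g., \cite{BaAr2006,EnNa2000}.
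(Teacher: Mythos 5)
Your proposal is correct, but it takes a genuinely different route for the crucial step that the peripheral spectrum reduces to $\{0\}$. Where the paper invokes a structure theorem for positive semigroups, namely \cite[Theorem~8.14]{CHADP1987}, which says that for a positive semigroup with $s(\mathbb{A}_m)=0$ a pole of the resolvent the boundary spectrum $\sigma(\mathbb{A}_m)\cap i\mathbb{R}$ is either $\{0\}$ or infinite, and then rules out the second alternative by the finiteness of eigenvalues in right half-planes coming from immediate compactness, you instead run a hands-on rigidity argument: establish $\psi_1>0$ from positivity-improvement of the absorption semigroup, show that a purely imaginary eigenvalue forces $|f|=c\psi_1$ via mass conservation, then introduce a phase $\theta$ and extract the nonnegativity identity forcing $\theta$ constant. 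Your computation is essentially right, but it commits you to two facts the paper never needs: strict positivity of the absorption kernel (which for unbounded $a$ requires a Feynman--Kac or Trotter argument you only gesture at), and enough regularity of $f$ and $\psi_1$ to lift the argument to $\theta\in C^1$ and differentiate twice -- you address both, and \Cref{L.2.1} does supply the regularity, but the paper's cyclicity-of-boundary-spectrum route avoids these entirely and is noticeably more economical. Your alternative remark that this is Perron--Frobenius for irreducible positive semigroups is indeed a cleaner packaging of your direct argument, though still strictly more structure (irreducibility) than the paper uses. A further, smaller divergence: for $s(\mathbb{A}_m)=0$ the paper passes to the contraction extension $\tilde{\mathbb{A}}_1$ on $X_1$ and quotes \cite[Corollary~1.3.6]{Paz1983}, whereas you derive $\mathsf{Re}\,\lambda\le 0$ directly from $|e^{t\mathbb{A}_m}f|\le e^{t\mathbb{A}_m}|f|$ and conservation of $M_1$; your version is self-contained and arguably more transparent. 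The simplicity of $0$ and the spectral-gap conclusion \eqref{Sp.2} are obtained by the same means as in the paper.
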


\begin{proof}
Owing to the immediate compactness of $(e^{t\mathbb{A}_m})_{t\ge 0}$, see \Cref{T.1}~(c), and \cite[Corollary~V.3.2]{EnNa2000}, the spectrum $\sigma(\mathbb{A}_m)$ only consists of isolated eigenvalues which are poles of the resolvent with finite algebraic multiplicity. Moreover, for any $r\in\mathbb{R}$, 
\begin{equation}
\#\{\lambda\in \sigma(\mathbb{A}_m)\ :\ \mathsf{Re}\,\lambda \ge r\} < \infty\,. \label{Sp.0}
\end{equation}

We next claim that $s(\mathbb{A}_m)=0$. Indeed, since $\mathbb{A}_m \subset \mathbb{A}_1 \subset \tilde{\mathbb{A}}_1$ and $\tilde{\mathbb{A}}_1\in \mathcal{G}(X_1,1,0)$ by \Cref{T.1}, any eigenvalue of $\mathbb{A}_m$ is also an eigenvalue of $\tilde{\mathbb{A}}_1$ and it follows from \cite[Corollary~1.3.6]{Paz1983} that 
$$
\{ \lambda\in\mathbb{C}\ :\ \mathsf{Re}\, \lambda > 0 \} \subset \rho(\tilde{\mathbb{A}}_1) \,. 
$$
Consequently, any eigenvalue of $\mathbb{A}_m$ has a non-positive real part. Thus, 
\begin{equation}\label{Sp.1}
\sigma(\mathbb{A}_m)\subset \{ \lambda\in\mathbb{C}\ :\ \mathsf{Re}\, \lambda \le 0 \}\,.
\end{equation}
Since zero belongs to the spectrum of $\mathbb{A}_m$ by \Cref{L.100}, we deduce from \eqref{Sp.1} that $s(\mathbb{A}_m)=0$ is a pole of the resolvent of $\mathbb{A}_m$. Recalling that $(e^{t\mathbb{A}_m})_{t\ge 0}$ is a positive semigroup on the Banach lattice $X_{1,m}$, it follows from \cite[Theorem~8.14]{CHADP1987} that $\sigma(\mathbb{A}_m)\cap i\mathbb{R}$ is either reduced to $\{0\}$ or contains infinitely many elements. The latter being ruled out by \eqref{Sp.0}, we conclude that $\sigma(\mathbb{A}_m)\cap i\mathbb{R} = \{0\}$. Since all eigenvalues are isolated, this last property ensures that there is $\varepsilon_m>0$ such that \eqref{Sp.2} holds true. 

Finally, since $\mathrm{ker}(\mathbb{A}_m) = \mathrm{ker}(\mathbb{A}_m^2) = \mathbb{R}\psi_1$ by \Cref{L.100}, zero is a simple eigenvalue of $\mathbb{A}_m$ according to \cite[Section~IV.1.17]{EnNa2000}. 
\end{proof}

\begin{proof}[Proof of \Cref{T.2}]
Let $m>1$. From \Cref{L.100} we obtain the existence of a
unique nonnegative
\begin{equation*}
\psi_1\in \bigcap_{r\ge 1} \dom(\mathbb{A}_r)
\end{equation*} 
such that $M_1(\psi_1)=1$ and $\mathrm{ker}(\mathbb{A}_m) = \mathbb{R}\psi_1$. We next infer from \Cref{L.101} that zero is a dominant eigenvalue of $\mathbb{A}_m$ and a first-order pole of its resolvent with residue $P$, where $P\in\mathcal{L}(X_{1,m})$ denotes the spectral projection onto $\mathrm{ker}(\mathbb{A}_m)$ and is given by
\begin{equation}\label{x1}
Pf = \lim_{\lambda\to 0} \lambda (\lambda - \mathbb{A}_m)^{-1}f\,, \qquad f\in X_{1,m}\,, 
\end{equation}
see, e.g., \cite[Section~IV.1.17]{EnNa2000}. It then follows from  \cite[Corollary~V.3.3]{EnNa2000} that there are $N_m\ge 1$ and $\nu_m>0$ such that
\begin{equation}
\| e^{t\mathbb{A}_m} - P \|_{\mathcal{L}(X_{1,m})} \le N_m e^{-\nu_m t}\,, \qquad t\ge 0\,. \label{Co.1}
\end{equation}

It only remains to identify the spectral projection $P$. Introducing $g_\lambda := \lambda (\lambda - \mathbb{A}_m)^{-1}f$ for $f\in X_{1,m}$, we have
\begin{equation*}
\lambda f = \lambda g_\lambda - \mathbb{A}_m g_\lambda\,,
\end{equation*}
from which we readily deduce that $M_1(f) = M_1(g_\lambda)$. Therefore, \eqref{x1} implies $M_1(Pf) = M_1(f)$. Since $Pf\in \mathbb{R}\psi_1$ and $M_1(\psi_1)=1$, we conclude that 
$$ 
P f=M_1(f)\psi_1\,,\quad f\in X_{1,m}\,.
$$
Recalling \eqref{Co.1}, the above identity completes the proof of \Cref{T.2}.
\end{proof}

\section{Stationary Solutions Revisited}\label{sec.SM}

We now prove the existence of  a stationary solution to \eqref{FD.0} when the overall fragmentation rate $a$ may be bounded for large sizes but does not decay to zero. Specifically, we assume that $a$ satisfies \eqref{St.1}; that is,
\begin{equation*}
	\alpha := \frac{1}{2}\liminf_{x\to\infty} a(x) \in (0,\infty)\,. 
\end{equation*}

\begin{proof}[Proof of \Cref{P.3}]
As in \Cref{L.100}, the proof of the uniqueness assertion in \Cref{P.3} relies on the dissipativity properties of $\mathbb{A}_1$ in $X_1$ and can be shown exactly as in the proofs of \cite[Lemma~3.5]{EMRR2005} and \cite[Proposition~3]{Lau2004}, to which we refer. 
	
As for the existence assertion, we employ a compactness method. Let $n\ge 1$. We set $a_n(x) := a(x) +x/n$ for $x>0$ and, for $m\ge 1$, we denote the operators $B_m$ and $\mathbb{A}_m$ with $a_n$ instead of $a$ by $B_{m,n}$ and $\mathbb{A}_{m,n}$, respectively. Since $a_n(x)\to \infty$ as $x\to\infty$, we infer from \Cref{L.100} that there is a unique nonnegative
\begin{equation*}
	\psi_{1,n}\in \bigcap_{r\ge 1} \dom(\mathbb{A}_{r,n})
\end{equation*} 
such that $M_1(\psi_{1,n})=1$ and $\mathrm{ker}(\mathbb{A}_{m,n}) = \mathbb{R}\psi_{1,n}$ for all $m\ge 1$. In particular, given $m>3$, the function $\psi_{1,n}$ belongs to $\dom(A_{0,m})$ with $a_n \psi_{1,n}\in X_{1,m}$ and solves 
\begin{equation}
	- \psi_{1,n}'' + a_n \psi_{1,n} = B_{m,n}\psi_{1,n} \;\text{ in }\; (0,\infty)\,, \qquad \psi_{1,n}(0)=0\,. \label{St.2}
\end{equation}
It follows from \eqref{St.2}, \Cref{L.B.1.5}, and Young's inequality that, for $\varepsilon>0$, 
\begin{align*}
	M_m(a_n \psi_{1,n}) & = M_m(B_{m,n}\psi_{1,n}) - m \int_0^\infty x^{m-1} \psi_{1,n}'(x)\ \mathrm{d}x \\
	& \le (1-\delta_m) M_m(a_n \psi_{1,n}) + m(m-1) M_{m-2}(\psi_{1,n}) \\
	& \le (1-\delta_m) M_m(a_n \psi_{1,n}) + m(m-3) \varepsilon M_{m}(\psi_{1,n}) + 2m \varepsilon^{(3-m)/2} M_1(\psi_{1,n})\,. 
\end{align*}
Hence,
\begin{equation}
	\delta_m M_m(a_n \psi_{1,n}) \le m(m-3) \varepsilon M_{m}(\psi_{1,n}) + 2m \varepsilon^{(3-m)/2}\,. \label{St.3}
\end{equation}
Owing to \eqref{St.1}, there is $x_*>0$ such that
\begin{equation}
	a(x) \ge \alpha\,, \qquad x\ge x_*\,. \label{St.4}
\end{equation}
In view of \eqref{St.3} and \eqref{St.4}, we obtain
\begin{align*}
	\alpha \delta_m M_m(\psi_{1,n}) & \le \alpha\delta_m x_*^{m-1} \int_0^{x_*} x \psi_{1,n}(x)\ \mathrm{d}x + \delta_m \int_{x_*}^\infty x^m a_n(x) \psi_{1,n}(x)\ \mathrm{d}x \\
	& \le \alpha\delta_m x_*^{m-1} M_1(\psi_{1,n}) + m(m-3) \varepsilon M_{m}(\psi_{1,n}) + 2m \varepsilon^{(3-m)/2}\,.
\end{align*}
Choosing $\varepsilon=\alpha\delta_m/2m(m-3)$ in the above inequality gives
\begin{equation*}
	\frac{\alpha \delta_m}{2} M_m(\psi_{1,n}) \le \alpha\delta_m x_*^{m-1} + 2m \left( \frac{\alpha\delta_m}{2m(m-3)} \right)^{(3-m)/2}\,.
\end{equation*}
Therefore, there is a positive constant $c_1(m)$ depending only on $a$ and $m$ such that
\begin{equation}
	M_m(\psi_{1,n}) \le c_1(m)\,, \qquad n\ge 1\,. \label{St.5}
\end{equation}
Several additional estimates can now be derived from \eqref{St.5}. Indeed, it readily follows from \eqref{St.2}, \eqref{St.3} (with $\varepsilon=1$), and \Cref{L.B.1.5} that, for $n\ge 1$, 
\begin{align}
	\|\psi_{1,n}''\|_{X_m} + \|a_n \psi_{1,n}\|_{X_m} + \| B_{m,n}\psi_{1,n}\|_{X_m} & \le 2 \left( \|a_n \psi_{1,n}\|_{X_m} + \| B_{m,n}\psi_{1,n}\|_{X_m} \right) \nonumber \\
	& \le 2(2-\delta_m) \|a_n \psi_{1,n}\|_{X_m} \le 4 M_m(a_n \psi_{1,n}) \nonumber \\
	& \le  \frac{4m(m-3)c_1(m)+8m}{\delta_m} =:c_2(m)\,. \label{St.6}
\end{align}
Similarly, by \eqref{A.0}, \eqref{St.2}, \eqref{St.5} (with $m=4$), and \Cref{L.B.1},
\begin{align}
	& \|\psi_{1,n}''\|_{X_1} + \|a_n \psi_{1,n}\|_{X_1} + \| B_{m,n}\psi_{1,n}\|_{X_1} \nonumber\\ 
	& \qquad\qquad \le 2 \left( \|a_n \psi_{1,n}\|_{X_1} + \| B_{m,n}\psi_{1,n}\|_{X_1} \right) \le 4 \|a_n \psi_{1,n}\|_{X_m} \nonumber \\
	&  \qquad\qquad \le 4 \left( 1 + \|a\|_{L_\infty(0,1)} \right) \int_0^1 x \psi_{1,n}(x)\ \mathrm{d}x + 4 \int_1^\infty x^4 a_n(x) \psi_{1,n}(x)\ \mathrm{d}x \nonumber \\
	& \qquad\qquad \le 4 \left( 1 + \|a\|_{L_\infty(0,1)} \right) + 4 c_2(4)=:c_3 \label{St.7}
\end{align}
for $n\ge 1$. Moreover, a straightforward consequence of \eqref{St.6}, \eqref{St.7}, and H\"older's inequality is that \eqref{St.6} is also true for $m\in (1,3]$ with a suitable constant $c_2(m)$.

We next claim that $(\psi_{1,n})_{n\ge 1}$ is relatively compact in $X_{1,m}$ for any $m\ge 1$. To this end, we note that, thanks to \eqref{St.6}, \eqref{St.7}, \Cref{L.2.1}, and Fubini's theorem,
\begin{equation*}
	\|\psi_{1,n}\|_{X_0} \le \frac{1}{\sqrt{2}} \|\psi_{1,n}''\|_{X_1}^{1/2} \, \|\psi_{1,n}\|_{X_1}^{1/2} \le \sqrt{\frac{c_3}{2}}
\end{equation*}
and
\begin{equation*}
	\|\psi_{1,n}\|_{X_1} + \|\psi_{1,n}'\|_{X_1} \le 1 + \int_0^\infty x \int_x^\infty |\psi_{1,n}''(y)|\ \mathrm{d}y\mathrm{d}x \le 1 + \frac{\|\psi_{1,n}''\|_{X_2}}{2} \le 1+c_2(2)
\end{equation*}
for $n\ge 1$. Now, let $m\ge 2$. In view of \eqref{St.6} and the above estimates, $(\psi_{1,n})_{n\ge 1}$ is a bounded sequence in $X_0\cap X_m\cap W_1^1((0,\infty),x\mathrm{d}x)$ and it follows from \cite[Proposition~7.2.2]{BLL2020b} that $(\psi_{1,n})_{n\ge 1}$ is relatively compact in $X_r$ for any $r\in (0,m)$. As $m\ge 2$ is arbitrary, we conclude that there are $\psi_1\in \bigcap_{r>0} X_r$ and a subsequence $(\psi_{1,n_j})_{j\ge 1}$ of $(\psi_{1,n})_{n\ge 1}$ such that
\begin{equation}
	\lim_{j\to\infty} \|\psi_{1,n_j} - \psi_1\|_{X_r} = 0 \quad\text{ for all }\; r>0\,. \label{St.8}
\end{equation}
An immediate consequence of \eqref{St.8} and the properties of $(\psi_{1,n})_{n\ge 1}$ is that
\begin{equation*}
	\psi_1 \in X_1^+ \;\;\text{ and }\;\; M_1(\psi_1)=1\,.
\end{equation*}
Finally, let $m\ge 1$. We observe that $\dom(\mathbb{A}_{m,n})\subset \dom(\mathbb{A}_m)$ for $n\ge 1$ (as $a_n\ge a$) and that \eqref{St.2} also reads
\begin{equation}
	\mathbb{A}_m \psi_{1,n} = \mathcal{R}_n\,, \label{St.9}
\end{equation}
where
\begin{equation*}
	\mathcal{R}_n(x) := - \frac{x}{n} \psi_{1,n}(x) + \frac{1}{n} \int_x^\infty y b(x,y) \psi_{1,n}(y)\ \mathrm{d}y\,, \qquad x>0\,. 
\end{equation*}
It readily follows from \eqref{St.6} that
\begin{equation*}
	\|\mathcal{R}_n \|_{X_{1,m}} \le \frac{2}{n} \left( \|\psi_{1,n}\|_{X_2} + \|\psi_{1,n}\|_{X_{m+1}} \right) \le \frac{2}{n} \left( c_2(2) + c_2(m+1) \right) \,,
\end{equation*}
so that
\begin{equation}
	\lim_{n\to\infty} \|\mathcal{R}_n \|_{X_{1,m}} =0\,. \label{St.10}
\end{equation}
In view of \eqref{St.9} and \eqref{St.10}, the sequence $(\mathbb{A}_m \psi_{1,n_j})_{j\ge 1}$ converges to zero in $X_{1,m}$ as $j\to\infty$ and, since $\mathbb{A}_m$ is closed on $X_{1,m}$, we readily deduce from \eqref{St.8} that $\psi_1\in \dom(\mathbb{A}_m)$ and $\mathbb{A}_m\psi_1=0$.
\end{proof}

\section*{Acknowledgments}

This work was done while PhL enjoyed the kind hospitality of the Institut f\"ur Angewandte Mathematik, Leibniz Universit\"at Hannover.	

\bibliographystyle{siam}
\bibliography{FragmDiff}
	
\end{document}